\newtheorem{lemma}{Lemma}[section]
\newtheorem{definition}{Definition}[section]
\newtheorem{theorem}{Theorem}[section]
\newtheorem{remark}{Remark}[section]
\newcommand{\continuation}{??}
\newtheorem{example}{Example}[section]
\newtheorem{assumption}{Assumption}[section]
\newtheorem*{acknow*}{Acknowledgments}
\newcommand{\ignore}[1]{}
\newcommand\numberthis{\addtocounter{equation}{1}\tag{\theequation}}
\newcommand\Label[1]{&\refstepcounter{equation}(\theequation)\ltx@label{#1}&}
\newcommand{\R}{\mathbb{R}}
\newcommand{\norm}[1]{\left\lVert #1 \right\rVert}
\newcommand{\abs}[1]{\left\vert #1 \right\rvert}
\newcommand{\bxi}{\bar{\xi}}
\newcommand{\vmu}{\bm{\mu}}
\newcommand{\vxi}{\bm{\xi}}
\newcommand{\vxibar}{\bar{\bm{\xi}}}
\newcommand{\vrho}{\bm{\rho}}
\newcommand{\fsum}[1]{\Gamma^{\ell,j}[#1](\bm{x},\bm{y})}
\author{M. Heldman$^{1,2}$}
\address{$^1$ Department of Mathematics, Virginia Tech, Blacksburg, VA 24061}
\address{$^2$ Department of Mathematics and Statistics, Boston University, Boston, MA 02215}
\email[Max Heldman]{maxh@vt.edu}
\author{S. A. Isaacson$^2$}
\email[Samuel A. Isaacson]{isaacsas@bu.edu}
\author{Q. Liu$^2$}
\email[Qianhan Liu]{liuq19@bu.edu}
\author{K. Spiliopoulos$^2$}
\email[Konstantinos Spiliopoulos]{kspiliop@bu.edu}
\thanks{S. A. I., M. H., and Q. L. were supported by ARO W911NF-20-1-0244 and National Science Foundation DMS-1902854. K.S. was partially supported by the National Science Foundation DMS-2107856, DMS-2311500, SIMONS Foundation Award 672441 and ARO W911NF-20-1-0244. }
\title[Mean field limit for the PBSRDD model with Potential Interactions]{Mean field limits of particle-based stochastic reaction-drift-diffusion models}
\date{\today}
\begin{document}
\begin{abstract}We consider particle-based stochastic reaction-drift-diffusion models where particles move via diffusion and drift induced by one- and two-body potential interactions.  The dynamics of the particles are formulated as measure-valued stochastic processes (MVSPs), which describe the evolution of the singular, stochastic concentration fields of each chemical species. The mean field large population limit of such models is derived and proven, giving coarse-grained deterministic partial integro-differential equations (PIDEs) for the limiting deterministic concentration fields' dynamics. We generalize previous studies on the mean field limit of models involving only diffusive motion, with care to formulating the MVSP representation to ensure detailed balance of reversible reactions in the presence of potentials. Our work illustrates the more general set of PIDEs that arise in the mean field limit, demonstrating that the limiting macroscopic reactive interaction terms for reversible reactions obtain additional nonlinear concentration-dependent coefficients compared to the purely diffusive case. Numerical studies are presented which illustrate that two-body repulsive potential interactions can have a significant impact on the reaction dynamics, and also demonstrate the empirical numerical convergence of solutions to the PBSRDD model to the derived mean field PIDEs as the population size increases.
\end{abstract}

	\maketitle
\section{Introduction}

We consider particle-based stochastic reaction-drift-diffusion (PBSRDD) models where particles move via diffusion and drift induced by one- and two-body potential interactions.  We formulate the dynamics of the particles as measure-valued stochastic processes (MVSPs), which describe the stochastic evolution of the concentration fields of each chemical species as a sum of $\delta-$functions encoding the position and type of each particle. Our goal is to formulate an appropriate MVSP model for such systems, and then rigorously investigate the large population limit of the MVSP dynamics, deriving partial-integral differential equations (PIDEs) that represent the limiting mean-field dynamics.

Particle-based stochastic reaction-diffusion (PBSRD) models have a long history of use in modeling the diffusion of, and reactions between, individual molecules. PBSRDD models are more macroscopic descriptions than millisecond-timescale quantum mechanical or molecular dynamics models of a few molecules \cite{ShawAntonMS2009}, but more microscopic descriptions than deterministic 3D reaction-diffusion PDEs for the average concentration field of each chemical species. One of the most popular PBSRDD models for studying biological processes is the volume reactivity (VR) model of Doi~\cite{TeramotoDoiModel1967,DoiSecondQuantA,DoiSecondQuantB}. In the Doi model, the positions of individual molecules are typically represented as points undergoing Brownian motion. Bimolecular reactions between two substrate molecules occur with a probability per unit time based on their current positions~\cite{DoiSecondQuantA,DoiSecondQuantB}. Unimolecular reactions are typically assumed to represent internal processes, and as such are modeled as occurring with exponentially distributed times based on a specified reaction-rate constant.

In our prior work \cite{IsaacsonMa2022}, we investigated the large population limit of PBSRD models (i.e. in the absence of drift). Allowing drift makes the models more relevant for applications, but complicates the model formulation and the analysis needed to prove the mean field limit. Many models of biological systems involve drift induced by background potential fields and/or by potential interactions. In modeling cellular processes, the former has been used to model how volume exclusion by DNA fibers impedes protein diffusion in the nucleus~\cite{IsaacsonPNAS2011,IsaacsonBmB2013}, and to model membrane-bound protein motion induced by actin contraction in T cell synapses~\cite{DustinHermann2018dw}. Two-body potential interaction fields have been used to more accurately account for attractive or repulsive interactions between molecules~\cite{ReaddyV219}, including to model volume exclusion due to the physical size of molecules~\cite{Bruna2017SIAP,DustinHermann2018dw}. More generally, such interactions can arise in diverse classes of agent-based models including models for the spread of infections or innovations within populations~\cite{Schutte2019}, or for interactions between cells. In addition, as our numerical studies in Section \ref{S:Simulations} demonstrate, the inclusion of potential interactions can have non-trivial effects on the behavior of the system. We therefore now investigate the large population limit PBSRDD models where particles move via diffusion and drift term induced by one-body and two-body potentials.

For simplicity, we will work in free space as in~\cite{IsaacsonMa2022,IMS21a}, and as such, we will study the large population mean field limit via an increasing scaling parameter, $\gamma$, which can physically represent Avogadro's number. The ``large system size" limit $\gamma\rightarrow\infty$ is where one typically obtains more macroscopic coarse-grained partial-integral differential equation (PIDE), PDE, SPIDE, or SPDE models for biological systems that model diffusion and reaction via the evolution of continuous concentration fields \cite{HeldmanSPA24,IsaacsonMa2022,Arnold1980dl,Nolen2019,Schutte2019,Oelschlager89a,VeberAnnals2023}. To determine the limit for PBSRDD models we adopt the classical Stroock-Varadhan Martingale approach \cite{EthierKurtz1986,StroockVaradhan2006} that previously allowed us to rigorously identify and prove the large population limit of the MVSP representation for PBSRDD models~\cite{IsaacsonMa2022}. We note that this method has been successful in many instances to study large population dynamics and general interacting particle systems, see \cite{TypicalDefaults,LargePortfolio,DaiPra2,DaiPra3,Delarue,Inglis,Moynot,Sompolinsky}. We identify a new macroscopic system of partial integro-differential equations (PIDEs) whose solution corresponds to the large population limit of the MVSP, and we rigorously prove the convergence (in a weak sense) of the MVSP to this solution.

We also note here that the bottom-up approach that we take in this paper allows us to derive the new macroscopic PIDEs corresponding to the true population limit of the underlying spatial PBSRDD model. This is to be contrasted with the well-known macroscopic reaction-drift-diffusion PDE models of chemical reactions at a cellular scale, which are often derived by modifying standard ODE models for non-spatial reaction systems via the addition of drift-diffusion operators to give a (phenomenological) spatial model. Note, in the absence of potential interactions (i.e., particles move only via diffusion), in  \cite{IMS21a} we proved that such models can be seen as limits of the rigorous mean field limit we derived in \cite{IsaacsonMa2022} when bimolecular reaction kernels are short-range and averaging.

For our model, consider $\Gamma_j$ to be the set of indices of particles of species $j$. In the absence of reactions, suppose that the $i$th particle is of species $j$ and located at position $Q_t^i$ at time $t$. The $i$th particle then moves according to
\begin{equation}  \label{Eq:ModelMotionFreeSpace}
\begin{aligned}
d Q_t^i &= -\biggl[\nabla v_{j}(Q_t^i) + \frac{1}{\gamma} \nabla \sum_{j'=1, j' \neq j}^J \sum_{k\in \Gamma_{j'}} u_{j,j'} \bigl( \|Q_t^i - Q_t^k\|\bigr)+ \frac{1}{\gamma} \nabla \sum_{k\in \Gamma_{j},k \neq i} u_{j,j} \bigl( \|Q_t^i - Q_t^k\|\bigr)\biggr] d t \\
&\qquad + \sqrt{2D^j} d W_t^i.
\end{aligned}
\end{equation}
Here $v_{j}(x)$ denotes the one-body potential experienced by a particle of type $j$ when at position $x$, and $u_{j,j'}(\|x-y\|) / \gamma$ the two-body potential between particles at locations $x$ and $y$ of types $j$ and $j'$ respectively. $N^{\zeta}(t)$ denotes the total number of particles at time $t$, and $\zeta=(\frac{1}{\gamma}, \eta)$ is a vector consisting of the mean field limit scaling parameter, $\gamma$, and a displacement range parameter, $\eta$. In the large population mean field limit, $\gamma$ plays the role of a system size (e.g., Avogadro's number, or in bounded domains the product of Avogadro's number and the domain volume)\cite{AndersonKurtz2015}. $\eta$ represents a regularization parameter we introduce to rigorously handle $\delta$-function placement densities for reaction products, which are commonly used in many PBSRDD models, see Section \ref{S:MainResult}.  Finally, $D^j$ represents the diffusion coefficient for a particle of species $j$, and $\{W_{t}^{i}\}_{i \in \mathbb{N}_{+}}$ is a countable collection of standard independent Brownian motions in $\mathbb{R}^{d}$.

From~\eqref{Eq:ModelMotionFreeSpace}, we see each particle follows the gradient of a potential landscape, the so-called suitability landscape, and also experiences an additional force derived from the two-body potentials. The scaling $\frac{1}{\gamma}$ in front of the pair-wise potentials is the mean field scaling, which, at least formally, preserves the total strength of the interaction at order one so that we have a well-defined large population mean field limit in the absence of reactions, see~\cite{JabinMeanfield2017}.

From a mathematical point of view, adding drift due to potential interactions makes the rigorous formulation of the particle model more complicated, and gives rise to a number of new terms that need to be appropriately handled for a coarse-grained mean field limit to exist. Specifically, in moving from PBSRD (purely-diffusive) to PBSRDD models (drift due to potential interactions) reactive interaction functions, which determine the probability per time substrates react and produce products at specified positions, may require modification. Such modifications ensure detailed balance of reaction fluxes, i.e., the statistical mechanical property that the pointwise forward and backward reaction fluxes should balance at equilibrium~\cite{IsaacsonZhangDB2022,FrohnerNoe2018,IsaacsonDriftDB}. When formulating PBSRDD reactive interaction terms to be consistent with detailed balance at equilibrium, the needed modifications result in a dependence on the full potential of the system (hence adding a dependence on the positions of non-reactant particles). We specifically formulate these modifications as an extra rejection/acceptance probability appearing within reactive interaction functions, following the approach proposed in~\cite{FrohnerNoe2018}, but note our results should be straightforward to adapt should one instead modify reaction rates or product placement mechanisms to ensure detailed balance. One of the contributions of this work is that we make all of these modifications precise in a general fashion via the MVSP formulation. We illustrate our results for a specific, but important, choice of rejection/acceptance probabilities based on the functional choices proposed by Fr\"{o}hner and No\'e~\cite{FrohnerNoe2018}. The modified reactive interaction functions we employ then give rise to additional nonlinear concentration-dependent coefficients within the limiting reactive terms of the resulting mean field PIDEs, giving rise to a new deterministic, macroscopic reaction-drift-diffusion model.


To illustrate the main result of this paper, consider the reversible $A+B \rightleftarrows C$ reaction as an example. Denote the forward  $A+B\rightarrow C$ reaction by $\mathcal{R}_1$ and the backward  $C \rightarrow A+B$ reaction by $\mathcal{R}_2$. Let us represent by $A(t)$ the stochastic process for the number of species A molecules at time $t$, and label the position of $i$th molecule of species A at time $t$ by the stochastic process $\bm{Q}_{i}^{A(t)} \subset \mathcal{R}^{d}$. The quantity
$$
A^{\gamma}(x, t)=\frac{1}{\gamma} \sum_{i=1}^{A(t)} \delta\bigl(x-Q_{i}^{A(t)}\bigr)
$$
corresponds to the stochastic, singular molar concentration field of species A at point $x$ at time $t$. We define $B^{\gamma}(x, t)$ and $C^{\gamma}(x, t)$ in a similar fashion, and let $\mathbf{S}^{\gamma}(x,t) = (A^{\gamma}(x,t),B^{\gamma}(x,t),C^{\gamma}(x,t))$. For each species we assume a spatially-constant diffusivity, $D^{\mathrm{A}}, D^{\mathrm{B}}$ and $D^{\mathrm{C}}$, respectively.

For $\mathcal{R}_1$, let $K^{\gamma}_{1}(x, y)$ denote the probability per time that one A molecule at $x$ and one $\mathrm{B}$ molecule at $y$ react. The product $C$ molecule is then placed at $z$ following the probability density $m^{\eta}_{1}(z | x, y)$, conditioning on substrates at $x$ and $y$. Here the displacement range parameter $\eta$ represents a mollification parameter for singular $\delta$-function placement densities. We further incorporate a rejection/acceptance mechanism via an acceptance probability $\pi^{\gamma}_1(z|x,y,\bm{q})$, which indicates the probability that the above reaction is accepted, i.e. alllowed to occur, and generates a product at $z$ given the positions of the substrates at $x$ and $y$, and of the non-substrate and non-product particles molecules at $\bm{q}$. We define $K_{2}^{\gamma}(z), m^{\eta}_{2}(x, y | z)$ and $\pi^{\gamma}_2(x,y|z, \bm{q})$ analogously for the reverse reaction $\mathcal{R}_2$. Finally, we assume that the acceptance probabilities can be equivalently rewritten as a function of the substrate positions, product positions, and the pre-reaction concentration fields, $\pi^{\gamma}_1\bigl(z|x,y, \mathbf{S}^{\gamma}(x',t)dx'\bigr)$ and $\pi^{\gamma}_2\bigl(x,y|z, \mathbf{S}^{\gamma}(x',t)dx'\bigr)$. In Section~\ref{S:FrohnerNoeModel} we illustrate that this assumption holds for the specific choices of rejection/acceptance probabilities proposed in~\cite{FrohnerNoe2018}.

Note that $K_1^{\gamma}, m_1^{\eta}$ and $\pi_1^{\gamma}$ all depend on $\zeta=(\frac{1}{\gamma}, \eta)$. In the large population limit that $\gamma \rightarrow \infty$ and $\eta \rightarrow 0$ jointly, denoted as $\zeta \rightarrow 0$, let $K_1, m_1$ and $\pi_1$ denote their respective (possibly rescaled) limits. We will specify our assumptions on the mean field limits and $\zeta$ scalings of $K_1^{\gamma}, m_1^{\eta}$, $\pi_1^{\gamma}$, $K_2^{\gamma}, m_2^{\eta}$ and $\pi_2^{\gamma}$, as well as specific functional examples for them, in Sections~\ref{S:NotationPrelim}, \ref{S:MainAssumptions} and ~\ref{S:FrohnerNoeModel}.

In this work, we derive the large population (thermodynamic) limit and prove, in a weak sense, that as $\zeta \rightarrow 0$,
$$
\bigl(A^{\gamma}(x, t), B^{\gamma}(y, t), C^{\gamma}(z, t)\bigr) \rightarrow \bigl(A(x, t), B(y, t), C(z, t)\bigr),
$$
with $\mathbf{S}(x,t) = (A(x,t),B(x,t),C(x,t))$ representing the limiting deterministic mean-field molar concentration fields. The latter satisfy the system of reaction-drift-diffusion PIDEs that
\begin{align} \label{Eq:ThreeSpeciesReversibleExampleLimit}
\partial_{t} &A(x, t) =D^{A} \Delta_{x} A(x, t)+ \nabla_x \cdot \bigl(\nabla_x v_1 (x) A(x,t)\bigr) \notag \\
&+ \nabla_x \cdot \biggl(A(x,t) \int_{\mathbb{R}^d} \bigl( \nabla u_{1,1}(\norm{x-y})A(y,t) + \nabla u_{1,2}(\norm{x-y})B(y,t) + \nabla u_{1,3}(\norm{x-y})C(y,t)\bigr) dy\biggr)\notag\\
&-\biggl(\int_{\mathbb{R}^{d}} K_{1}(x, y) \biggl(\int_{\mathbb{R}^{d}}m_1(z|x,y)\pi_1\bigl(z|x,y, \mathbf{S}(x',t)dx'\bigr)dz\biggr) B(y, t) d y\biggr) A(x, t)\notag\\
&+\int_{\mathbb{R}^{d}} K_{2}(z)\biggl(\int_{\mathbb{R}^{d}} m_{2}(x, y | z)\pi_{2}\bigl(x,y|z, \mathbf{S}(x',t)dx'\bigr) d y\biggr) C(z, t) d z \notag\\
\partial_{t} &B(y, t)=D^{B} \Delta_{y} B(y, t)+ \nabla_y \cdot \bigl(\nabla_y v_2 (y) B(y,t)\bigr)\notag\\
&+ \nabla_y \cdot \biggl(B(y,t) \int_{\mathbb{R}^d} \bigl(\nabla u_{2,1}(\norm{y-x})A(x,t) + \nabla u_{2,2}(\norm{y-x})B(x,t) + \nabla u_{2,3}(\norm{y-x})C(x,t)\bigr)dx\biggr)\notag\\
&-\biggl(\int_{\mathbb{R}^{d}} K_{1}(x, y) \biggl(\int_{\mathbb{R}^{d}}m_{1}(z| x,y)\pi_{1}\bigl(z|x,y, \mathbf{S}(x',t)dx'\bigr)dz\biggr)A(x, t) d x \biggr)B(y, t)\notag\\
&+\int_{\mathbb{R}^{d}} K_{2}(z)\biggl(\int_{\mathbb{R}^{d}} m_{2}(x, y | z)\pi_{2}\bigl(x,y|z, \mathbf{S}(x',t)dx'\bigr) d x\biggr) C(z, t) d z  \notag\\
\partial_{t} &C(z, t)=D^{C} \Delta_{z} C(z, t)+ \nabla_z \cdot \bigl(\nabla_z v_3 (z) C(z,t)\bigr)\notag\\
&+ \nabla_z \cdot \biggl(C(z,t) \int_{\mathbb{R}^d} \bigl( \nabla u_{3,1}(\norm{z-x})A(x,t) + \nabla u_{3,2}(\norm{z-x})B(x,t)+ \nabla u_{3,3}(\norm{z-x})C(x,t)\bigr)dx\biggr)\notag\\
&+\int_{\mathbb{R}^{d} \times \mathbb{R}^{d}} K_{1}(x, y) m_{1}(z | x, y) \pi_1\bigl(z|x,y, \mathbf{S}(x',t)dx'\bigr)A(x, t) B(y, t) d x d y\notag\\
&-K_{2}(z)\biggl(\int_{\mathbb{R}^{d} \times \mathbb{R}^{d}} m_{2}(x, y|z) \pi_2\bigl(x, y|z, \mathbf{S}(x',t)dx'\bigr)dxdy\biggr)C(z, t).\notag \\
\end{align}

The paper is organized as follows. In Section \ref{S:NotationPrelim}, we go over the notation and definitions that are used throughout this work. We then present the stochastic equation for the PBSRDD model, which describes the evolution of the empirical measure (MVSP) of the chemical species in path space, in Section \ref{S:GeneratorProcessLevelDescription}. In Section \ref{S:MainAssumptions}, we summarize the basic assumptions we make about the form of the reaction rate functions, product placement densities, and acceptance probabilities.
In Section \ref{S:MainResult}, we present our main result on the mean field limit, Theorem \ref{T:MainTheorem}, which describes the evolution equation satisfied by the empirical measures for the molar concentration of each species in the large population limit for general reaction networks. As illustrative examples, we also present the mean field limits for specific chemical systems. In Section \ref{S:FrohnerNoeModel}, we discuss a specific formulation of the acceptance probability in the generalized Fr\"{o}hner-No\'e model \cite{FrohnerNoe2018}, and examine its large-population limit. We also examine a number of reversible reactions and give the corresponding acceptance probabilities. In Section \ref{S:Simulations}, we present numerical studies illustrating the empirical convergence of numerical solutions to the PBSRDD model to the derived mean field PIDEs as $\zeta \rightarrow 0$. Our numerical examples also demonstrate that the potential interactions can have significant impacts on the statistical and dynamical behavior of the system. Finally, in Section \ref{S:ProofMainTheorem}, we give the proof of Theorem \ref{T:MainTheorem}. Appendix \ref{S:Appendix} includes the verification that the Fr\"{o}hner-No\'e type of acceptance probabilities satisfy the relevant assumptions made in Section \ref{S:MainAssumptions}, and provides the proof of a mollification-type result used in Section~\ref{S:ProofMainTheorem}.

\section{Notations and preliminary definitions}\label{S:NotationPrelim}

We consider a collection of particles with $J$ different types, and for the rest of the paper, we will interchangeably use the terms particle or molecule and type or species. Let $\mathcal{S}=\{S_{1}, \cdots, S_{J}\}$ denote the set of different possible types, with $p_{i} \in \mathcal{S}$ the value of the species of the $i$th particle. We also assume an underlying probability triple, $(\Omega, \mathcal{F}, \mathbb{P})$, on which all random variables are defined.

In our model, molecules diffuse in space $\mathbb{R}^{d}$ subject to drift arising from potential interactions, and can undergo $L$ possible types of reactions, denoted as $\mathcal{R}_{1}, \cdots, \mathcal{R}_{L}$. We use non-negative integer stoichiometric coefficients $\{\alpha_{\ell j}\}_{j=1}^{J}$ and $\{\beta_{\ell j}\}_{j=1}^{J}$ to describe the $\mathcal{R}_{\ell}$th reaction, $\ell \in\{1, \ldots, L\}$, as
$$
\sum_{j=1}^{J} \alpha_{\ell j} S_{j} \rightarrow \sum_{j=1}^{J} \beta_{\ell j} S_{j},
$$
and the multi-index vectors $\bm{\alpha}{ }^{(\ell)}=$ $\bigl(\alpha_{\ell 1}, \alpha_{\ell 2}, \cdots, \alpha_{\ell J}\bigr)$ and $\bm{\beta}^{(\ell)}=\bigl(\beta_{\ell 1}, \beta_{\ell 2}, \cdots, \beta_{\ell J}\bigr)$ to collect the coefficients of the $\ell$th reaction. We denote the substrate and product orders of the reaction by $|\bm{\alpha}^{(\ell)}| \doteq \sum_{i=1}^{J} \alpha_{\ell i} \leq 2$ and $|\bm{\beta}^{(\ell)}| \doteq \sum_{j=1}^{J} \beta_{\ell j} \leq 2$. The implicit assumption that all reactions are at most second order is justified by the assumption that the probability that three substrates in a dilute system simultaneously have the proper configuration and energy levels to react is small. It is further justified since reactions of order three and above are often considered to be approximations of sequences of bimolecular reactions in biological models. For subsequent notational purposes, we label the reactions such that the first $\tilde{L}$ reactions correspond to those that have no products, i.e., annihilation reactions of the form
$$
\sum_{j=1}^{J} \alpha_{\ell j} S_{j} \rightarrow \emptyset
$$
for $\ell \in\{1, \ldots, \tilde{L}\}$. We assume that the remaining $L-\tilde{L}$ reactions have one or more product particles.

Let $D^{i}$ label the diffusion coefficient for the $i$th particle, taking values in $\{D_{1}, \ldots, D_{J}\}$, where $D_{j}$ is the diffusion coefficient for species $S_{j}, j=1, \cdots, J$.
We assume drift is imparted to particles via one- and two-body potentials. Let $v_j(x)$ denote a background potential that imparts drift to a particle of species $j$ located at $x$. Similarly, we let $u^{\gamma}_{j,j'}(\norm{x - y}) := u_{j,j'}(\norm{x - y}) / \gamma$ represent a two-body potential experienced between a particle of species $j$ at $x$ and a particle of species $j'$ at $y$. Note, the inverse $\gamma$ scaling here is the standard mean field scaling to keep the total strength of the interaction fixed as more particles are added in the mean field limit~\cite{JabinMeanfield2017}.
We denote by $Q_{t}^{i} \in \mathbb{R}^{d}$ the position of the $i$th particle, $i \in \mathbb{N}_{+}$, at time $t$. In the absence of reactions, the dynamics for $Q_{t}^{i}$ are governed by (\ref{Eq:ModelMotionFreeSpace}). A particle's state can be represented as a vector in $\hat{P}=\mathbb{R}^{d} \times \mathcal{S}$, the combined space encoding particle position and type. This state vector is subsequently denoted by $\hat{Q}_{t}^{i} \stackrel{\text { def }}{=}\bigl(Q_{t}^{i}, p_{i}\bigr)$.

We now formulate our representation for the (number) concentration, equivalently number density, fields of each species. Let $E$ be a complete metric space and $M(E)$ the collection of measures on $E$. \textcolor{black}{Let $\mathcal{M}(E)$ be the subset of $M(E)$ consisting of all finite, non-negative point measures of the form
\begin{align*}
\mathcal{M}(E)&=\left\{\sum_{i=1}^{N}\delta_{Q^{i}}, N\geq 1, Q^{1},\cdots, Q^{N}\in E\right\}.
\end{align*}
}For $f: E \mapsto \mathbb{R}$ and $\mu \in M(E)$, define
$$
\langle f, \mu\rangle_{E}=\int_{x \in E} f(x) \mu(d x) .
$$
We will frequently have $E=\mathbb{R}^{d}$, in which case we omit the subscript $E$ and simply write $\langle f, \mu\rangle$. For each $t \geq 0$, we define the concentration (i.e., number density) of particles in the system at time $t$ by the distribution
\begin{equation}
\nu_{t}=\sum_{i=1}^{N(t)} \delta_{\hat{Q}_{t}^{i}}=\sum_{i=1}^{N(t)} \delta_{Q_{t}^{i}} \delta_{p_{i}}\label{ncen}
\end{equation}
where borrowing notation from \cite{Bansaye2015}, $N(t)=\langle 1, \nu_{t}\rangle_{\hat{P}}$ represents the stochastic process for the total number of particles at time $t$. To investigate the behavior of different types of particles, we denote the marginal distribution on the $j$th type, i.e., the concentration field for species $j$, by
$$
\nu_{t}^{j}(\cdot)=\nu_{t}(\cdot \times\{S_{j}\})
$$
a distribution on $\mathbb{R}^{d}$. $N_{j}(t)=\langle 1, \nu_{t}^{j}\rangle$ will similarly label the total number of particles of type $S_{j}$ at time $t$. \textcolor{black}{Note that in the remainder, in any rigorous calculation $\nu_{t}$ and $\nu_{t}^{j}$ will be measures and treated as such. We will, however, abuse notation and also refer to them as concentration fields, i.e., number densities. Strictly speaking, the latter should refer to the densities associated with such measures, but we ignore this distinction in subsequent discussions.} For $\nu$ any fixed particle distribution of the form \eqref{ncen}, we will also use an alternative representation in terms of the marginal distributions \textcolor{black}{$\nu^j\in \mathcal{M}(\R^d)$ for particles of type $j$,
\begin{equation} \label{eq:densitymeasdefmargrep}
\nu = \sum_{j = 1}^{J} \nu^j\delta_{S_j}  \in \mathcal{M}(\hat{P}).
\end{equation}}

In considering the mean field large population limit, we will take a simultaneous limit in which the population scaling parameter $\gamma \to \infty$, and the (convenience) displacement range parameter $\eta \to 0$ (see Section~\ref{S:MainAssumptions} for the definition of the latter). As mentioned in the introduction, this dual limit is encoded via the vector limit parameter
\begin{equation*}
\zeta := \left(\frac{1}{\gamma}, \eta\right) \to 0.
\end{equation*}
In studying this limit we will work with rescaled measures for each species, denoted by
\begin{equation*}
    \mu_{t}^{\zeta, j} := \frac{1}{\gamma} \nu_{t}^{\zeta, j}, \quad j = 1,\dots,J.
\end{equation*}
When $\gamma$ corresponds to Avogadro's number, $\mu_{t}^{\zeta, j}$ physically corresponds to the measure for which the associated density would represent the molar concentration field for species $j$ at time $t$ (but we will again abuse notation and also refer to $\mu_t^{\zeta,j}$ as the molar concentration field). We similarly let
\begin{equation*}
    \mu_{t}^{\zeta} := \frac{1}{\gamma} \nu_{t}^{\zeta}=\sum_{j=1}^{J} \mu_{t}^{\zeta,j} \delta_{S_{j}},
\end{equation*}
and define the vector of the molar concentrations for each species by
\begin{equation*}
    \vmu^{\zeta}_t := (\mu^{\zeta,1}_t,\dots,\mu^{\zeta,J}_t).
\end{equation*}
In the remainder, we will often write $N^{\zeta}(t) = \langle 1, \gamma \mu_t^{\zeta} \rangle_{\hat{P}}$ and $N^{\zeta}_j(t) = \langle 1, \gamma \mu_t^{\zeta,j}\rangle$ to make explicit that $N$ and $N_j$ depend on $\zeta$.


In addition to having notations for representing particle concentration fields, we will also use state vectors to store the positions of particles of a given type. Define the particle index maps $\{\sigma_{j}(k)\}_{k=1}^{N_{j}(t)}$, which encode a fixed ordering for the positions of particles of species $j$, $Q^{\sigma_{j}(1)} \preceq \cdots \preceq Q^{\sigma_{j}(N_{j}(t))}$, arising from an (assumed) fixed underlying ordering on $\mathbb{R}^{d}$. Following the notation established in \cite{Bansaye2015} (see Section 6.3 therein), we let $\mathbb{N}^{*}=\mathbb{N}\setminus\{0\}$ and let $H=(H^{1},\cdots, H^{k},\cdots ):\mathcal{M}(\mathbb{R}^d)\mapsto \left(\mathbb{R}^{d}\right)^{\mathbb{N}^{*}}$ such that
\begin{equation}
    H\bigl(\nu_{t}^{j}\bigr) \coloneqq \bigl(Q_{t}^{\sigma_{j}(1)}, \cdots, Q_{t}^{\sigma_{j}(N_{j}(t))}, 0,0, \cdots\bigr).
\end{equation}
$H(\nu^j_t)$ represents the position state vector for type $j$ particles. We analogously let $H^{i}\bigl(\nu_{t}^{j}\bigr) \in \mathbb{R}^{d}$ label the $i$th entry of the vector $H\bigl(\nu_{t}^{j}\bigr)$. Note that the zero entries after the $Q_{t}^{\sigma_{j}(N_{j}(t))}$ term merely serve as placeholders. As commented in \cite{Bansaye2015}, this function $H$ allows us to address a notational issue. In particular, choosing a particle of a type $j$ uniformly among all particles in $\nu^j_t \in\mathcal{M}(\mathbb{R}^d)$ amounts to choosing an index uniformly in the set $\{1,\cdots,\langle 1,\nu_t^j \rangle \}$, and then choosing the individual particle from the arbitrary fixed ordering.

As particles of the same type are assumed indistinguishable, there is no ambiguity in the value of $H\bigl(\nu_{t}^{j}\bigr)$ in the case that two particles of type $j$ have the same position. Using this notation we will often write the marginal distribution of species $j$ as
\begin{equation} \label{mu}
    \mu_{t}^{\zeta, j}(d x)
    = \frac{1}{\gamma} \sum_{i=1}^{N^{\zeta}_j(t)} \delta_{H^{i}(\gamma \mu_{t}^{\zeta, j})}(d x)
    = \frac{1}{\gamma} \sum_{i=1}^{\gamma\langle 1, \mu_{t}^{\zeta, j}\rangle} \delta_{H^{i}(\gamma \mu_{t}^{\zeta, j})}(d x),
    \quad j \in\{1, \ldots, J\}.
\end{equation}

With the preceding definitions, and analogously to \cite{IsaacsonMa2022}, we introduce a system of notation to encode substrate and particle positions and configurations that are needed to later specify reaction processes.
\begin{definition}\label{Def:1}
To describe the dynamics of $\nu_t$, we will sample vectors containing the indices of the specific substrate particles participating in a single $\ell$-type reaction from the substrate index space
$$
\mathbb{I}^{(\ell)}=(\mathbb{N} \backslash\{0\})^{|\alpha^{(\ell)}|}.
$$
For the allowable reactions considered in this work, we label the elements of $\mathbb{I}^{(\ell)}$ according to their species types:
\begin{enumerate}
    \item For $\mathcal{R}_{\ell}$ of the form $\varnothing \rightarrow \cdots$
    $$
\mathbb{I}^{(\ell)}=\varnothing.
$$
    \item For $\mathcal{R}_{\ell}$ of the form $S_j \rightarrow \cdots$
    $$
\mathbb{I}^{(\ell)}=\{i_1^{(j)}\in\mathbb{N}\backslash\{0\}\}.
$$
    \item For $\mathcal{R}_{\ell}$ of the form $S_j + S_k \rightarrow \cdots$ with $j<k$
        $$
\mathbb{I}^{(\ell)}=\{(i_1^{(j)},i_1^{(k)})\in (\mathbb{N}\backslash\{0\})^2\}.
$$
    \item For $\mathcal{R}_{\ell}$ of the form $2S_j \rightarrow \cdots$
           $$
\mathbb{I}^{(\ell)}=\{(i_1^{(j)},i_2^{(j)})\in (\mathbb{N}\backslash\{0\})^2\}.
$$
\end{enumerate}
We write a particular sampled set of substrate indices $\bm{i}\in \mathbb{I}^{(\ell)}$ as
$$
\bm{i}=\bigl(i_{1}^{(1)}, \cdots, i_{\alpha_{\ell 1}}^{(1)}, \cdots, i_{1}^{(J)}, \cdots, i_{\alpha_{\ell J}}^{(J)}\bigr).
$$
\end{definition}

\begin{definition}\label{Def:2} We define the substrate particle position space analogously to $\mathbb{I}^{(\ell)}$ as
$$\mathbb{X}^{(\ell)} \in (\mathbb{R}^d)^{|\alpha^{(\ell)}|},
$$
with an element $\bm{x} \in \mathbb{X}^{(\ell)}$ represented by
$\bm{x}=\bigl(x_{1}^{(1)}, \cdots, x_{\alpha_{\ell 1}}^{(1)}, \cdots, x_{1}^{(J)}, \cdots, x_{\alpha_{\ell J}}^{(J)}\bigr)$. For $\bm{x} \in \mathbb{X}^{(\ell)}$, a sampled substrate position configuration for one individual $\mathcal{R}_{\ell}$ reaction, $x_{r}^{(j)}$ then labels the sampled position for the $r$th substrate particle of species $j$ involved in the reaction. Let $d \bm{x}=\bigl(\bigwedge_{j=1}^{J}(\bigwedge_{r=1}^{\alpha_{\ell j}} d x_{r}^{(j)})\bigr)$ be the corresponding volume form on $\mathbb{X}^{(\ell)}$ which also naturally defines an associated Lebesgue measure.
\end{definition}

\begin{definition}\label{Def:3}For reaction $\mathcal{R}_{\ell}$ with $\tilde{L}+1 \leq \ell \leq L$, i.e., having at least one product particle, define the product position space analogously to $\mathbb{X}^{(\ell)}$,
$$
\mathbb{Y}^{(\ell)}\in (\mathbb{R}^d)^{|\beta^{(\ell)}|},
$$
with an element $\bm{y} \in \mathbb{Y}^{(\ell)}$ written as
$\bm{x}=\bigl(y_{1}^{(1)}, \cdots, y_{\beta_{\ell 1}}^{(1)}, \cdots, y_{1}^{(J)}, \cdots, y_{\beta_{\ell J}}^{(J)}\bigr)$. For $\bm{y} \in \mathbb{Y}^{(\ell)}$ a sampled product position configuration for one individual $\mathcal{R}_{\ell}$ reaction, $y_{r}^{(j)}$ then labels the sampled position for the $r$th product particle of species $j$ involved in the reaction. Let $d \bm{y}=\bigl(\bigwedge_{j=1}^{J}(\bigwedge_{r=1}^{\beta_{\ell_{j}}} d y_{r}^{(j)})\bigr)$ be the corresponding volume form on $\mathbb{Y}^{(\ell)}$, which also naturally defines an associated Lebesgue measure.
\end{definition}

\begin{definition}\label{Def:4}Consider a fixed reaction $\mathcal{R}_{\ell}$, with $\bm{i}\in \mathbb{I}^{(\ell)}$ and $\nu$ corresponding to a fixed particle distribution given by \eqref{ncen} with representation \eqref{eq:densitymeasdefmargrep}. We define the $\ell$th projection mapping \textcolor{black}{$\mathcal{P}^{(\ell)} :   \mathcal{M}(\hat{P})\times  \mathbb{I}^{(\ell)} \rightarrow  \mathbb{X}^{(\ell)}$} as
$$
\mathcal{P}^{(\ell)}(\nu, \bm{i})=\bigl(H^{i_{1}^{(1)}}(\nu^{1}), \cdots, H^{i_{\alpha_{\ell 1}}^{(1)}}(\nu^{1}), \cdots, H^{i_{1}^{(J)}}(\nu^{J}), \cdots, H^{i_{\alpha_{\ell J}}^{(J)}}(\nu^{J})\bigr).
$$
When substrates with indices $\bm{i}$ in particle distribution $\nu$ are chosen to undergo a reaction of type $\ell, \mathcal{P}^{(\ell)}(\nu, \bm{i})$ then gives the vector of the corresponding substrate particles' positions. For simplicity of notation, in the remainder, we will sometimes evaluate $\mathcal{P}^{(\ell)}$ with inconsistent particle distributions and index vectors. In all of these cases the inconsistency will occur in terms that are zero, and hence not matter in any practical way.
\end{definition}

\begin{definition}\label{omega} Consider a fixed reaction $\mathcal{R}_{\ell}$, with $\nu$ a fixed particle distribution given by \eqref{ncen} with representation \eqref{eq:densitymeasdefmargrep}. Using the notation of Definition \ref{Def:1}, we define the allowable substrate index sampling space $\Omega^{(\ell)}(\nu) \subset \mathbb{I}^{(\ell)}$ as
$$
\Omega^{(\ell)}(\nu)= \begin{cases}\varnothing, & |\bm{\alpha}^{(\ell)}|=0, \\ \{\bm{i}=i_{1}^{(j)} \in \mathbb{I}^{(\ell)} | i_{1}^{(j)} \leq \langle 1, \nu^{j}\rangle\}, & |\bm{\alpha}^{(\ell)}|=\alpha_{\ell j}=1, \\ \{i=\bigl(i_{1}^{(j)}, i_{2}^{(j)}\bigr) \in \mathbb{I}^{(\ell)} | i_{1}^{(j)}<i_{2}^{(j)} \leq\langle 1, \nu^{j}\rangle\}, & |\bm{\alpha}^{(\ell)}|=\alpha_{\ell j}=2, \\ \{i=(i_{1}^{(j)}, i_{1}^{(k)}) \in \mathbb{I}^{(\ell)} | i_{1}^{(j)} \leq\langle 1, \nu^{j}\rangle, i_{1}^{(k)} \leq\langle 1, \nu^{k}\rangle\}, & |\bm{\alpha}^{(\ell)}|=2, \alpha_{\ell j}=\alpha_{\ell k}=1, j<k .\end{cases}
$$
Note that in the calculations that follow $\Omega^{(\ell)}(\nu)$ will change over time due to the fact that $\nu=\nu_{t}$ changes over time, but this will not be explicitly denoted for notational convenience.
\end{definition}

\begin{definition}\label{Def:6} Consider a fixed reaction $\mathcal{R}_{\ell}$, with $\nu$ any element of $M(\hat{P})$ with the representation \eqref{eq:densitymeasdefmargrep}. We define the $\ell$th substrate measure mapping $\lambda^{(\ell)}[\cdot]: M(\hat{P}) \rightarrow M(\mathbb{X}^{(\ell)})$ evaluated at $\bm{x} \in \mathbb{X}^{(\ell)}$ via $\lambda^{(\ell)}[\nu](dx)= \otimes_{j=1}^{J}\bigl(\otimes_{r=1}^{\alpha_{\ell j}} \nu^{j}(d x_{r}^{(j)})\bigr).$ \label{lambda}
\end{definition}

\begin{definition}\label{xtilde}For reaction $\mathcal{R}_{\ell}$, define a subspace $\tilde{\mathbb{X}}^{(\ell)} \subset \mathbb{X}^{(\ell)}$ by removing all particle substrate position vectors in $\mathbb{X}^{(\ell)}$ for which two particles of the same species have the same position. That is
$$
\tilde{\mathbb{X}}^{(\ell)}=\mathbb{X}^{(\ell)} \backslash\{\bm{x} \in \mathbb{X}^{(\ell)} | x_{r}^{(j)}=x_{k}^{(j)} \textrm { for some } 1 \leq j \leq J, 1 \leq k \neq r \leq \alpha_{\ell j}\}.
$$
\end{definition}
\section{Generator and process level description}\label{S:GeneratorProcessLevelDescription}
To formulate the process-level model, it is necessary to specify more concretely the reaction process between individual particles. We make assumptions on $K_{\ell}, m_{\ell}^{\eta}$  and $\pi_{\ell}^{\gamma}$ that are analogous to what we assumed in the introduction for the  $A+B \rightleftarrows C$ reaction.

For reaction $\mathcal{R}_{\ell}$, denote by $K_{\ell}^{\gamma}(\bm{x})$ the rate (i.e., probability per time) that substrate particles with positions $\bm{x} \in \mathbb{X}^{(\ell)}$ react. As described in the next section, we assume this rate function has a specific scaling dependence on $\gamma$. Let $m_{\ell}^{\eta}(\bm{y}|\bm{x})$ be the placement density when the substrates at $\bm{x} \in \mathbb{X}^{(\ell)}$ react and generate products at $\bm{y} \in \mathbb{Y}^{(\ell)}$. We assume that for each $\bm{x}$ and fixed $\eta>0, m_{\ell}^{\eta}(\cdot | \bm{x})$ is bounded. We let $\pi_{\ell}^{\gamma}(\bm{y}|\bm{x},\bm{q})$ denote the probability that a candidate reaction between the substrates located at $\bm{x} \in \mathbb{X}^{(\ell)}$, given the non-substrate and non-product particles at $\bm{q}$, is accepted and produces products at $\bm{y} \in \mathbb{Y}^{(\ell)}$. We assume that this acceptance probability can equivalently be written in terms of $\bm{x}$, $\bm{y}$, and the molar concentration of each species (rather than the specific positions, $\bm{q},$ of each non-substrate and non-product particle). Therefore, we will usually write the acceptance probability as $\pi_{\ell}^{\gamma} \bigl(\bm{y}|\bm{x}, \vmu_t^{\zeta}(dx')\bigr)$. Note that we assume the acceptance probability may have an explicit dependence on $\gamma$. See Section~\ref{S:FrohnerNoeModel} for explicit examples that demonstrate this dependence.

To describe a reaction $\mathcal{R}_{\ell}$ with no products, i.e., $1 \leq \ell \leq \tilde{L}$, we associate with it a Poisson point measure $d N_{\ell}(s, i, \theta)$ on $\mathbb{R}_{+} \times \mathbb{I}^{(\ell)} \times \mathbb{R}_{+}$. Here $\bm{i} \in \mathbb{I}^{(\ell)}$ gives the sampled substrate configuration, with $i_{r}^{(j)}$ labeling the $r$th sampled index of species $j$. The corresponding intensity measure of $d N_{\ell}$ is given by $d \bar{N}_{\ell}(s, i, \theta)=d s\bigl(\bigwedge_{j=1}^{J}\bigl(\bigwedge_{r=1}^{\alpha_{\ell j}}\bigl(\sum_{k \geq 0} \delta_{k}(i_{r}^{(j)})\bigr)\bigr)\bigr) d \theta$. Analogously, for each reaction $\mathcal{R}_{\ell}$ with products, i.e., $\tilde{L}+1 \leq \ell \leq L$, we associate with it a Poisson point measure $d N_{\ell}(s, \bm{i}, \bm{y}, \theta_{1}, \theta_{2})$ on $\mathbb{R}_{+} \times \mathbb{I}^{(\ell)} \times \mathbb{Y}^{(\ell)} \times \mathbb{R}_{+} \times \mathbb{R}_{+}$. Here $\bm{i} \in \mathbb{I}^{(\ell)}$ gives the sampled substrate configuration, with $i_{r}^{(j)}$ labeling the $r$th sampled index of species $j$. $\bm{y} \in \mathbb{Y}^{(\ell)}$ gives the sampled product configuration, with $y_{r}^{(j)}$ labeling the sampled position for the $r$th newly created particle of species $j$. The corresponding intensity measure is given by $d \bar{N}_{\ell}(s, \bm{i}, \bm{y}, \theta_{1}, \theta_{2})=$ $d s\bigl(\bigwedge_{j=1}^{J}\bigl(\bigwedge_{r=1}^{\alpha_{\ell j}}\bigl(\sum_{k \geq 0} \delta_{k}(i_{r}^{(j)})\bigr)\bigr)\bigr) d \bm{y} d \theta_{1} d \theta_{2}$

The existence of the Poisson point measure follows as the intensity measure is $\sigma$-finite (see Theorem $1.8.1$ in \cite{IkedaWatanabe2014} or Corollary $9.7$ in \cite{Kurtz2001}). Let $d \tilde{N}_{\ell}(s, \bm{i}, \bm{y}, \theta_{1}, \theta_{2})=d N_{\ell}(s, \bm{i}, \bm{y}, \theta_{1}, \theta_{2})-d \bar{N}_{\ell}(s, \bm{i}, \bm{y}, \theta_{1}, \theta_{2})$ be the compensated Poisson measure, for $\tilde{L}+1 \leq \ell \leq L$. For any measurable set $A \in \mathbb{I}^{(\ell)} \times \mathbb{Y}^{(\ell)} \times \mathbb{R}_{+} \times \mathbb{R}_{+}$ \textcolor{black}{ such that $\bar{N}_{\ell}(\cdot,A)<\infty$, which is true if for example A is bounded,} $N_{\ell}(\cdot, A)$ is a Poisson process and $\tilde{N}_{\ell}(\cdot, A)$ is a martingale (see Proposition $9.18$ in \cite{Kurtz2001}). Similarly, we can define $d \tilde{N}_{\ell}(s, i, \theta)=d N_{\ell}(s, i, \theta)-d \bar{N}_{\ell}(s, i, \theta)$, for $1 \leq \ell \leq \tilde{L}$. In this case, given any measurable set $A \in \mathbb{I}^{(\ell)} \times \mathbb{R}_{+}$ \textcolor{black}{{such that $\bar{N}_{\ell}(\cdot,A) < \infty$}}, we then have that $N_{\ell}(\cdot, A)$ is a Poisson process and $\tilde{N}_{\ell}(\cdot, A)$ is a martingale.

\subsection{Process level description.}
We now formulate a weak representation for the time evolution of scaled empirical measures $\mu_{t}^{\zeta, j}$, $j=1, \dots, J$. Denote by $\{W_{t}^{n}\}_{n \in \mathbb{N}_{+}}$ a countable collection of standard independent Brownian motions in $\mathbb{R}^{d}$. For a test function $f \in C_{b}^{2}(\mathbb{R}^{d})$ and for each species $j=1, \cdots, J$, a weak representation of the dynamics of $\mu_t^{\zeta,j}$ are given by (see also \eqref{systemfull}),
\begin{align*}
 \langle f, \mu_{t}^{\zeta, j}\rangle
&=\langle f, \mu_{0}^{\zeta, j}\rangle+\frac{1}{\gamma}\sum_{i \geq 1} \int_{0}^{t} 1_{\{i \leq \gamma\langle 1, \mu_{s^-}^{\zeta, j}\rangle\}} \sqrt{2 D_{j}} \frac{\partial f}{\partial Q}(H^{i}(\gamma\mu_{s^-}^{\zeta, j})\bigr) d W_{s}^{i}\\
&\phantom{=} + \frac{1}{\gamma}\int_{0}^{t} \sum_{i=1}^{\gamma\langle 1, \mu_{s^-}^{\zeta, j}\rangle} \biggl(D_{j} \frac{\partial^{2} f}{\partial Q^{2}}\bigl(H^{i}(\gamma\mu_{s^-}^{\zeta, j})\bigr) - \frac{\partial f}{\partial Q}\bigl(H^{i}(\gamma\mu_{s^-}^{\zeta, j})\bigr) \cdot \nabla v_j \bigl(H^{i}(\gamma\mu_{s^-}^{\zeta, j})\bigr)\biggr) d s \\
&\phantom{=} - \frac{1}{\gamma}\int_{0}^{t} \sum_{i=1}^{\gamma\langle 1, \mu_{s^-}^{\zeta, j}\rangle}\biggl(\frac{\partial f}{\partial Q}\bigl(H^{i}(\gamma\mu_{s^-}^{\zeta, j})\bigr) \cdot \frac{1}{\gamma}\nabla \sum_{j'=1}^{J} \sum_{k=1}^{\gamma\langle 1, \mu_{s^-}^{\zeta, j'}\rangle}u_{j,j'}\bigl(\|H^{i}(\gamma\mu_{s^-}^{\zeta,j}) - H^{k}(\gamma\mu_{s^-}^{\zeta, j'})\|\bigr)\biggr)d s\\
&\phantom{=} + \frac{1}{\gamma}\int_{0}^{t}\sum_{i=1}^{\gamma\langle 1, \mu_{s^-}^{\zeta, j}\rangle} \biggl(\frac{\partial f}{\partial Q}\bigl(H^{i}(\gamma\mu_{s^-}^{\zeta, j})\bigr) \cdot \frac{1}{\gamma}\nabla u_{j,j}\|0\|\biggr)ds\\
&\phantom{=} +\sum_{\ell=1}^{\tilde{L}} \int_{0}^{t} \int_{\mathbb{I}^{(\ell)}} \int_{\mathbb{Y}^{(\ell)}} \int_{\mathbb{R}_{+}^{2}}\biggl(\langle f, \mu_{s^-}^{\zeta, j}-\frac{1}{\gamma} \sum_{r=1}^{\alpha_{\ell j}} \delta_{H^{i_{r}^{(j)}}(\gamma \mu_{s^-}^{\zeta, j})}\rangle-\langle f, \mu_{s^-}^{\zeta, j}\rangle\biggr)\\
&\phantom{=} \qquad\qquad\qquad\qquad\qquad \times 1_{\{\bm{i} \in \Omega^{(\ell)}(\gamma\mu_{s^-}^{\zeta})\}} 1_{\{\theta_1 \leq K_{\ell}^{\gamma}\bigl(\mathcal{P}^{(\ell)}(\gamma\mu_{s^-}^{\zeta}, \bm{i})\bigr)\}} d N_{\ell}(s, \bm{i}, \theta_1, \theta_2)\\
&\phantom{=} +\sum_{\ell=\tilde{L}+1}^{L} \int_{0}^{t} \int_{\mathbb{I}^{(\ell)}} \int_{\mathbb{Y}^{(\ell)}} \int_{\mathbb{R}_{+}^{3}}\biggl(\langle f, \mu_{s^-}^{\zeta, j}-\frac{1}{\gamma} \sum_{r=1}^{\alpha_{\ell j}} \delta_{H^{i_{r}^{(j)}}(\gamma \mu_{s^-}^{\zeta, j})}+\frac{1}{\gamma} \sum_{r=1}^{\beta_{\ell j}} \delta_{y_{r}^{(j)}}\rangle-\langle f, \mu_{s^-}^{\zeta, j}\rangle\biggr)\\
&\phantom{=} \qquad\qquad\qquad\qquad\qquad \times 1_{\{\bm{i} \in \Omega^{(\ell)}(\gamma\mu_{s^-}^{\zeta})\}} 1_{\{\theta_1 \leq K_{\ell}^{\gamma}\bigl(\mathcal{P}^{(\ell)}(\gamma\mu_{s^-}^{\zeta}, \bm{i})\bigr)\}}  1_{\{\theta_2 \leq m_{\ell}^{\eta}\bigl(\bm{y}|\mathcal{P}^{(\ell)}(\gamma\mu_{s^-}^{\zeta}, \bm{i})\bigr)\}}\\
&\phantom{=} \qquad\qquad\qquad\qquad\qquad \times 1_{\{\theta_3 \leq \pi_{\ell}^{\gamma} \bigl(\bm{y}|\bm{x}, \vmu_{s^-}^{\zeta}(dx')\bigr)\}} d N_{\ell}(s, \bm{i, y}, \theta_1, \theta_2, \theta_3).\numberthis\label{system}
\end{align*}


Formula \eqref{system} captures the dynamics of our particle system. \textcolor{black}{Recall that here $N^{\zeta}(s) = \sum_{j=1}^J N^{\zeta,j}(s)$ denotes the total number of particles in the system at time $s$, with $N^{\zeta,j}(s) = \gamma\langle 1, \mu_{s}^{\zeta,j}\rangle$ the number of particles of type $j$ at time $s$.} $D^{i}$ labels the diffusion coefficient for the $i$th molecule, taking values in $\{D_{1}, \ldots, D_{J}\}$, where $D_{j}$ is the diffusion coefficient for species $S_{j}, j=1, \cdots, J$. The drift and diffusion of each particle are modeled by the four integrals on the first four lines of \eqref{system}. The fifth to sixth lines model reactions with no products, while the seventh to ninth lines model reactions with products. The integrals involving the Poisson measures $N_{\ell}$ model the different components of the reaction processes, and correspond to sampling the times at which reactions occur, which substrate particles react, where reaction products are placed, and whether the proposed reaction is accepted.

When the $\ell$th reaction happens for $\ell=\tilde{L}+1, \cdots, L$ (and analogously for $\ell=1, \cdots, \tilde{L}$), with probability per time given by the kernel $K_{\ell}^{\gamma}$, the system loses substrate particles and gains product particles. A sampling of possible reaction occurrences according to $K_{\ell}^{\gamma}$ occurs through the indicator functions on the sixth and eighth lines. The corresponding loss and gain of particles are encoded by the sums of delta functions on the fifth and seventh lines. Product positions are sampled according to the placement density $\textcolor{black}{m_{\ell}^{\eta}}$ through the indicator function on the eighth line. For reactions with products, the reaction $\mathcal{R}_{\ell}$ then fires according to the acceptance probability $\textcolor{black}{\pi_{\ell}^{\gamma}}$ through the indicator function on the ninth line. The indicators over elements of the sets $\Omega^{(\ell)}(\gamma\mu_{s-}^{\zeta})$ ensure that reactions can only occur between particles that correspond to a possible set of substrates. We access particle positions via the state vector $H^{i}$, as the particle labeled by $i$ in \eqref{system} will change dynamically as reactions occur.

For a test function $f \in C_{b}^{2}(\mathbb{R}^{d})$ and for each species $j=1, \cdots, J$, let us define the following generators for the drift-diffusion of particles
\begin{align}
(\mathcal{L}_{j}f)(x)&\coloneqq D_j \Delta_x f(x) - \nabla_x f(x) \cdot \nabla_x v_j (x),\nonumber\\
(\tilde{\mathcal{L}}_{j,j'} f)(x,y) &\coloneqq - \nabla_x f(x) \cdot \nabla u_{j,j'}(\norm{x-y}),\label{Eq:Operators}
\end{align}
and the corresponding formal adjoint operators,
\begin{align}
    (\mathcal{L}^*_jf)(x)&\coloneqq D_j \Delta_x f(x) + \nabla_x \cdot (\nabla_x v_j (x)f(x)\bigr),\nonumber\\
    (\tilde{\mathcal{L}}_{j,j'}^{*} f)(x,y) &\coloneqq  \nabla_x \cdot \bigl(f(x)(\nabla u_{j,j'}(\norm{x-y})\bigr).\label{Eq:AdjointOperators}
\end{align}

We will subsequently assume that $N^{\zeta}(s)=\gamma\langle 1, \mu_{s}^{\zeta}\rangle$ is uniformly bounded in time in Assumption \ref{L:uniformboundofmeasures}. The stochastic integral with respect to Brownian motion in $\eqref{system}$ is then a martingale for a fixed $\zeta$. Taking the expectation, we obtain for the mean that
\begin{align*}
    \mathbb{E}[\langle f, \mu_{t}^{\bm{\zeta}, j}\rangle]
    &= \mathbb{E}[\langle f, \mu_{0}^{\zeta, j}\rangle]+\mathbb{E}\biggl[\int_{0}^{t}\langle(\mathcal{L}_{j} f)(x), \mu_{s^-}^{\zeta, j}(d x)\rangle d s\biggr]\\
    &\phantom{=} +\mathbb{E}\biggl[\int_{0}^{t}\bigl\langle\sum_{j'=1}^{J}\langle(\tilde{\mathcal{L}}_{j,j'} f)(x,y), \mu_{s^-}^{\zeta, j'}(dy)\rangle,\mu_{s^-}^{\zeta, j}(d x)\bigr\rangle d s\biggr]\\
    &\phantom{=} +\frac{1}{\gamma}\mathbb{E}\biggl[\int_{0}^{t} \langle \frac{\partial f}{\partial Q}(x) \cdot \nabla u_{j,j}(\|0\|), \mu_{s^-}^{\zeta, j}(d x)\rangle d s\biggr] \\
    &\phantom{=}  -\sum_{\ell=1}^{\tilde{L}} \mathbb{E}\biggl[\int_{0}^{t} \int_{\tilde{\mathbb{X}}(\ell)} \frac{1}{\bm{\alpha}^{(\ell)}!} K_{\ell}(\bm{x})\bigl(\sum_{r=1}^{\alpha_{\ell j}} f(x_{r}^{(j)})\bigr)\lambda^{(\ell)}[\mu_{s^-}^{\zeta}](d \bm{x}) d s\biggr]\\
    &\phantom{=}  +\sum_{\ell=\tilde{L}+1}^{L} \mathbb{E}\biggl[\int_{0}^{t} \int_{\tilde{\mathbb{X}}(\ell)} \frac{1}{\bm{\alpha}^{(\ell)}!} K_{\ell}(\bm{x})\biggl(\int_{\mathbb{Y}^{(\ell)}}\bigl(\sum_{r=1}^{\beta_{\ell j}} f(y_{r}^{(j)})-\sum_{r=1}^{\alpha_{\ell j}} f(x_{r}^{(j)})\bigr){m_{\ell}^{\eta}(\bm{y} | \bm{x})}\\
    &\phantom{=}\qquad\qquad\qquad\qquad\qquad \times \pi_{\ell}^{\gamma} \bigl(\bm{y}|\bm{x}, \vmu_{s^-}^{\zeta}(dx')\bigr) d \bm{y}\biggr) \lambda^{(\ell)}[\mu_{s^-}^{\zeta}](d \bm{x}) d s\biggr]. 
\end{align*}


\section{Assumptions for the Mean Field Limit and Main Results}\label{S:MainAssumptions}
\subsection{Assumptions for the Mean Field Limit.}
With the introduction of general one and two-body drift terms, we will constrain our choices of the potential functions, the reaction kernels, placement densities, and acceptance probabilities through the following assumptions, along with assuming some basic properties of the underlying reaction network.

\subsubsection{Assumptions on the molar concentration fields}
\begin{assumption}\label{L:uniformboundofmeasures}We assume that the total (molar) population concentration satisfies $\sum_{j=1}^{J}\langle 1, \mu_{t}^{\zeta, j}\rangle \newline \leq$ $C_{\circ}$ for all $t<\infty$, i.e., is uniformly in time bounded by some constant $C_{\circ}<\infty$. 
\end{assumption}

\begin{assumption}
We assume that for all $1 \leq j \leq J$, the initial distribution $\mu_{0}^{\zeta, j} \rightarrow \xi_{0}^{j}$ weakly as $\zeta \rightarrow 0$, where $\xi_{0}^{j}$ is a compactly supported measure with finite mass. \label{ainitial}
\end{assumption}

\subsubsection{Assumptions on potentials}
\begin{assumption}\label{anorm}
For all $1 \leq j,j' \leq J$ and $x,y \in \mathbb{R}^d$, the one-body potential $v_{j}(x)$ and the (unscaled) pairwise potential $u_{j,j'}(x,y)\coloneqq u_{j,j'}(\|x-y\|)$ have bounded $C^2(\mathbb{R}^{d})$ and $C^1(\mathbb{R}^{2d})$ function norms respectively, i.e., there is some constant $C<\infty$ such that for all $j$ and $j'$
\begin{align*}
   \norm{v_j}_{C^2(\mathbb{R}^{d})} &= \displaystyle\sup_{x \in \mathbb{R}^d} |v_j(x)|+\displaystyle\sup_{x \in \mathbb{R}^d} | v_j^{'}(x)|+\displaystyle\sup_{x \in \mathbb{R}^d} | v_j^{''}(x)| \leq C< \infty\\
    \norm{u_{j,j'}}_{C^1(\mathbb{R}^{2d})} &= \displaystyle\sup_{x,y \in \mathbb{R}^d} |u_{j,j'}(x, y)| + \displaystyle\sup_{x,y \in \mathbb{R}^d} |D u_{j,j'}(x, y)| \leq C< \infty.
\end{align*}

\end{assumption}

\subsubsection{Assumptions on reaction functions, placement densities and acceptance probabilities}
 \begin{assumption}\label{A:K_bound}
  We assume that for all $1 \leq \ell \leq L$, the reaction rate kernel $K_{\ell}(\bm{x})$ is uniformly bounded for all $\bm{x} \in \mathbb{X}^{(\ell)}$, and denote generic constants dependent upon this bound by $C(K)$.
 \end{assumption}

 \begin{assumption}
  The reaction kernel is assumed to have the explicit $\gamma$ dependence that
$$
K_{\ell}^{\gamma}(\bm{x})=\gamma^{1-|\bm{\alpha}^{(\ell)}|} K_{\ell}(\bm{x})
$$
for any $\bm{x} \in \mathbb{X}^{(\ell)}, 1 \leq \ell \leq L$. See~\cite{IsaacsonMa2022} for motivation and further details on this choice.
 \end{assumption}

\begin{assumption}\label{A:PlacementMeasure}
We assume that for any $\eta \geq 0, \tilde{L}+1 \leq \ell \leq L, \bm{y} \in \mathbb{Y}^{(\ell)}$ and $\bm{x} \in \mathbb{X}^{(\ell)}$, the placement density $m_{\ell}^{\eta}(\bm{y}|\bm{x})$ is \textcolor{black}{uniformly bounded in $\bm{x}$ and $\bm{y}$, and is a probability density in $\bm{y}$,} i.e., $\int_{\mathbb{Y}^{(\ell)}} m_{\ell}^{\eta}(\bm{y}|\bm{x}) d \bm{y}=1$. \label{m}
\end{assumption}

To define placement densities $m_{\ell}^{\eta}(\bm{y}|\bm{x})$ in terms of delta-functions in a mathematically rigorous way, we introduce the displacement (i.e., smoothing) range parameter $\eta$ in order to mollify the limiting Dirac delta densities $m_{\ell}(\cdot | \bm{x})$ in the standard way.

 \begin{definition}
 For $x \in \mathbb{R}^{d}$, let $G(x)$ denote a standard positive mollifier and $G_{\eta}(x)=\eta^{-d} G(x / \eta)$. That is, $G(x)$ is a smooth function on $\mathbb{R}^{d}$ satisfying the following four requirements
 \begin{enumerate}
     \item $G(x) \geq 0$;
     \item $G(x)$ is compactly supported in $B(0,1)$, the unit ball in $\mathbb{R}^{d}$;
     \item $\int_{\mathbb{R}^{d}} G(x) d x=1$;
     \item $\displaystyle\lim_{\eta \rightarrow 0} G_{\eta}(x)=\displaystyle\lim_{\eta \rightarrow 0} \eta^{-d} G(x / \eta)=\delta_{0}(x)$, where $\delta_{0}(x)$ is the Dirac delta function and the limit is taken in the space of Schwartz distributions.
 \end{enumerate}
 \end{definition}
 The allowable forms of the placement density for each possible reaction are summarized below.
  \begin{assumption}\label{A:PlacementDensity}
The distributional limit of $m_{\ell}^{\eta}(\bm{y}|\bm{x})$ as $\eta \rightarrow 0$ is given by
$m_{\ell}(\bm{y}|\bm{x})$, a linear combination of Dirac delta functions, for any $\bm{y} \in \mathbb{Y}^{(\ell)}$ and $\bm{x} \in \mathbb{X}^{(\ell)}$.
\begin{enumerate}
    \item For a first order reaction $\mathcal{R}_{\ell}$ of the form $S_i \rightarrow S_j$,
 we assume that the placement density $m_{\ell}^{\eta}(y | x)$ takes the mollified form of
$$
m_{\ell}^{\eta}(y | x)=G_{\eta}(y-x),
$$
with the distributional limit as $\eta \rightarrow 0$ given by
$$
m_{\ell}(y | x)=\delta_{x}(y).
$$
\item For a first-order reaction $\mathcal{R}_{\ell}$ of the form $S_i \rightarrow S_j + S_k$,  we assume that the unbinding displacement density is in the mollified form of
$$
m_{\ell}^{\eta}(x, y | z)=\rho(|x-y|) \sum_{i=1}^{I} p_{i} \times G_{\eta}\biggl(z-\bigl(\alpha_{i} x+(1-\alpha_{i}) y\bigr)\biggr)
$$
and the distributional limit as $\eta \rightarrow 0$ is given by
$$
m_{\ell}(x, y | z)=\rho(|x-y|) \sum_{i=1}^{I} p_{i} \times \delta\biggl(z-\bigl(\alpha_{i} x+(1-\alpha_{i}) y\bigr)\biggr),
$$
with $p_i, \alpha_i \in [0,1],$ for $i \in \{1,\cdots,I\}$ and $\displaystyle\sum_{i=1}^{I} p_{i}=1$.
    \item For a second order reaction $\mathcal{R}_{\ell}$ of the form $S_i + S_k \rightarrow S_j,$ we assume that the binding placement density $m_{\ell}(z | x, y)$ takes the mollified form of
$$
m_{\ell}^{\eta}(z | x, y)=\sum_{i=1}^{I} p_{i} \times G_{\eta}\biggl(z-\bigl(\alpha_{i} x+(1-\alpha_{i}) y\bigr)\biggr),
$$
    and the distributional limit as $\eta \rightarrow 0$ is given by
$$
m_{\ell}(z | x, y)=\sum_{i=1}^{I} p_{i} \times \delta\biggl(z-\bigl(\alpha_{i} x+(1-\alpha_{i}) y\bigr)\biggr),
$$
with $p_i, \alpha_i \in [0,1],$ for $i \in \{1,\cdots,I\}$ and $\displaystyle\sum_{i=1}^{I} p_{i}=1$.
    \item For a second order reaction $\mathcal{R}_{\ell}$ of the form $S_i + S_k \rightarrow S_j + S_k$,we assume that the placement density $m_{\ell}(z, w | x, y)$ takes the mollified form of
$$
m_{\ell}^{\eta}(z, w | x, y)=p \times G_{\eta}(x-z) \times G_{\eta}(y-w)+(1-p) \times G_{\eta}(x-w) \times G_{\eta}(y-z),
$$
and the distributional limit as $\eta \rightarrow 0$ is given by
$$
m_{\ell}(z, w | x, y)=p \times \delta_{(x, y)}\bigl((z, w)\bigr)+(1-p) \times \delta_{(x, y)}\bigl((w, z)\bigr),
$$
with $p \in [0,1].$
\end{enumerate}
 \end{assumption}

\begin{remark}
We mention here that even though in Assumption \ref{A:PlacementDensity} the placement density $m_{\ell}^{\eta}$ is assumed to depend only on the regularizing parameter $\eta$, it is physically possible that it can also depend on the scaling parameter $\gamma$. We will see such an instance in Remark  \ref{R:PlacementDensityGammaDepencence}. It will be shown there the dependence is typically such that it is inconsequential for the limit as $\gamma\rightarrow\infty$, and as such, we have chosen not to explicitly denote it for simplicity of exposition.
\end{remark}
\begin{assumption}\label{L:tail}
For \eqref{m} to be true, we will need the probability density $\rho$ to be normalized, i.e.
$$
\int_{\mathbb{R}^{d}} \rho(|w|) d w=1.
$$
Since $\rho$ is a probability density, the previous condition implies that the tail probability
$$
\int_{r>R} \rho(r)r^{d-1}  d r < \varepsilon,
$$
for any $\varepsilon >0$ when $R$ is chosen sufficiently large.
\end{assumption}

Finally, we summarize assumed properties of the pre- and post-limit acceptance probabilities, $\pi_{\ell}^{\gamma}$ and $\pi_{\ell}$ respectively. An explicit example of such probabilities is provided in Section~\ref{S:FrohnerNoeModel}. Let $M_{F}(\mathbb{R}^{d})$ be the space of finite measures endowed with the weak topology and $\mathbb{D}_{M_{F}(\mathbb{R}^{d})}[0, T]$ be the space of c\`adl\`ag paths with values in $M_{F}(\mathbb{R}^{d})$ endowed with Skorokhod topology. As we will work with subsets of $M_F(\mathbb{R}^d)$ in which the measure of $\R^d$ is uniformly bounded, we also let
\begin{equation} \label{eq:Mhatdef}
    \hat{M}_F(\mathbb{R}^d; C_{\circ}) \coloneqq \{\mu \in M_F(\mathbb{R}^d) \,|\, \langle 1, \mu \rangle \leq C_{\circ} \}.
\end{equation}
\begin{definition}
For a complete measurable space $E$, we define the variation norm of finite measures $\|\cdot\|_{M_{F}(E)}$ on $M_{F}(E)$ as
$$\|\nu\|_{M_{F}(E)}=\displaystyle\sup _{f \in L^{\infty}(E),\|f\|_{L} \infty \leq 1}|\langle f, \nu\rangle_{E}|.$$ One can show via a density argument that an equivalent formulation is (see step 4 of Theorem 3.2 of \cite{Jourdain2012}) $$\|\nu\|_{M_{F}(E)}=\displaystyle\sup _{f \in C_{b}^{2}(E),\|f\|_{L} \infty \leq 1}|\langle f, \nu\rangle_{E}|.$$
\end{definition}

\begin{assumption}  \label{lippi}
  We assume that for any $\gamma \geq 0, \tilde{L}+1 \leq \ell \leq L, \bm{y} \in \mathbb{Y}^{(\ell)}, \bm{x} \in \mathbb{X}^{(\ell)}$ and $\vxi \coloneqq (\xi^{1},\xi^{2},\cdots,\xi^{J})$, $\vxibar \coloneqq (\bxi^{1},\bxi^{2},\cdots,\bxi^{J})$ both in $\otimes_{j=1}^J \hat{M}_{F}(\mathbb{R}^{d}; C_{\circ})$, the acceptance probability $\pi_{\ell}\bigl(\bm{y}|\bm{x}, \vxi(dx')\bigr)$ is bounded and Lipschitz continuous with Lipschitz constant $P$, i.e.,
\begin{equation}
     \displaystyle\sup_{\bm{y} \in \mathbb{Y}^{(\ell)}, \bm{x} \in \mathbb{X}^{(\ell)}}\bigl|\pi_{\ell}\bigl(\bm{y}|\bm{x}, \vxi(dx')\bigr)-\pi_{\ell} \bigl(\bm{y}|\bm{x}, \vxibar(dx')\bigl)\bigr|\leq P \sum_{i=1}^J\|\xi^{i}-\bar{\xi}^{i}\|_{M_{F}(\mathbb{R}^{d})}.
\end{equation}
\end{assumption}

\begin{assumption}  \label{lippiiny}
  We assume that for any $\gamma \geq 0, \tilde{L}+1 \leq \ell \leq L, \bm{y}, \bm{y'} \in \mathbb{Y}^{(\ell)}, \bm{x} \in \mathbb{X}^{(\ell)}$ and $\vxi \coloneqq (\xi^{1},\xi^{2},\cdots,\xi^{J})\in \otimes_{j=1}^J \hat{M}_{F}(\mathbb{R}^{d}; C_{\circ})$, the acceptance probability $\pi_{\ell}\bigl(\bm{y}|\bm{x}, \vxi(dx')\bigr)$ is bounded and Lipschitz continuous with Lipschitz constant $\tilde{P} $, i.e.,
\begin{equation}
    \sup_{\substack{\bm{x} \in \mathbb{X}^{(\ell)}\\\vxi \in \hat{M}_{F}(\mathbb{R}^{d}; C_{\circ})}}\bigl|\pi_{\ell}\bigl(\bm{y}|\bm{x}, \vxi(dx')\bigr)-\pi_{\ell} \bigl(\bm{y'}|\bm{x}, \vxi(dx')\bigl)\bigr|\leq \tilde{P} \norm{\bm{y}-\bm{y'}}.
\end{equation}
\end{assumption}

\begin{assumption} \label{convpi}
 We assume that for any $\gamma \geq 0, \tilde{L}+1 \leq \ell \leq L$ and $\vxi \coloneqq (\xi^{1},\xi^{2},\cdots,\xi^{J}) \in \otimes_{j=1}^J \hat{M}_{F}(\mathbb{R}^{d}; C_{\circ})$, $\pi_{\ell}^{\gamma}\bigl(\bm{y}|\bm{x}, \vxi(dx')\bigr)$ converges to $\pi_{\ell}\bigl(\bm{y}|\bm{x}, \vxi(dx')\bigr)$ uniformly as $\gamma \rightarrow \infty$ with respect to $\bm{y} \in \mathbb{Y}^{(\ell)}, \bm{x} \in \mathbb{X}^{(\ell)}$; equivalently,
\begin{equation}
     \sup_{\bm{y} \in \mathbb{Y}^{(\ell)}, \bm{x} \in \mathbb{X}^{(\ell)}}\bigl|\pi_{\ell}\bigl(\bm{y}|\bm{x}, \vxi(dx')\bigr) -\pi_{\ell}^{\gamma}\bigl(\bm{y}|\bm{x}, \vxi(dx')\bigr)\bigr| \xrightarrow[\gamma \rightarrow \infty]{} 0.
\end{equation}
\end{assumption}

\section{Mean field limit and examples.}\label{S:MainResult}
Denote by
\begin{align*}
    \xi_t^j &:= \lim_{\zeta \to 0} \mu^{\zeta,j}_t, &
    \xi_t &:= \lim_{\zeta \to 0} \mu^{\zeta}_t, &
    \vxi_t &:= (\xi_t^1,\dots,\xi_t^J) = \lim_{\zeta \to 0} \vmu^{\zeta}_t
\end{align*}
the limiting measures. Our main result is then
 \begin{theorem}\label{T:MainTheorem}
 (Mean field large population limit) Given Assumptions \ref{L:uniformboundofmeasures}-\ref{convpi}, the sequence of measure-valued processes $\{\vmu_{t}^{\zeta}\}_{t \in[0, T]} \in \mathbb{D}_{\otimes_{j=1}^{J} M_{F}(\mathbb{R}^{d})}([0, T])$ is relatively compact in $\mathbb{D}_{\otimes_{j=1}^{J} M_{F}(\mathbb{R}^{d})}([0, T])$ for each $j=1,2, \cdots, J$. It converges in distribution to $\{\vxi_t\}_{t \in [0,T]} \in C_{\otimes_{j=1}^{J} M_{F}(\mathbb{R}^{d})}([0, T])$ as $\zeta \rightarrow 0$. 
  Each $\xi_{t}^{j}$ is respectively the unique solution to
\begin{equation} \label{evl}
\begin{aligned}
\langle f, \xi_{t}^{j}\rangle &=\langle f, \xi_{0}^{j}\rangle+\int_{0}^{t}\langle(\mathcal{L}_{j} f)(x), \xi_{s}^{j}(d x)\rangle d s +\int_{0}^{t}\bigl\langle\sum_{j'=1}^J\langle(\tilde{\mathcal{L}}_{j,j'} f)(x,y), \xi_s^{j'}(dy)\rangle,\xi_{s}^{j}(d x)\bigr\rangle d s\\
&\phantom{=} -\sum_{\ell=1}^{\tilde{L}}\int_{0}^{t} \int_{\tilde{\mathbb{X}}(\ell)} \frac{1}{\bm{\alpha}^{(\ell)}!} K_{\ell}(\bm{x})\bigl(\sum_{r=1}^{\alpha_{\ell j}} f(x_{r}^{(j)})\bigr)\lambda^{(\ell)}[\xi_{s}](d \bm{x}) d s\\
&\phantom{=} +\sum_{\ell=\tilde{L}+1}^{L}\int_{0}^{t} \int_{\tilde{\mathbb{X}}(\ell)} \frac{1}{\bm{\alpha}^{(\ell)}!} K_{\ell}(\bm{x})\biggl(\int_{\mathbb{Y}^{(\ell)}}\bigl(\sum_{r=1}^{\beta_{\ell j}} f(y_{r}^{(j)})-\sum_{r=1}^{\alpha_{\ell j}} f(x_{r}^{(j)})\bigr) m_{\ell}(\bm{y} | \bm{x})\\
&\phantom{=} \qquad\qquad\qquad\qquad\qquad\qquad\qquad\qquad\qquad \times \pi_{\ell}\bigl(\bm{y}|\bm{x}, \vxi_{s}(dx')\bigr)d \bm{y}\biggr) \lambda^{(\ell)}[\xi_{s}](d \bm{x}) d s.
\end{aligned}
\end{equation}
\end{theorem}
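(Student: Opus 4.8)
The plan is to follow the classical Stroock--Varadhan martingale approach, as in~\cite{IsaacsonMa2022,EthierKurtz1986}, in three stages: (i) prove relative compactness of the laws of $\{\vmu_{t}^{\zeta}\}_{t\in[0,T]}$ in $\mathbb{D}_{\otimes_{j=1}^{J} M_{F}(\mathbb{R}^{d})}([0, T])$; (ii) show every subsequential limit point is supported on continuous paths and solves the weak equation~\eqref{evl}; and (iii) show that~\eqref{evl} has a unique solution in $C_{\otimes_{j=1}^{J} M_{F}(\mathbb{R}^{d})}([0, T])$, so that the full sequence converges in distribution to it. Compared with the purely-diffusive setting of~\cite{IsaacsonMa2022}, the two genuinely new features are the one- and two-body drift terms (which enter the tightness estimates, the identification of the bilinear limit term, and the uniqueness argument) and the dependence of the acceptance probabilities $\pi_{\ell}^{\gamma}$ on the empirical measure $\vmu^{\zeta}$, which makes the reactive coefficients in~\eqref{evl} nonlinear functionals of $\vxi$.

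For step (i), I would start from the weak representation~\eqref{system}, writing $\langle f,\mu_{t}^{\zeta,j}\rangle = \langle f,\mu_{0}^{\zeta,j}\rangle + A_{t}^{\zeta,j}(f) + M_{t}^{\zeta,j}(f)$, where $M^{\zeta,j}(f)$ collects the Brownian stochastic integral and the compensated Poisson integrals and $A^{\zeta,j}(f) = \int_{0}^{t}(\cdots)\,ds$ is the compensator appearing in the $\mathbb{E}[\langle f,\mu_{t}^{\zeta,j}\rangle]$ identity displayed in Section~\ref{S:GeneratorProcessLevelDescription}. Assumption~\ref{L:uniformboundofmeasures} gives $\langle 1,\mu_{t}^{\zeta}\rangle\le C_{\circ}$; together with boundedness of $\nabla v_{j}$ and $\nabla u_{j,j'}$ (Assumption~\ref{anorm}), the compact support of the initial data (Assumption~\ref{ainitial}), and the local-displacement structure of the product placement densities (Assumptions~\ref{A:PlacementDensity}--\ref{L:tail}), one obtains a uniform-in-$\zeta$ Lyapunov/moment bound on $\sup_{t\le T}\langle 1+\abs{x}^{2},\mu_{t}^{\zeta}\rangle$, which together with the mass bound yields the compact containment condition that $\{\mu_{t}^{\zeta}\}_{\zeta}$ is tight in $M_{F}(\mathbb{R}^{d})$ for each $t$. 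For the oscillation control I would use the Aldous--Rebolledo criterion: the predictable quadratic variation of $M^{\zeta,j}(f)$ is $O(1/\gamma)$ --- the Brownian part carries a $\gamma^{-2}\sum_{i}(\cdot)\le C\gamma^{-1}$ factor, and each reaction martingale has jumps of size $O(\gamma^{-1})$ against a compensator of order $\gamma$ (by Assumption~\ref{A:K_bound} and the $\gamma^{1-\abs{\bm{\alpha}^{(\ell)}}}$ scaling of $K_{\ell}^{\gamma}$) --- so $\sup_{t\le T}\abs{M_{t}^{\zeta,j}(f)}\to 0$ in $L^{2}$ by Doob's inequality, while $A^{\zeta,j}(f)$ has $\zeta$-uniformly bounded speed by boundedness of the potentials, reaction rates, placement densities, and acceptance probabilities. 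Tightness of $\langle f,\mu_{\cdot}^{\zeta,j}\rangle$ for $f$ in a countable convergence-determining subset of $C_{b}^{2}(\mathbb{R}^{d})$, together with compact containment, then gives tightness of $\{\vmu^{\zeta}\}$; since all jumps are $O(1/\gamma)$, every limit point is concentrated on $C_{\otimes_{j=1}^{J} M_{F}(\mathbb{R}^{d})}([0, T])$.

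For step (ii), fix a subsequential limit $\vxi$ and pass to the limit $\zeta\to 0$ term by term in~\eqref{system}; since $\vxi$ is continuous, $\vmu_{s}^{\zeta}\to\vxi_{s}$ for a.e.\ $s$ along the subsequence. The $\mathcal{L}_{j}$-term is linear with bounded integrand and converges directly; the two-body drift term $\int_{0}^{t}\bigl\langle\textstyle\sum_{j'=1}^{J}\langle(\tilde{\mathcal{L}}_{j,j'}f)(x,y),\mu_{s^-}^{\zeta,j'}(dy)\rangle,\mu_{s^-}^{\zeta,j}(dx)\bigr\rangle\,ds$ is bilinear in the measures with continuous bounded kernel $\nabla u_{j,j'}(\norm{x-y})$ (Assumption~\ref{anorm}), so it converges by convergence of the product measures $\mu_{s}^{\zeta,j}\otimes\mu_{s}^{\zeta,j'}$; and the self-interaction correction $\gamma^{-1}\langle\partial_{Q}f\cdot\nabla u_{j,j}(\norm{0}),\mu_{s^-}^{\zeta,j}\rangle$ is $O(1/\gamma)$ and drops out. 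For the reaction terms I would first rewrite the index sums against the Poisson measures as integrals against the product (substrate) measures $\lambda^{(\ell)}[\mu_{s^-}^{\zeta}]$ over $\tilde{\mathbb{X}}^{(\ell)}$ with the combinatorial factor $1/\bm{\alpha}^{(\ell)}!$ (the difference between ordered and unordered sampling, and between $\langle1,\mu^{j}\rangle^{2}$ and $\langle1,\mu^{j}\rangle(\langle1,\mu^{j}\rangle-\gamma^{-1})$, is $O(1/\gamma)$), exactly as in the compensator identity of Section~\ref{S:GeneratorProcessLevelDescription}; then use the mollification lemma of Appendix~\ref{S:Appendix} together with Assumptions~\ref{A:PlacementMeasure}--\ref{A:PlacementDensity} to send $m_{\ell}^{\eta}(\cdot\,|\,\bm{x})\to m_{\ell}(\cdot\,|\,\bm{x})$ distributionally against the smooth functions $\bm{y}\mapsto\sum_{r}f(y_{r}^{(j)})$; and finally use Assumption~\ref{convpi} (uniform-in-$\gamma$ convergence of $\pi_{\ell}^{\gamma}$ to $\pi_{\ell}$) together with Assumptions~\ref{lippi}--\ref{lippiiny} (Lipschitz dependence of $\pi_{\ell}$ on the measure argument and on $\bm{y}$) to replace $\pi_{\ell}^{\gamma}\bigl(\bm{y}|\bm{x},\vmu_{s^-}^{\zeta}(dx')\bigr)$ by $\pi_{\ell}\bigl(\bm{y}|\bm{x},\vxi_{s}(dx')\bigr)$. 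Since all integrands are uniformly bounded, the continuous mapping theorem and dominated convergence identify the $\zeta\to0$ limit of~\eqref{system} with~\eqref{evl}, the vanishing of $M^{\zeta,j}(f)$ ensuring that no extra martingale term survives.

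Step (iii), uniqueness of~\eqref{evl}, is the main obstacle, and it is here that the drift and the measure-dependent acceptance probabilities complicate matters relative to~\cite{IsaacsonMa2022}. A direct Gr\"onwall estimate in the variation norm $\norm{\cdot}_{M_{F}(\mathbb{R}^{d})}$ fails because $\langle(\mathcal{L}_{j}f)(x),\xi_{s}^{j}(dx)\rangle$ involves $\Delta f$ and $\nabla f$, which are not controlled by $\norm{f}_{\infty}$. The remedy is to recast~\eqref{evl} in mild (Duhamel) form: letting $(P_{t}^{j})_{t\ge0}$ be the conservative sub-Markov semigroup on $\mathbb{R}^{d}$ generated by $\mathcal{L}_{j}=D_{j}\Delta-\nabla v_{j}\cdot\nabla$ --- which, since $v_{j}\in C^{2}_{b}$ by Assumption~\ref{anorm}, has Gaussian-type bounds and the gradient estimate $\norm{\nabla P_{t}^{j}g}_{\infty}\le C t^{-1/2}\norm{g}_{\infty}$ --- a solution satisfies
\begin{equation*}
\xi_{t}^{j} = (P_{t}^{j})^{*}\xi_{0}^{j} + \int_{0}^{t}(P_{t-s}^{j})^{*}\Bigl[\textstyle\sum_{j'=1}^{J}\nabla\cdot\bigl(\xi_{s}^{j}\textstyle\int_{\R^{d}}\nabla u_{j,j'}(\norm{\cdot-y})\,\xi_{s}^{j'}(dy)\bigr) + \mathcal{R}_{j}[\vxi_{s}]\Bigr]\,ds,
\end{equation*}
where $\mathcal{R}_{j}[\vxi_{s}]$ denotes the measure-valued reactive terms of~\eqref{evl}. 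For two solutions $\vxi,\vxibar$ with the same initial data one then estimates $\norm{\xi_{t}^{j}-\bxi_{t}^{j}}_{M_{F}(\mathbb{R}^{d})}$: $(P_{t}^{j})^{*}$ is a contraction on $M_{F}(\mathbb{R}^{d})$, the divergence-form drift term picks up a factor $(t-s)^{-1/2}$ (integrable at $s=t$) times a bilinear-hence-locally-Lipschitz bound in the measures (using $\langle1,\cdot\rangle\le C_{\circ}$ and boundedness of $\nabla u_{j,j'}$), and the reactive term is Lipschitz in $\vxi_{s}$ by boundedness of $K_{\ell}$ and $m_{\ell}$ together with the Lipschitz bound on $\pi_{\ell}$ from Assumption~\ref{lippi}. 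This yields $\sum_{j}\norm{\xi_{t}^{j}-\bxi_{t}^{j}}_{M_{F}}\le C\int_{0}^{t}\bigl(1+(t-s)^{-1/2}\bigr)\sum_{j}\norm{\xi_{s}^{j}-\bxi_{s}^{j}}_{M_{F}}\,ds$, and a singular Gr\"onwall inequality forces $\vxi\equiv\vxibar$. With uniqueness in hand, every subsequential limit equals the unique solution of~\eqref{evl}, so the full sequence $\{\vmu^{\zeta}\}$ converges in distribution to it, completing the proof.
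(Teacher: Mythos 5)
Your overall architecture --- tightness (the paper defers this to \cite{IsaacsonMa2022} in Lemma \ref{L:Tightness}), identification of limit points within the Stroock--Varadhan martingale framework using the $O(1/\gamma)$ quadratic variation, the $1/\bm{\alpha}^{(\ell)}!$ rewriting of the compensator on $\tilde{\mathbb{X}}^{(\ell)}$, the mollifier estimate for $m^{\eta}_{\ell}-m_{\ell}$, and uniqueness via the semigroup (mild) formulation with the Bismut--Elworthy gradient bound $\abs{\nabla_x \mathcal{P}_{j,t-s}f(x)} \le C\bigl(1\wedge(t-s)^{1/2}\bigr)^{-1}\norm{f}_{\infty}$ followed by a weakly singular Gr\"onwall inequality --- is the same as the paper's (Sections \ref{SS:LimitIdentification} and \ref{SS:Uniqueness}); your uniqueness step in particular reproduces the paper's argument almost verbatim.

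There is, however, one step in your identification argument that fails as stated: you propose to replace $\pi^{\gamma}_{\ell}\bigl(\bm{y}|\bm{x},\vmu^{\zeta}_{s^-}(dx')\bigr)$ by $\pi_{\ell}\bigl(\bm{y}|\bm{x},\vxi_{s}(dx')\bigr)$ using Assumption \ref{convpi} \emph{together with} the Lipschitz bound of Assumption \ref{lippi}. Assumption \ref{lippi} is Lipschitz continuity in the \emph{variation} norm, and weak convergence $\vmu^{\zeta}_{s}\Rightarrow\vxi_{s}$ does not give $\sum_{i}\norm{\mu^{\zeta,i}_{s}-\xi^{i}_{s}}_{M_{F}(\R^{d})}\to 0$: the prelimit measures are purely atomic while the limit is in general diffuse, so this variation distance typically remains of the order of the total mass. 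The paper's proof is organized to avoid exactly this: in the term $\Lambda^{\zeta}_{5}$ of Section \ref{SS:LimitIdentification}, $\pi^{\gamma}_{\ell}$ and $\pi_{\ell}$ are only compared at the \emph{same} prelimit measure $\vmu^{\zeta}_{r^-}$ (Assumption \ref{convpi}), and $m^{\eta}_{\ell}-m_{\ell}$ is likewise handled at the same measure via the estimate of Appendix \ref{S:mollifierconvlemmaproof}; the eventual passage from $\vmu^{\zeta}$ to $\vxi$ inside the reactive coefficient is part of the final continuity-of-the-limiting-functional step in the assembly (as in the proof of Theorem 5.5 of \cite{IsaacsonMa2022}), which rests on weak continuity of $\vmu\mapsto\pi_{\ell}(\bm{y}|\bm{x},\vmu)$ --- automatic for the Fr\"ohner--No\'e form, where the measure enters only through integrals of the bounded continuous kernels $u_{j,j'}$ --- and not on the variation-norm bound. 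Assumption \ref{lippi}'s real role is in the uniqueness argument of Section \ref{SS:Uniqueness}, where two candidate limit solutions are compared precisely in the variation norm. If you reorganize your step (ii) accordingly (compare $\pi^{\gamma}_{\ell}$ with $\pi_{\ell}$ at $\vmu^{\zeta}_{s^-}$, then pass to the limit of the full reactive functional by weak continuity), your argument coincides with the paper's.
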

\begin{remark}
If the limiting measures $\vxi_t = (\xi_{t}^{1}(dx), \xi_{t}^{2}(dx), \cdots, \xi_{t}^{J}(dx)\bigr)$ have marginal densities $\vrho(x,t) := \bigl(\rho_1(x,t),\dots,\rho_J(x,t)\bigr)$, then these marginals solve, in a weak sense, the following reaction-diffusion PIDEs
\begin{align*}
\partial_{t} &\rho_{j}(x, t)=D_{j} \Delta_{x} \rho_{j}(x,t) + \nabla_x \cdot \bigl(\nabla_x v_j (x) \rho_j(x,t)\bigr)\\
&\phantom{=} + \nabla_x \cdot \bigl(\rho_j(x,t)\int_{\mathbb{R}^d} \sum_{j'=1}^{J} \nabla u_{j,j'}(\norm{x-y})\rho_{j'}(y,t)dy\bigr)\\
&\phantom{=} -\sum_{\ell=1}^{\tilde{L}}\frac{1}{\bm{\alpha}^{(\ell)}!} \int_{\tilde{\mathbb{X}}^{(\ell)}} K_{\ell}(\bm{x})\bigl(\sum_{r=1}^{\alpha_{\ell j}}\delta_{x}(x_{r}^{(j)}) \bigr) \bigl(\Pi_{k=1}^{J} \Pi_{s=1}^{\alpha_{\ell k}} \rho_{k}(x_{s}^{(k)}, t)\bigr) d \bm{x}\\
&\phantom{=} +\sum_{\ell=\tilde{L}+1}^{L}\frac{1}{\bm{\alpha}^{(\ell)}!} \int_{\tilde{\mathbb{X}}^{(\ell)}} K_{\ell}(\bm{x})\biggl(\int_{\mathbb{Y}^{(\ell)}}\bigl(\sum_{r=1}^{\beta_{\ell j}}\delta_{x}(y_{r}^{(j)}) -\sum_{r=1}^{\alpha_{\ell j}}\delta_{x}(x_{r}^{(j)}) \bigr) m_{\ell}(\bm{y} | \bm{x})\pi_{\ell}\bigl(\bm{y}|\bm{x}, \vrho(x',t)dx'\bigr)d\bm{y}\biggr) \\
&\phantom{=} \qquad\qquad\qquad\qquad\qquad\qquad\qquad\qquad\qquad\qquad\qquad\qquad\qquad\times \bigl(\Pi_{k=1}^{J} \Pi_{s=1}^{\alpha_{\ell k}} \rho_{k}(x_{s}^{(k)}, t)\bigr) d\bm{x}.
\end{align*}
\end{remark}

We now present a few examples to illustrate the limiting PIDEs for basic reaction types:

\begin{example}
Consider a system with three species, $A$, $B$, and $C$ that can undergo the reversible reaction $A+B \rightleftarrows C$. Define the measures for $A$, $B$, and $C$ particles at time $t$ respectively as $\mu_{t}^{\zeta, 1}, \mu_{t}^{\zeta, 2},$ and $\mu_{t}^{\zeta, 3} \in M(\mathbb{R}^{d})$. 

Let $\mathcal{R}_{1}$ be the forward reaction $A+B \rightarrow C$, with $K_{1}^{\gamma}(x, y)$ the probability per unit time one $A$ particle at position $x$ and one $B$ particle at position $y$ bind. Once reaction $\mathcal{R}_{1}$ fires, we generate a new particle $C$ at position $z$ following the placement density $m_{1}^{\eta}(z | x, y)$. Reaction $\mathcal{R}_{1}$ is then accepted with probability $\pi_{1}\bigl(z|x, y, \vmu_{t}^{\zeta}(dx')\bigr)$. For $\mathcal{R}_{1}$, the substrates are particles of species $A$ and $B$, so $\alpha_{11}=\alpha_{12}=1$ and $\alpha_{13}=0$. The product is one particle $C$, so $\beta_{11}=\beta_{12}=0$ and $\beta_{13}=1 .$

Let $\mathcal{R}_{2}$ be the backward reaction $C \rightarrow A+B$, with $K_{2}^{\gamma}(z)$ the probability per time one $C$ particle at position $z$ unbinds. Once reaction $\mathcal{R}_{2}$ fires, we generate a new particle $A$ at position $x$ and a new particle $B$ at position $y$ following the placement density $m_{2}^{\eta}(x,y|z)$. Reaction $\mathcal{R}_{2}$ is accepted with the acceptance probability $\pi_{2}\bigl(z|x, y, \vmu_{t}^{\zeta}(dx')\bigr)$. For $\mathcal{R}_{2}$, the substrate is a $C$ particle, so $\alpha_{21}=\alpha_{22}=0$ and $\alpha_{23}=1$. The products are $A$ and $B$ particles, so $\beta_{21}=\beta_{22}=1$ and $\beta_{23}=0 .$

If the limiting spatially distributed measures for species $A, B$ and $C$ have marginal densities $\bigl(\rho_{1}(x, t), \rho_{2}(x, t), \rho_{3}(x, t)\bigr)$ respectively, they solve the system (\ref{Eq:ThreeSpeciesReversibleExampleLimit}) with the molar concentrations $(A,B,C)=(\rho_1,\rho_2, \rho_{3})$.
\end{example}
\begin{example}
Consider a system with four species, $A, B, C$ and $D$ that can undergo the reversible reaction $A+B \rightleftarrows C +D.$
Define the measures for A, B, C, and D particles at time $t$ respectively as $\mu_{t}^{\zeta, 1}, \mu_{t}^{\zeta, 2}, \mu_{t}^{\zeta, 3}$ and $\mu_{t}^{\zeta, 4} \in M(\mathbb{R}^{d})$.

Let $\mathcal{R}_{1}$ be the forward reaction $A+B \rightarrow C+D$, with $K_{1}^{\gamma}(x, y)$ the probability per time one $A$ particle at position $x$ and one $B$ particle at position $y$ bind. Once reaction $\mathcal{R}_{1}$ fires, we generate one new particle $C$ at position $w$ and one new particle $D$ at position $z$ following the placement density $m_{1}^{\eta}(w, z | x, y)$. Reaction $\mathcal{R}_{1}$ is then accepted with probability $\pi_{1}\bigl(w,z|x, y, \vmu_{t}^{\zeta}(dx')\bigr)$. For $\mathcal{R}_{1}$, the substrates are particles of species $A$ and $B$, so $\alpha_{11}=\alpha_{12}=1$ and $\alpha_{13}=\alpha_{14}=0$. The products are one particle $C$ and one particle $D$, so $\beta_{11}=\beta_{12}=0$ and $\beta_{13}=\beta_{14}=1 .$

Let $\mathcal{R}_{2}$ be the backward reaction $C+D \rightarrow A+B$, with $K_{2}^{\gamma}(w,z)$ the probability per time one $C$ particle at position $w$ and one $D$ particle at position $z$ bind. Once reaction $\mathcal{R}_{2}$ fires, we generate a new particle $A$ at position $x$ and a new particle $B$ at position $y$ following the placement density $m_{2}^{\eta}(x,y|w,z)$. Reaction $\mathcal{R}_{2}$ is accepted with the acceptance probability $\pi_{2}\bigl(w,z|x, y, \vmu_{t}^{\zeta}(dx')\bigr)$. For $\mathcal{R}_{2}$, the substrates are one $C$ particle and one $D$ particle, so $\alpha_{21}=\alpha_{22}=0$ and $\alpha_{23}=\alpha_{24}=1$. The products are one $A$ particle and one $B$ particle, so $\beta_{21}=\beta_{22}=1$ and $\beta_{23}=\beta_{24}=0 .$

If the limiting spatially distributed measures for species $A, B, C$ and $D$ have marginal densities $\bigl(\rho_{1}(x, t), \rho_{2}(x, t), \rho_{3}(x, t), \rho_{4}(x, t)\bigr)$ respectively, they must solve the following reaction-diffusion equations in a weak sense:
\begin{align*}
\partial_{t} \rho_{1}(x, t)&=D_{1} \Delta_{x} \rho_{1}(x, t)+ \nabla_x \cdot \bigl(\nabla_x v_1 (x) \rho_1(x,t)\bigr)\\
&\phantom{=} + \nabla_x \cdot \bigl(\rho_1(x,t)\int_{\mathbb{R}^d} \sum_{j=1}^{4} \nabla u_{1,j}(\norm{x-y})\rho_{j}(y,t)dy\bigr)\\
&\phantom{=} -\biggl(\int_{\mathbb{R}^{d}} K_{1}(x, y) \biggl(\int_{\mathbb{R}^{2d}}m_1(w,z|x,y)\pi_1\bigl(w,z|x,y, \vrho(x',t)dx'\bigr)dw dz\biggr) \rho_{2}(y, t) d y\biggr) \rho_{1}(x, t)\\
&\phantom{=} +\int_{\mathbb{R}^{2d}} K_{2}(w,z)\biggl(\int_{\mathbb{R}^{d}} m_{2}(x, y |w, z)\pi_{2}\bigl(x,y|w,z, \vrho(x',t)dx'\bigr) d y\biggr) \rho_{3}(w, t) d w \rho_{4}(z, t) d z \\
\partial_{t} \rho_{2}(y, t) &= D_{2} \Delta_{y} \rho_{2}(y, t)+ \nabla_y \cdot \bigl(\nabla_y v_2 (y) \rho_2(y,t)\bigr)\\
&\phantom{=} + \nabla_y \cdot \bigl(\rho_2(y,t)\int_{\mathbb{R}^d} \sum_{j=1}^{4} \nabla u_{2,j}(\norm{y-x})\rho_{j}(x,t)dx\bigr) \\
&\phantom{=} -\biggl(\int_{\mathbb{R}^{d}} K_{1}(x, y) \biggl(\int_{\mathbb{R}^{2d}}m_{1}(w,z| x,y)\pi_{1}\bigl(w,z|x,y, \vrho(x',t)dx'\bigr)dwdz\biggr)\rho_{1}(x, t) d x \biggr)\rho_{2}(y, t)\\
&\phantom{=} +\int_{\mathbb{R}^{2d}} K_{2}(w,z)\biggl(\int_{\mathbb{R}^{d}} m_{2}(x, y |w, z)\pi_{2}\bigl(x,y|w,z, \vrho(x',t)dx'\bigr) d x\biggr) \rho_{3}(w, t) d w\rho_{4}(z, t) dz \\
\partial_{t} \rho_{3}(w, t) &= D_{3} \Delta_{w} \rho_{3}(w, t)+ \nabla_w \cdot \bigl(\nabla_w v_3 (w) \rho_3(w,t)\bigr)\\
&\phantom{=} + \nabla_w \cdot \bigl(\rho_3(w,t)\int_{\mathbb{R}^d} \sum_{j=1}^{4} \nabla u_{3,j}(\norm{w-x})\rho_{j}(x,t)dx\bigr)\\
&\phantom{=} +\int_{\mathbb{R}^{2d}} K_{1}(x, y)
\biggl(\int_{\mathbb{R}^{d}}m_{1}(w,z | x, y) \pi_1\bigl(w,z|x,y, \vrho(x',t)dx'\bigr)dz\biggr)\rho_{1}(x, t) \rho_{2}(y, t) d x d y\\
&\phantom{=} -\biggl(\int_{\mathbb{R}^{d}}K_{2}(w,z)\biggl(\int_{\mathbb{R}^{2d}} m_{2}(x, y|w,z) \pi_2\bigl(x, y|w,z, \vrho(x',t)dx'\bigr)dxdy\biggr) \rho_{4}(z, t)dz\biggr)\rho_{3}(w, t)\\
\partial_{t} \rho_{4}(z, t)&=D_{4} \Delta_{z} \rho_{4}(z, t)+ \nabla_z \cdot \bigl(\nabla_z v_4 (z) \rho_4(z,t)\bigr)\\
&\phantom{=} + \nabla_z \cdot \bigl(\rho_4(z,t)\int_{\mathbb{R}^d} \sum_{j=1}^{4} \nabla u_{4,j}(\norm{z-x})\rho_{j}(x,t)dx\bigr) \\
&\phantom{=} +\int_{\mathbb{R}^{2d}} K_{1}(x, y)\biggl(\int_{\mathbb{R}^{d}}m_{1}(w,z | x, y) \pi_1\bigl(w,z|x,y, \vrho(x',t)dx'\bigr)dw\biggr)\rho_{1}(x, t) \rho_{2}(y, t) d x d y\\
&\phantom{=} -\biggl(\int_{\mathbb{R}^{d}}K_{2}(w,z)\biggl(\int_{\mathbb{R}^{2d}} m_{2}(x, y|w,z) \pi_2\bigl(x, y|w,z, \vrho(x',t)dx'\bigr)dxdy\biggr) \rho_{3}(w, t)dw\biggr)\rho_{4}(z, t).
\end{align*}
\end{example}

\begin{example}
Consider a system with two species, $A$ and $B$ that can undergo the reversible dimerization reaction $A+A \rightleftarrows B.$
Define the measures for A and B particles at time $t$ respectively as $\mu_{t}^{\zeta, 1}$ and $\mu_{t}^{\zeta, 2} \in M(\mathbb{R}^{d})$.

Let $\mathcal{R}_{1}$ be the forward reaction $A+A \rightarrow B$, with $K_{1}^{\gamma}(x, y)$ the probability per time one $A$ particle at position $x$ and another $A$ particle at position $y$ bind. Once reaction $\mathcal{R}_{1}$ fires, we generate a new particle $B$ at position $z$ following the placement density $m_{1}^{\eta}(z | x, y)$. Reaction $\mathcal{R}_{1}$ is then accepted with probability $\pi_{1}\bigl(z|x, y, \vmu_{t}^{\zeta}(dx')\bigr)$. For $\mathcal{R}_{1}$, the substrates are particles of species $A$, so $\alpha_{11}=2$ and $\alpha_{12}=0$. The product is one particle $B$, so $\beta_{11}=0$ and $\beta_{12}=1.$

Let $\mathcal{R}_{2}$ be the backward reaction $B \rightarrow A+A$, with $K_{2}^{\gamma}(z)$ the probability per time one $B$ particle at position $z$ unbinds. Once reaction $\mathcal{R}_{2}$ fires, we generate two new $A$ particles at $x$ and $y$ following the placement density $m_{2}^{\eta}(x,y|z)$. Reaction $\mathcal{R}_{2}$ is accepted with the acceptance probability $\pi_{2}\bigl(z|x, y, \vmu_{t}^{\zeta}(dx')\bigr)$. For $\mathcal{R}_{2}$, the substrate is one $B$ particle, so $\alpha_{21}=0$ and $\alpha_{22}=1$. The products are two $A$ particles, so $\beta_{21}=2$ and $\beta_{22}=0$.

If the limiting spatially distributed measures for species $A$ and $B$ have marginal densities $\bigl(\rho_{1}(x, t), \rho_{2}(x, t)\bigr)$ respectively, they must solve the following reaction-diffusion equations in a weak sense:
\begin{align*}
\partial_{t} \rho_{1}(x, t)&=D_{1} \Delta_{x} \rho_{1}(x, t)+ \nabla_x \cdot \bigl(\nabla_x v_1 (x) \rho_1(x,t)\bigr)\\
&\phantom{=} + \nabla_x \cdot \bigl(\rho_1(x,t)\int_{\mathbb{R}^d} \sum_{j=1}^{2} \nabla u_{1,j}(\norm{x-y})\rho_{j}(y,t)dy\bigr)\\
&\phantom{=} -\biggl(\int_{\mathbb{R}^{d}}K_{1}(x, y) \biggl(\int_{\mathbb{R}^{d}}m_1(z|x,y)\pi_1\bigl(z|x,y, \vrho(x',t)dx'\bigr)dz\biggr) \rho_{1}(y, t)dy\biggr)\rho_{1}(x, t) \\
&\phantom{=} +2\int_{\mathbb{R}^{d}} K_{2}(z)\biggl(\int_{\mathbb{R}^{d}}m_{2}(x, y | z)\pi_{2}\bigl(x,y|z, \vrho(x',t)dx'\bigr)dy\biggr)\rho_{2}(z, t) d z \\
\partial_{t} \rho_{2}(y, t)&=D_{2} \Delta_{y} \rho_{2}(y, t)+ \nabla_y \cdot \bigl(\nabla_y v_2 (y) \rho_2(y,t)\bigr)\\
&\phantom{=} + \nabla_y \cdot \bigl(\rho_2(y,t)\int_{\mathbb{R}^d} \sum_{j=1}^{2} \nabla u_{2,j}(\norm{y-x})\rho_{j}(x,t)dx\bigr)\\
&\phantom{=} +\frac{1}{2}\int_{\mathbb{R}^{d} \times \mathbb{R}^{d}} K_{1}(x, y) m_{1}(z | x, y) \pi_1\bigl(z|x,y, \vrho(x',t)dx'\bigr)\rho_{1}(x, t) \rho_{1}(y, t) d x d y\\
&\phantom{=} -K_{2}(z)\biggl(\int_{\mathbb{R}^{d} \times \mathbb{R}^{d}} m_{2}(x, y|z) \pi_2\bigl(x, y|z, \vrho(x',t)dx'\bigr)dxdy\biggr)\rho_{2}(z, t).
\end{align*}
\end{example}

\section{A specific example: Fr\"{o}hner-No\'{e} Model}\label{S:FrohnerNoeModel}
In this section, we study an example based on the specific acceptance probabilities, $\pi(\bm{y}|\bm{x},\vmu_t^{\zeta})$, proposed in Fr\"{o}hner-No\'e in~\cite{FrohnerNoe2018}. We derive a specific formulation of the acceptance probability which preserves the detailed balance condition for general reversible reactions, present the corresponding acceptance probabilities, and illustrate their large population limit for general systems involving one- and two-body interactions.

To illustrate the particular acceptance probability $\pi_{\ell}^{\gamma}$ in the generalized Fr\"{o}hner-No\'e model, consider the $\mathcal{R}_{1}$ reaction $A+B \rightarrow C$, where one $A$ particle at $x$ binds with one $B$ particle at $y$ to produce one $C$ particle at $z$, and the other non-substrate and non-product particles are located at $\bm{q}$. Denote the total potential energy of the system before $\mathcal{R}_{1}$ by $\Phi_1^{-,\gamma}(x,y,\bm{q})$ and after $\mathcal{R}_{1}$ by $\Phi_1^{+,\gamma}(z,\bm{q})$. We assume that the total potential function depends on the system size parameter $\gamma$ and consists of only one- and two-body potentials. We then represent the total potential energy in the system prior to $\mathcal{R}_1$ by
\begin{equation*}
    \Phi_1^{-,\gamma}(x,y,\bm{q}) = \Phi^{\gamma}(\bm{q}) + v_1(x) + v_2(y) + u_1^{\gamma}(x;\bm{q})+u_2^{\gamma}(y;\bm{q}) + u_{1,2}^{\gamma}(x,y),
\end{equation*}
and the total potential energy in the system after $\mathcal{R}_1$ by
\begin{equation*}
    \Phi_1^{+,\gamma}(z,\bm{q}) = \Phi^{\gamma}(\bm{q}) + v_3(z) + u_3^{\gamma}(z;\bm{q}).
\end{equation*}
Here, $\Phi^{\gamma}(\bm{q})$ denotes the total potential interactions between the non-substrate and non-product particles at $\bm{q}$; $v_1(x)$, $v_2(y)$ and $v_3(z)$ represent all one-body interactions involving the substrates at $x$ and $y$ and the product at $z$, respectively; $u_1^{\gamma}(x;\bm{q})$, $u_2^{\gamma}(y;\bm{q})$ and $u_3^{\gamma}(z;\bm{q})$ denote all pairwise interactions between each substrate/product and the non-substrate and non-product particles at $\bm{q}$; finally, $u^{\gamma}_{1,2}(x,y) = u_{1,2}(x,y) / \gamma$ represents the specific two-body interaction between the two substrate particles.

Let $\vmu_t^{\zeta}$ denote the pre-reaction state of the system, consistent with the state corresponding to having the two substrates at $(x,y)$ and the non-reactant particles at $\bm{q}$. The Fr\"{o}hner-No\'{e} acceptance probability of $\mathcal{R}_1$  takes the form
\begin{equation*}
    \pi_{1}^{\gamma}(z|x,y,\vmu_t^{\zeta}(dx')) = \min\left\{1, e^{-\bigl[\Phi_1^{+,\gamma}(z,\bm{q})-\bigl(\Phi_1^{-,\gamma}(x,y,\bm{q})-u^{\gamma}_{1,2}(x,y)\bigr)\bigr]}\right\}.
\end{equation*}
In the next section we will demonstrate why it is appropriate to treat $\pi_1$ as a function of $z$, $x$, $y$, and $\vmu^{\zeta}_t$. The acceptance probability $\pi_{1}^{\gamma}$ always accepts a $\mathcal{R}_1$ reaction where the potential energy, excluding the pairwise potential $u_{1,2}^{\gamma}(x,y)$ between the substrates, decreases from pre-reaction stage to post-reaction stage. On the other hand, if the potential difference is positive, then the $\mathcal{R}_1$ reaction is only accepted a fraction of time. In Subsections \ref{SS:AcceptanceProbNoeModel} and \ref{SS:ExamplesNoeModel} we present the specifics of these constructions and illustrate them in a number of examples.

\subsection{Acceptance probability.}\label{SS:AcceptanceProbNoeModel}

In this section we present the functional form of the acceptance probability in the generalized Fr\"{o}hner-No\'e model, illustrate why it can be written as a function of $\vmu_t^{\eta}$ instead of the positions of the non-reactant particles, and discuss its limiting form as $\gamma \to \infty$. The proof that this generalized Fr\"{o}hner-No\'e acceptance probability satisfies the assumptions of this paper is presented in the appendix.

Let $V(\bm{x})$ represent the total one-body interactions involving each of the substrates at $\bm{x} \in \mathbb{X}^{(\ell)}$, where
\begin{equation*}
    V(\bm{x}) = \sum_{j=1}^{J}\sum_{r=1}^{\alpha_{\ell j}}v_j(x_r^{(j)}).
\end{equation*}
Similarly, denote all one-body interactions involving each of the product particles at $\bm{y} \in \mathbb{Y}^{(\ell)}$ by $V(\bm{y})$, where
\begin{equation*}
 V(\bm{y}) = \sum_{j=1}^{J}\sum_{r=1}^{\beta_{\ell j}}v_j(y_r^{(j)}).
\end{equation*}

Recall that we denote one-body potentials by $v_{j}(x)$ for each particle at $x$ of type $j$ and two-body potentials by $u_{j,j'}^{\gamma}(x,y)=\frac{1}{\gamma}u_{j,j'}(x,y)$ between particles at $x$ of type $j$ and at $y$ of type $j'$ for $j, j'=1, \cdots, J$. We slightly abuse notation for the following discussion and use $U^{\gamma}(\bm{x})$ to represent the total pairwise interactions between the substrate particles at $\bm{x} \in \mathbb{X}^{(\ell)}$, where
\begin{equation*}
    U^{\gamma}(\bm{x}) = \sum_{j=1}^{J}\sum_{r=1}^{\alpha_{\ell j}}\biggl[\sum_{j'=j+1}^{J}\sum_{r'=1}^{\alpha_{\ell j'}}\frac{1}{\gamma}u_{j,j'}(x^{(j)}_{r},x^{(j')}_{r'})+\sum_{r'=1}^{r-1}\frac{1}{\gamma}u_{j,j}(x^{(j)}_{r},x^{(j)}_{r'})\biggr].
\end{equation*}
$U^{\gamma}(\bm{y})$ is defined analogously to represent the total pairwise interactions between the product particles at $\bm{y}\in \mathbb{Y}^{(\ell)}$, where
\begin{equation*}
    U^{\gamma}(\bm{y}) = \sum_{j=1}^{J}\sum_{r=1}^{\beta_{\ell j}}\biggl[\sum_{j'=j+1}^{J}\sum_{r'=1}^{\beta_{\ell j'}}\frac{1}{\gamma}u_{j,j'}(y^{(j)}_{r},y^{(j')}_{r'})+\sum_{r'=1}^{r-1}\frac{1}{\gamma}u_{j,j}(y^{(j)}_{r},y^{(j)}_{r'})\biggr].
\end{equation*}
Here, the first term represents the pairwise potential between substrates/products of different types, and the second term accounts for the two-body interactions between substrates/products of the same type. The lower limit of the first summation, $j+1$, and the upper limit of the second summation, $r-1$, prevent counting the two-body potential between the substrates/products twice. Note that both $U^{\gamma}(\bm{x})$ and $U^{\gamma}(\bm{y})$ converge to $0$ as $\gamma \rightarrow \infty$ due to the assumed boundedness of the two-body potentials.

When the system state is given by $\vmu_{t^-}^{\zeta}$, for substrates at $\bm{x} \in \mathbb{X}^{(\ell)}$ we denote by $\bm{q}^{j} \in \mathbb{R}^{d\times \bigl(N_{j}(s^-)-\alpha_{\ell j}\bigr)}$ the position vector for the non-reactant particles of type $j$. That is, $\bm{q}^j$ corresponds to the particle positions within
\begin{align*}
    H\left(\gamma \mu_{t^-}^{\zeta,j}-\sum_{r=1}^{\alpha_{\ell j}}\delta_{H^{i_r^{(j)}}(\gamma \mu_{t^-}^{\zeta, j})}\right)
\end{align*}
where $\bm{i} \in \mathbb{I}^{(\ell)}$ are the subset of particle indices that are represented within $\bm{x}$. We then have $\bm{q} = ( \bm{q}^{1},\cdots, \bm{q}^{J})$.

With some abuse of notation, we write the sum of all pairwise interactions between the substrates at $\bm{x} \in \mathbb{X}^{(\ell)}$ and the non-substrate and non-product particles at $\bm{q}$ as $U^{\gamma}(\bm{x};\bm{q})$. To encode information about particle types, we let $U_{j,j'}^{\gamma}(\bm{x}^{j};\bm{q}^{j'})$ denote all pairwise interactions between the sampled substrates of type $j$ at $\bm{x}^{j} = (x_{1}^{(j)},\cdots,x_{\alpha_{\ell j}}^{(j)})$ and the non-reactant particles of species $j'$ at $\bm{q}^{j'}$, with $j, j'=1,\cdots,J$. Therefore
\begin{align*}
    U^{\gamma}(\bm{x};\bm{q}) &:= \sum_{j=1}^{J}\sum_{j'=1}^{J}U_{j,j'}^{\gamma}(\bm{x}^{j};\bm{q}^{j'})\\
    &= \sum_{j=1}^J\sum_{r=1}^{\alpha_{\ell j}}\sum_{j'=1}^J \biggl[\int_{\mathbb{R}^d}u_{j,j'}(x^{(j)}_{r},x)\mu_{t^-}^{\zeta, j'}(d x)-\sum_{r'=1}^{\alpha_{\ell j'}}\frac{1}{\gamma}u_{j,j'}(x^{(j)}_{r},x^{(j')}_{r'})\biggr],
\end{align*}
demonstrating that the two-body interactions between a set of substrates and non-reactant particles can be written solely in terms of the substrate positions and the components of $\vmu_{t^-}^{\zeta}$.

We analogously denote by $U^{\gamma}(\bm{y};\bm{q})$ the total two-body interactions between the products at $\bm{y} \in \mathbb{Y}^{(\ell)}$ and the non-substrate and non-product particles at $\bm{q}$. To specify particle types, we let $U_{j,j'}^{\gamma}(\bm{y}^{j};\bm{q}^{j'})$ denote all pairwise interactions between the products of type $j$ sampled from $\bm{y}^{j} = (y_{1}^{(j)},\cdots,y_{\beta_{\ell j}}^{(j)})$ and the non-substrate and non-product particles of species $j'$ sampled from $\bm{q}^{j'}$, with $j, j'=1,\cdots,J$. We then obtain
\begin{align*}
    U^{\gamma}(\bm{y};\bm{q}) &:= \sum_{j=1}^{J}\sum_{j'=1}^{J}U_{j,j'}^{\gamma}(\bm{y}^{j};\bm{q}^{j'})\\
    &= \sum_{j=1}^J\sum_{r=1}^{\beta_{\ell j}}\sum_{j'=1}^J \biggl[\int_{\mathbb{R}^d}u_{j,j'}(y^{(j)}_{r},x)\mu_{t^-}^{\zeta, j'}(d x)-\sum_{r'=1}^{\alpha_{\ell j'}}\frac{1}{\gamma}u_{j,j'}(y^{(j)}_{r},x^{(j')}_{r'})\biggr],
\end{align*}
demonstrating we can write these two-body interactions in terms of the substrate positions, the product positions, and the \emph{pre-reaction} state measure, $\vmu_{t^-}^{\zeta}$.

With the preceding definitions, we can represent the total potential for the system before $\mathcal{R}_{\ell}$ for $\bm{x} \in \mathbb{X}^{(\ell)}$ by
\begin{align*}
\Phi_{\ell}^{-,\gamma}(\bm{x},\bm{q})&= \Phi^{\gamma}(\bm{q}) + V(\bm{x}) + U^{\gamma}(\bm{x};\bm{q})+  U^{\gamma}(\bm{x})
\end{align*}
and the total potential for the system after $\mathcal{R}_{\ell}$ for $\bm{y} \in \mathbb{Y}^{(\ell)}$ by
\begin{align*}
   \Phi_{\ell}^{+,\gamma}(\bm{y},\bm{q})
   &= \Phi^{\gamma}(\bm{q}) + V(\bm{y}) + U^{\gamma}(\bm{y};\bm{q})+  U^{\gamma}(\bm{y}).
\end{align*}
\begin{remark} \label{rmk:potentialwithmeasures}
    Examining the preceding definitions, we see that we could alternatively write the potentials as functions of $\bm{x}$, $\bm{y}$, and $\vmu^{\zeta}_{t^-}$, i.e., $\Phi_{\ell}^{-,\gamma}(\bm{x},\vmu^{\zeta}_{t^-})$ and $\Phi_{\ell}^{+,\gamma}(\bm{y},\vmu^{\zeta}_{t^-}; \bm{x})$ respectively. We will subsequently make use of this representation to extend the pre-limit Fr\"{o}hner-No\'e acceptance probabilities to be functions of general finite measures in the Appendix. We use the $\bm{q}$ notation in this section as it is more consistent with how potentials are written in the modeling literature.
\end{remark}

For reversible reactions with differing numbers of substrates and products, for instance the reversible reactions $A+B \rightleftarrows C$ and $A+A \rightleftarrows B$, we denote the acceptance probability for the binding reaction $\mathcal{R}_1$ with $\bm{x} \in \mathbb{X}^{(1)}$ and $\bm{y} \in \mathbb{Y}^{(1)}$ by
\begin{align}\label{pduf}
\pi_{1}^{\gamma} \big(\bm{y}|\bm{x},\vmu_{t}^{\zeta}(dx')\bigr)= \min \left\{1, e^{-\bigl[\Phi_1^{+,\gamma}(\bm{y},\bm{q})-\bigl(\Phi_1^{-,\gamma}(\bm{x},\bm{q})-U^{\gamma}(\bm{x})\bigr)\bigr]}\right\},
\end{align}
and for the unbinding reaction $\mathcal{R}_2$ with $\bm{x} \in \mathbb{X}^{(2)}$ and $\bm{y} \in \mathbb{Y}^{(2)}$ by
\begin{align} \label{pdub}
\pi_{2}^{\gamma} \big(\bm{y}|\bm{x},\vmu_{t}^{\zeta}(dx')\bigr)= \min \left\{1, e^{-\bigl[\bigl(\Phi_2^{+,\gamma}(\bm{y},\bm{q})-U^{\gamma}(\bm{y})\bigr)-\Phi_2^{-,\gamma}(\bm{x},\bm{q})\bigr]}\right\},
\end{align}
in order to satisfy the detailed balance condition and preserve symmetry for the reversible reaction~\cite{FrohnerNoe2018,IsaacsonDriftDB}.

For other allowable reversible reaction types such as the reversible reactions $A+B \rightleftarrows C+D$ and $A+B \rightleftarrows A+C$ with products always placed at the positions of the substrates, we do not subtract the specific pairwise potential term $U^{\gamma}(\bm{x})$ between the substrates at $\bm{x} \in \mathbb{X}^{(1)}$ from the total potential energy $\Phi_{1}^{-,\gamma}(\bm{x},\bm{q})$ in the system prior to the forward reaction, nor subtract the specific pairwise potential term $U^{\gamma}(\bm{y})$ between the products at $\bm{y} \in \mathbb{Y}^{(2)}$ from the total potential energy $\Phi_{2}^{+,\gamma}(\bm{y},\bm{q})$ in the system after the backward reaction, for the detailed balance condition to hold. Instead, we consider the acceptance probability of the form
\begin{align} \label{pd}
\pi_{\ell}^{\gamma} \big(\bm{y}|\bm{x},\vmu_{t}^{\zeta}(dx')\bigr)= \min \left\{1, e^{-\bigl((\Phi_{\ell}^{+,\gamma}(\bm{y},\bm{q})-\Phi_{\ell}^{-,\gamma}(\bm{x},\bm{q})\bigr)}\right\},
\end{align}
with $\bm{x} \in \mathbb{X}^{(\ell)}$ and $\bm{y} \in \mathbb{Y}^{(\ell)}$, see~\cite{FrohnerNoe2018,IsaacsonDriftDB}.

When $\zeta$ goes to zero, the pairwise potentials $U^{\gamma}(\bm{x})$ between substrates at $\bm{x} \in \mathbb{X}^{(\ell)}$ and $U^{\gamma}(\bm{y})$ between the products at $\bm{y} \in \mathbb{Y}^{(\ell)}$ both vanish. The total two-body interactions between the substrates (products) and the non-reactant particles with $\bm{x} \in \mathbb{X}^{(\ell)}$ and $\bm{y} \in \mathbb{Y}^{(\ell)}$ are then
\begin{equation*}
    U(\bm{x}; \vxi_t) := \lim_{\zeta \to 0} U^\gamma(\bm{x}; \bm{q})
    = \sum_{j=1}^J\sum_{r=1}^{\alpha_{\ell j}}\sum_{j'=1}^J\int_{\mathbb{R}^d}u_{j,j'}(x^{(j)}_{r},x)\xi_{t}^{j'}(d x),
\end{equation*}
and
\begin{equation*}
    U(\bm{y}; \vxi_t) := \lim_{\zeta \to 0} U^{\gamma} (\bm{y};\bm{q})
    = \sum_{j=1}^J\sum_{r=1}^{\beta_{\ell j}}\sum_{j'=1}^J\int_{\mathbb{R}^d}u_{j,j'}(y^{(j)}_{r},x)\xi_{t}^{j'}(d x),
\end{equation*}
where $\xi_t^{j}$ and $\xi_t^{j'}$  denote the corresponding large population limits of $\mu_t^{\zeta,j}$ and $\mu_t^{\zeta,j'}$ for $j,j' = 1,\cdots J$. Substituting in these formulas, we obtain that the mean field limits of the three forms of acceptance probabilities coincide. More specifically, the mean field limit of $\Phi_{\ell}^{-,\gamma}(\bm{x},\bm{q})$ for $\bm{x} \in \mathbb{X}^{(\ell)}$ is
\begin{align*}
\Phi_{\ell}^{-}(\bm{x}, \vxi_t)&:= \sum_{j=1}^{J}\sum_{r=1}^{\alpha_{\ell j}}v_j(x^{(j)}_{r})+\sum_{j=1}^J\sum_{r=1}^{\alpha_{\ell j}} \sum_{j'=1}^J\int_{\mathbb{R}^d}u_{j,j'}(x^{(j)}_{r},x)
\xi_{t}^{j'}(d x),
\end{align*}
and the mean field limit of $\Phi_{\ell}^{+,\gamma}(\bm{y},\bm{q})$ for $\bm{y} \in \mathbb{Y}^{(\ell)}$ is
\begin{equation*}
       \Phi_{\ell}^{+}(\bm{y}, \vxi_t ) := \sum_{j=1}^{J}\sum_{r=1}^{\beta_{\ell j}}v_j(y_{r}^{(j)})+\sum_{j=1}^J\sum_{r=1}^{\beta_{\ell j}}\sum_{j'=1}^J\int_{\mathbb{R}^d}u_{j,j'}(y_r^{(j)},x)\xi_{t}^{j'}(d x).
\end{equation*}
As such, the mean field limit of all forms of the acceptance probabilities, 
$\pi_{\ell}^{\gamma} \big(\bm{y}|\bm{x},\vmu_{t^-}^{\zeta}(dx')\bigr)$, are
\begin{equation}\label{pdmfl}
\pi_{\ell} \big(\bm{y}|\bm{x},\vxi_{t}(dx')\bigr)= \min \left\{1, e^{-\bigl((\Phi_{\ell}^{+}(\bm{y},\vxi_t)-\Phi_{\ell}^{-}(\bm{x},\vxi_t)\bigr)}\right\},
\end{equation}
with $\bm{x} \in \mathbb{X}^{(\ell)}, \bm{y} \in \mathbb{Y}^{(\ell)}, \{\vmu_{t}^{\zeta}\}_{t \in[0, T]} \in \mathbb{D}_{\otimes_{j=1}^{J} M_{F}(\mathbb{R}^{d})}([0, T])$ and $\{\vxi_{t}\}_{t \in [0, T]} \in C_{\otimes_{j=1}^{J} M_{F}(\mathbb{R}^{d})}([0, T])$.

\begin{remark}
The total potential between the non-substrate and non-product particles $\Phi^{\gamma}(\bm{q})$ never appears in the formulae \eqref{pduf}, \eqref{pdub}, and \eqref{pd} of the acceptance probability, as this term remains the same before and after the reaction and vanishes in the potential difference.
\end{remark}

Previously, the position vectors $\bm{x}$ and $\bm{y}$ referred to locations of the substrate and product particles in the $\ell$th reaction. We now directly compare the forward and backward directions in one reversible reaction cycle where a set of substrates at $\bm{x}$ are replaced by a set of products at $\bm{y}$ and vice versa, with the non-reactant particles at $\bm{q}$. As such, in the forward reaction $\mathcal{R}_1$, the substrates are placed at $\bm{x} \in \mathbb{X}^{(1)}$ with the products located at $\bm{y} \in \mathbb{Y}^{(1)}$, and in the backward reaction $\mathcal{R}_2$, the substrates are placed at $\bm{y} \in \mathbb{X}^{(2)}$ while the products are located at $\bm{x} \in \mathbb{Y}^{(2)}$. In this case, we have that
\begin{align*}
    \Phi_1^{-,\gamma}(\bm{x},\bm{q}) &= \Phi_2^{+,\gamma}(\bm{x},\bm{q}),\\
    \Phi_1^{+,\gamma}(\bm{y},\bm{q}) &= \Phi_2^{-,\gamma}(\bm{y},\bm{q}),
\end{align*}
where $\bm{x} \in \mathbb{X}^{(1)} = \mathbb{Y}^{(2)}$ and $\bm{y} \in \mathbb{Y}^{(1)}=\mathbb{X}^{(2)}$.

\begin{remark}\label{R:PlacementDensityGammaDepencence}
For such reversible reactions where the number of substrates and products differs, for example $A+B \rightleftarrows C$ and $A+A \rightleftarrows B$, the placement densities for $\mathcal{R}_2$ may be chosen to take the form of $m_{2}^{\zeta}(\bm{x}|\bm{y}) \coloneqq \frac{1}{Z^{\zeta}}m_{1}^{\eta}(\bm{y}|\bm{x})e^{-U^{\gamma}(\bm y)}$, which converges to $m_{2}(\bm{x}|\bm{y}) \coloneqq \frac{1}{Z}m_{1}(\bm y | \bm x)$ in the mean field limit as $\zeta \rightarrow 0$, where $Z^{\zeta}$ and $Z$ are the normalizing constants. We provide specific formulations of the placement densities for the reversible $A+B \rightleftarrows C$ reaction in Example \ref{E:ABC} below. We again have $\bm{x} \in \mathbb{X}^{(1)} = \mathbb{Y}^{(2)}$ and $\bm{y} \in \mathbb{Y}^{(1)}=\mathbb{X}^{(2)}$. See \cite{FrohnerNoe2018} and \cite{IsaacsonDriftDB} for more details.
\end{remark}

\subsection{Examples.}\label{SS:ExamplesNoeModel}
\begin{example}\label{E:ABC}
Consider a system with three species, $A$, $B$, and $C$ that can undergo the reversible reaction $A+B \rightleftarrows C$. Let $\mathcal{R}_{1}$ be the forward reaction $A+B \rightarrow C$, where one $A$ particle at position $x$ and one $B$ particle at position $y$ bind to generate one $C$ particle at position $z$. Assume the non-reactant particles that are unchanged by the reaction are located at $\bm{q}$ as in the last section.

We denote the specific two-body interaction between the substrates by
\begin{equation*}
    U^{\gamma}(x,y) = u_{1,2}^{\gamma}(x,y)=\frac{1}{\gamma}u_{1,2}(x,y),
\end{equation*}
and represent the total potential of the system before $\mathcal{R}_1$ occurs by
\begin{align*}
\Phi_1^{-,\gamma}(x,y,\bm{q})=& \Phi^{\gamma}(\bm{q})+  v_1(x)+v_2(y)
   +\sum_{j=1}^{3}\int_{\mathbb{R}^d}u_{1,j}(x,y')\mu_{s^-}^{\zeta, j}(d y')-\frac{1}{\gamma}u_{1,1}(x,x)\\
   +&\sum_{j=1}^{3}\int_{\mathbb{R}^d}u_{2,j}(y,x') \mu_{s^-}^{\zeta, j}(d x')-\frac{1}{\gamma}u_{2,1}(y,x)-\frac{1}{\gamma}u_{2,2}(y,y),
\end{align*}
where $\vmu_{s^-}^{\zeta}$ represents the system's state before an $\mathcal{R}_1$ reaction at time $s$ (i.e., with an $A$ particle that will react at $x$, a $B$ particle that will react at $y$, and non-reactant particles at $\bm{q}$). The total potential energy of the system after $\mathcal{R}_1$ is denoted by
\begin{align*}
   \Phi_1^{+,\gamma}(z,\bm{q})=& \Phi^{\gamma}(\bm{q})+ v_3(z)+\sum_{j=1}^{3}\int_{\mathbb{R}^d}u_{3,j}(z,x')\mu_{s^-}^{\zeta, j}(d x')-\frac{1}{\gamma}u_{3,1}(z,x)-\frac{1}{\gamma}u_{3,2}(z,y),
\end{align*}
where we have written the potential in terms of the pre-reaction state, $\vmu^{\zeta}_{s^-}$. The acceptance probability for $\mathcal{R}_1$ is thus
\begin{equation*}
   \pi_{1}^{\gamma} \big(z|x,y,\vmu_{s}^{\zeta}(dx')\bigr) = \min \left\{1, e^{-\bigl[\Phi_1^{+,\gamma}(z,\bm{q})-\bigl(\Phi_1^{-,\gamma}(x,y,\bm{q})-u_{1,2}^{\gamma}(x,y)\bigr)\bigr]}\right\}.
\end{equation*}
The mean field limit of
$\Phi_1^{-,\gamma}(x,y,\bm{q})$ is
\begin{equation*}
    \Phi_1^{-}(x, y,\vxi_{s}) := v_1(x)+v_2(y)
   +\sum_{j=1}^{3}\int_{\mathbb{R}^d}u_{1,j}(x,y')\xi_{s}^{j}(d y')+\sum_{j=1}^{3}\int_{\mathbb{R}^d}u_{2,j}(y,x') \xi_{s}^{j}(d x'),
\end{equation*}
and the mean field limit of $\Phi_1^{+,\gamma}(z,\bm{q})$ is
\begin{equation*}
  \Phi_1^{+}(z,\vxi_s) := v_3(z)+\sum_{j=1}^{3}\int_{\mathbb{R}^d}u_{3,j}(z,x')\xi_{s}^{j}(d x').
\end{equation*}
Note that the pairwise potential term $U^{\gamma}(x,y)=\frac{1}{\gamma}u_{1,2}(x,y)$ between the substrates converges to zero as $\gamma \rightarrow \infty$.
Therefore, the mean field limit of the acceptance probability for $\mathcal{R}_1$ simplifies to
\begin{equation*}
\pi_{1} \big(z|x,y,\vxi_{s}(dx')\bigr)= \min \left\{1, e^{-\bigl((\Phi_1^{+}(z,\vxi_s)-\Phi_1^{-}(x,y,\vxi_s)\bigr)}\right\}.
\end{equation*}

Let $\mathcal{R}_{2}$ denote the backward reaction $C \rightarrow A+B$, where one $C$ particle at position $z$ unbinds to generate one $A$ particle at position $x$ and one $B$ particle at position $y$ and the non-reactant particles are again assumed to be at $\bm{q}$. Letting $\vmu_{s^-}^{\zeta}$ represent the state before the $\mathcal{R}_2$ reaction, with a substrate $C$ particle at $z$ and the remaining non-reactant particles at $\bm{q}$, the total potential of the system before $\mathcal{R}_2$ is defined to be
\begin{align*}
\Phi_2^{-,\gamma}(z,\bm{q})&= \Phi^{\gamma}(\bm{q}) +v_3(z)
   +\sum_{j=1}^{3}\int_{\mathbb{R}^d}u_{3,j}(z,x')\mu_{s^-}^{\zeta, j}(d x')-\frac{1}{\gamma}u_{3,3}(z,z),
\end{align*}
and we denote the total potential energy of the system after $\mathcal{R}_2$ by
\begin{align*}
   \Phi_2^{+,\gamma}(x,y,\bm{q})&= \Phi^{\gamma}(\bm{q})+ v_1(x)+v_2(y)+\sum_{j=1}^{3}\int_{\mathbb{R}^d}u_{1,j}(x,y')\mu_{s^-}^{\zeta, j}(d y')-\frac{1}{\gamma}u_{1,3}(x,z)\\
   &+\sum_{j=1}^{3}\int_{\mathbb{R}^d}u_{2,j}(y,x')\mu_{s^-}^{\zeta, j}(d x')-\frac{1}{\gamma}u_{2,3}(y,z)+\frac{1}{\gamma}u_{1,2}(x,y).
\end{align*}
The acceptance probability for $\mathcal{R}_2$ is thus
\begin{equation*}
   \pi_{2}^{\gamma} \big(x,y|z,\vmu_{s}^{\zeta}(dx')\bigr) = \min \left\{1, e^{-\bigl[\bigl(\Phi_2^{+,\gamma}(x,y,\bm{q})-u_{1,2}^{\gamma}(x,y)\bigr)-\Phi_2^{-,\gamma}(z,\bm{q})\bigr]}\right\},
\end{equation*}
so that the mean field limit of $\Phi_2^{-,\gamma}(z,\bm{q})$ is
\begin{equation*}
    \Phi_2^{-}(z,\vxi_s) := v_3(z) +\sum_{j=1}^{3}\int_{\mathbb{R}^d}u_{3,j}(z,x')\xi_{s}^{j}(d x'),
\end{equation*}
and the mean field limit of $\Phi_2^{+,\gamma}(x,y,\bm{q})$ is
\begin{equation*}
  \Phi_2^{+}(x,y,\xi_s) := v_1(x)+v_2(y)+\sum_{j=1}^{3}\int_{\mathbb{R}^d}u_{1,j}(x,y')\xi_{s}^{j}(d y')+\sum_{j=1}^{3}\int_{\mathbb{R}^d}u_{2,j}(y,x')\xi_{s}^{j}(d x').
\end{equation*}
As the pairwise potential term $U^{\gamma}(x,y)$ between the products converges to zero as $\gamma \rightarrow \infty$, the mean field limit of the acceptance probability for $\mathcal{R}_2$ is
\begin{equation*}
\pi_{2} \big(x,y|z,\vxi_{s}(dx')\bigr)= \min \left\{1, e^{-\bigl((\Phi_2^{+}(x,y,\vxi_s)-\Phi_2^{-}(z,\vxi_s)\bigr)}\right\}.
\end{equation*}

Finally, for completeness we give specific formulas for the placement densities of the reversible reaction $A+B \rightleftarrows C$ that are consistent with detailed balance holding, see \cite{IsaacsonDriftDB}. For simplicity, we let the placement density $m_{1}^{\eta}(z|x,y)$ for $\mathcal{R}_1$ take the form
\begin{align*}
    m_{1}^{\eta}(z|x,y) &= G_{\eta}\biggl(z - \bigr(\alpha x+(1-\alpha)y\bigr)\biggr),
    \end{align*}
    and as $\eta \rightarrow 0, m_{1}^{\eta}(z|x,y)$ then converges to
    \begin{align*}
         m_{1}(z|x,y) &= \delta\biggl(z - \bigr(\alpha x+(1-\alpha)y\bigr)\biggr),
    \end{align*}
   for $\alpha \in [0,1]$.
To ensure detailed balance of reaction fluxes at equilibrium, see \cite{IsaacsonDriftDB}, and also maintain symmetry for reversible reactions, we then chose
\begin{align*}
    m_{2}^{\zeta}(x,y|z)&= \frac{1}{Z_{1,2}^{\eta}} \mathbf{1}_{B_{\varepsilon}(\bm{0})}(x-y) G_{\eta}\biggl(z - \bigl(\alpha x+(1-\alpha)y\bigr)\biggr) e^{-u_{1,2}^{\gamma}(x,y)},
\end{align*}
where
\begin{align*}
    Z_{1,2}^{\eta} =
    \int_{\mathbb{R}^{2d}}\mathbf{1}_{B_{\varepsilon}(\bm{0})}(x-y) G_{\eta}\biggl(z - \bigl(\alpha x+(1-\alpha)y\bigr)\biggr) e^{-u_{1,2}^{\gamma}(x,y)} dxdy.
\end{align*}
$m_{2}^{\zeta}(x,y|z)$ then converges as a distribution to $m_{2}(x,y|z) \coloneqq \frac{1}{Z_{1,2}}\mathbf{1}_{B_{\varepsilon}(\bm{0})}(x-y)m_{1}(z|x,y)$ in the mean field limit as $\zeta \rightarrow 0$ with
\begin{align*}
    Z_{1,2}
    &= B_{\varepsilon}(\bm{0}).
\end{align*}
\end{example}

\begin{example}
Consider a system with four species, $A, B, C$ and $D$ that can undergo the reversible reaction $A+B \rightleftarrows C+D$. Let $\mathcal{R}_{1}$ be the forward reaction $A+B \rightarrow C+D$, where one $A$ particle at position $x$ and one $B$ particle at position $y$ react to generate one $C$ particle at position $w$ and one $D$ particle at position $z$.

Analogous to the last section, we represent the total potential energy of the system before $\mathcal{R}_1$ by
\begin{align*}
\Phi_1^{-,\gamma}(x,y,\bm{q})=& \Phi^{\gamma}(\bm{q})+v_1(x)+v_2(y)
   +\sum_{j=1}^{4}\int_{\mathbb{R}^d}u_{1,j}(x,y')\mu_{s^-}^{\zeta, j}(d y')-\frac{1}{\gamma}u_{1,1}(x,x)\\
   +&\sum_{j=1}^{4}\int_{\mathbb{R}^d}u_{2,j}(y,x') \mu_{s^-}^{\zeta, j}(d x')-\frac{1}{\gamma}u_{2,1}(y,x)-\frac{1}{\gamma}u_{2,2}(y,y),
\end{align*}
and the total potential energy of the system after $\mathcal{R}_1$ by
\begin{align*}
   \Phi_1^{+,\gamma}(w,z,\bm{q})&=\Phi^{\gamma}(\bm{q})+ v_3(w)+v_4(z)+\sum_{j=1}^{4}\int_{\mathbb{R}^d}u_{3,j}(w,x')\mu_{s^-}^{\zeta, j}(d x')-\frac{1}{\gamma}u_{3,1}(w,x)-\frac{1}{\gamma}u_{3,2}(w,y)\\
   &+\sum_{j=1}^{4}\int_{\mathbb{R}^d}u_{4,j}(z,x')\mu_{s^-}^{\zeta, j}(d x')-\frac{1}{\gamma}u_{4,1}(z,x)-\frac{1}{\gamma}u_{4,2}(z,y).
\end{align*}
The acceptance probability for $\mathcal{R}_1$ is thus
\begin{equation*}
   \pi_{1}^{\gamma} \big(w,z|x,y,\vmu_{s}^{\zeta}(dx')\bigr) = \min \left\{1, e^{-\bigl[\Phi_1^{+,\gamma}(w,z,\bm{q})-\Phi_1^{-,\gamma}(x,y,\bm{q})\bigr]}\right\}.
\end{equation*}
The mean field limit of $\Phi_1^{-,\gamma}(x,y,\bm{q})$ is
\begin{equation*}
    \Phi_1^{-}(x, y,\vxi_s) := v_1(x)+v_2(y)
   +\sum_{j=1}^{4}\int_{\mathbb{R}^d}u_{1,j}(x,y')\xi_{s}^{j}(d y')+\sum_{j=1}^{4}\int_{\mathbb{R}^d}u_{2,j}(y,x') \xi_{s}^{j}(d x'),
\end{equation*}
and the mean field limit of $\Phi_1^{+,\gamma}(w,z,\bm{q})$ is
\begin{equation*}
  \Phi_1^{+}(w,z,\vxi_s) := v_3(w)+v_4(z)+\sum_{j=1}^{4}\int_{\mathbb{R}^d}u_{3,j}(w,x')\xi_{s}^{j}(d x')+\sum_{j=1}^{4}\int_{\mathbb{R}^d}u_{4,j}(z,x')\xi_{s}^{j}(d x').
\end{equation*}
Therefore, the mean field limit of the acceptance probability for $\mathcal{R}_1$ simplifies to
\begin{equation*}
\pi_{1} \big(w,z|x,y,\vxi_{s}(dx')\bigr)= \min \left\{1, e^{-\bigl((\Phi_1^{+}(w,z,\vxi_s)-\Phi_1^{-}(x,y,\vxi_s)\bigr)}\right\}.
\end{equation*}

Let $\mathcal{R}_{2}$ be the backward reaction $C+D \rightarrow A+B$, where one $C$ particle at position $w$ and one $D$ particle at position $z$ react to generate one $A$ particle at position $x$ and one $B$ particle at position $y$. The total potential energy of the system before $\mathcal{R}_2$ is defined to be
\begin{align*}
\Phi_2^{-,\gamma}(w,z,\bm{q})&=\Phi^{\gamma}(\bm{q})+ v_3(w)+v_4(z)
   +\sum_{j=1}^{4}\int_{\mathbb{R}^d}u_{3,j}(w,x')\mu_{s^-}^{\zeta, j}(d x')-\frac{1}{\gamma}u_{3,3}(w,w)\\ &+\sum_{j=1}^{4}\int_{\mathbb{R}^d}u_{4,j}(z,x')\mu_{s^-}^{\zeta, j}(d x')-\frac{1}{\gamma}u_{4,3}(z,w)-\frac{1}{\gamma}u_{4,4}(z,z),
\end{align*}
and we denote the total potential energy of the system after $\mathcal{R}_2$ by
\begin{align*}
   \Phi_2^{+,\gamma}(x,y,\bm{q})=&\Phi^{\gamma}(\bm{q})+ v_1(x)+v_2(y)+\sum_{j=1}^{4}\int_{\mathbb{R}^d}u_{1,j}(x,y')\mu_{s^-}^{\zeta, j}(d y')-\frac{1}{\gamma}u_{1,3}(x,w)-\frac{1}{\gamma}u_{1,4}(x,z)\\
   &+\sum_{j=1}^{4}\int_{\mathbb{R}^d}u_{2,j}(y,x')\mu_{s^-}^{\zeta, j}(d x')-\frac{1}{\gamma}u_{2,3}(y,w)-\frac{1}{\gamma}u_{2,4}(y,z)+\frac{1}{\gamma}u_{1,2}(x,y).
\end{align*}
The acceptance probability for $\mathcal{R}_2$ is thus
\begin{equation*}
   \pi_{2}^{\gamma} \big(x,y|w,z,\vmu_{s}^{\zeta}(dx')\bigr) = \min \left\{1, e^{-\bigl[\Phi_2^{+,\gamma}(x,y,\bm{q})-\Phi_2^{-,\gamma}(w,z,\bm{q})\bigr]}\right\}.
\end{equation*}
The mean field limit of $\Phi_2^{-,\gamma}(z,\bm{q})$ is
\begin{equation*}
    \Phi_2^{-}(w,z,\vxi_s) := v_3(w)+v_4(z)
   +\sum_{j=1}^{4}\int_{\mathbb{R}^d}u_{3,j}(w,x')\xi_{s}^{j}(d x') +\sum_{j=1}^{4}\int_{\mathbb{R}^d}u_{4,j}(z,x')\xi_{s}^{j}(d x'),
\end{equation*}
and the mean field limit of $\Phi_2^{+,\gamma}(x,y,\bm{q})$ is
\begin{equation*}
  \Phi_2^{+}(x,y,\vxi_s) := v_1(x)+v_2(y)+\sum_{j=1}^{4}\int_{\mathbb{R}^d}u_{1,j}(x,y')\xi_{s}^{j}(d y')+\sum_{j=1}^{4}\int_{\mathbb{R}^d}u_{2,j}(y,x')\xi_{s}^{j}(d x').
\end{equation*}
Therefore, the mean field limit of the acceptance probability for $\mathcal{R}_2$ is
\begin{equation*}
\pi_{2} \big(x,y|w,z,\vxi_{s}(dx')\bigr)= \min \left\{1, e^{-\bigl((\Phi_2^{+}(x,y,\vxi_s)-\Phi_2^{-}(w,z,\vxi_s)\bigr)}\right\}.
\end{equation*}

Finally, we note that the placement densities, $m_{1}^{\eta}(w,z|x,y)$ and $m_{2}^{\eta}(x,y|w,z)$, and their respective mean field limits, $m_{1}(w, z|x,y)$ and $m_{2}(x,y|w,z)$, take the same forms as in Assumption \ref{A:PlacementDensity}.

\end{example}

\begin{example}
Consider a system with two species, $A$ and $B$ that can undergo the reversible reaction $A+A \rightleftarrows B$. Let $\mathcal{R}_{1}$ be the forward reaction $A+A \rightarrow B$, where two $A$ particles at $x$ and $y$ bind to generate one $B$ particle at position $z$.

Denote the specific two-body interaction between the substrates by
\begin{equation*}
    U^{\gamma}(x,y) = u_{1,1}^{\gamma}(x,y)=\frac{1}{\gamma}u_{1,1}(x,y).
\end{equation*}
We represent the total potential energy of the system before $\mathcal{R}_1$ by
\begin{align*}
\Phi_1^{-,\gamma}(x,y,\bm{q})=& \Phi^{\gamma}(\bm{q})+ v_1(x)+v_1(y)
   +\sum_{j=1}^{2}\int_{\mathbb{R}^d}u_{1,j}(x,y')\mu_{s^-}^{\zeta, j}(d y')-\frac{1}{\gamma}u_{1,1}(x,x)-\frac{1}{\gamma}u_{1,1}(x,y)\\
   +&\sum_{j=1}^{2}\int_{\mathbb{R}^d}u_{1,j}(y,x') \mu_{s^-}^{\zeta, j}(d x')-\frac{1}{\gamma}u_{1,1}(y,y),
\end{align*}
and the total potential energy of the system after $\mathcal{R}_1$ by
\begin{align*}
   \Phi_1^{+,\gamma}(z,\bm{q})=&\Phi^{\gamma}(\bm{q})+ v_2(z)+\sum_{j=1}^{2}\int_{\mathbb{R}^d}u_{2,j}(z,x')\mu_{s^-}^{\zeta, j}(d x')-\frac{1}{\gamma}u_{2,1}(z,x)-\frac{1}{\gamma}u_{2,1}(z,y).
\end{align*}
The acceptance probability for $\mathcal{R}_1$ is thus
\begin{equation*}
   \pi_{1}^{\gamma} \big(z|x,y,\vmu_{s}^{\zeta}(dx')\bigr) = \min \left\{1, e^{-\bigl[\Phi_1^{+,\gamma}(z,\bm{q})-\bigl(\Phi_1^{-,\gamma}(x,y,\bm{q})-u_{1,1}^{\gamma}(x,y)\bigr)\bigr]}\right\}.
\end{equation*}
The mean field limit of $\Phi_1^{-,\gamma}(x,y,\bm{q})$ is
\begin{equation*}
    \Phi_1^{-}(x, y,\vxi_s) := v_1(x)+v_1(y)
   +\sum_{j=1}^{2}\int_{\mathbb{R}^d}u_{1,j}(x,y')\xi_{s}^{j}(d y')+\sum_{j=1}^{2}\int_{\mathbb{R}^d}u_{1,j}(y,x') \xi_{s}^{j}(d x'),
\end{equation*}
and the mean field limit of $\Phi_1^{+,\gamma}(z,\bm{q})$ is
\begin{equation*}
  \Phi_1^{+}(z,\vxi_s) := v_2(z)+\sum_{j=1}^{2}\int_{\mathbb{R}^d}u_{2,j}(z,x')\xi_{s}^{j}(d x').
\end{equation*}
The pairwise potential term $U^{\gamma}(x,y)$ between the substrates converges to zero as $\gamma \rightarrow \infty$. Therefore, the mean field limit of the acceptance probability for $\mathcal{R}_1$ simplifies to
\begin{equation*}
\pi_{1} \big(z|x,y,\vxi_{s}(dx')\bigr)= \min \left\{1, e^{-\bigl((\Phi_1^{+}(z,\vxi_s)-\Phi_1^{-}(x,y,\vxi_s)\bigr)}\right\}.
\end{equation*}

Let $\mathcal{R}_{2}$ be the backward reaction $B \rightarrow A+A$, where one $B$ particle at position $z$ unbinds to generate two $A$ particles at $x$ and $y$. The total potential energy of the system before $\mathcal{R}_2$ is defined to be
\begin{align*}
\Phi_2^{-,\gamma}(z,\bm{q})=&\Phi^{\gamma}(\bm{q})+ v_2(z)
   +\sum_{j=1}^{2}\int_{\mathbb{R}^d}u_{2,j}(z,x')\mu_{s^-}^{\zeta, j}(d x')-\frac{1}{\gamma}u_{2,2}(z,z),
\end{align*}
and we denote the total potential energy of the system after $\mathcal{R}_2$ by
\begin{align*}
   \Phi_2^{+,\gamma}(x,y,\bm{q})=&\Phi^{\gamma}(\bm{q})+ v_1(x)+v_1(y)+\sum_{j=1}^{2}\int_{\mathbb{R}^d}u_{1,j}(x,y')\mu_{s^-}^{\zeta, j}(d y')-\frac{1}{\gamma}u_{1,2}(x,z)\\
   &+\sum_{j=1}^{2}\int_{\mathbb{R}^d}u_{1,j}(y,x')\mu_{s^-}^{\zeta, j}(d x')-\frac{1}{\gamma}u_{1,2}(y,z)+\frac{1}{\gamma}u_{1,1}(x,y).
\end{align*}
The acceptance probability for $\mathcal{R}_2$ is thus
\begin{equation*}
   \pi_{2}^{\gamma} \big(x,y|z,\vmu_{s}^{\zeta}(dx')\bigr) = \min \left\{1, e^{-\bigl[\bigl(\Phi_2^{+,\gamma}(x,y,\bm{q})-u_{1,1}^{\gamma}(x,y)\bigr)-\Phi_2^{-,\gamma}(z,\bm{q})\bigr]}\right\}.
\end{equation*}
The mean field limit of $\Phi_2^{-,\gamma}(z,\bm{q})$ is
\begin{equation*}
    \Phi_2^{-}(z,\vxi_s) := v_2(z)
   +\sum_{j=1}^{2}\int_{\mathbb{R}^d}u_{2,j}(z,x')\xi_{s}^{j}(d x'),
\end{equation*}
and the mean field limit of $\Phi_2^{+,\gamma}(x,y,\bm{q})$ is
\begin{equation*}
  \Phi_2^{+}(x,y,\vxi_s) := v_1(x)+v_1(y)+\sum_{j=1}^{2}\int_{\mathbb{R}^d}u_{1,j}(x,y')\xi_{s}^{j}(d y')+\sum_{j=1}^{2}\int_{\mathbb{R}^d}u_{1,j}(y,x')\xi_{s}^{j}(d x').
\end{equation*}
The pairwise potential term $U^{\gamma}(x,y)$  between the products converges to zero as $\gamma \rightarrow \infty$. Therefore, the mean field limit of the acceptance probability for $\mathcal{R}_2$ is
\begin{equation*}
\pi_{2} \big(x,y|z,\vxi_{s}(dx')\bigr)= \min \left\{1, e^{-\bigl((\Phi_2^{+}(x,y,\vxi_s)-\Phi_2^{-}(z,\vxi_s)\bigr)}\right\}.
\end{equation*}

Finally, we note that the placement densities, $m_{1}^{\eta}(z|x,y)$ and $m_{2}^{\eta}(x,y|z)$, and their respective mean field limits, $m_{1}(z|x,y)$ and $m_{2}(x,y|z)$, take similar forms as in Example \ref{E:ABC} and Assumption \ref{A:PlacementDensity}.
\end{example}


\section{Simulations}\label{S:Simulations}
We numerically solve the $A+B \rightleftarrows C$ reaction for a periodic one-dimensional system to compare our derived PIDEs and the underlying PBSRDD model. We subsequently call these PIDEs the mean field model (MFM). The MFM is solved using a Fourier spectral method. We discretize the PBSRDD model in space to obtain a (convergent) jump process approximation to the
stochastic process associated with the PBSRDD model via the Convergent Reaction Diffusion Master Equation (CRDME)~\cite{Isaacson2013, IsaacsonZhang2018} using the approach developed for systems with drift due to interaction potentials in~\cite{HeldmanCRDME,HeldmanThesis}.

We first present the model problem in Subsection \ref{S:numerics_example_def} and then discuss the discretization schemes we employed for the MFM and PBSRDD model in the next two subsections. Finally, we present the numerical results and demonstrate that for the total molar mass (i.e., the integral or $L^1$-norm of the molar concentration field) of the type $C$ particles, the mean field process gives an increasingly accurate approximation of the PBSRDD model as $\gamma$ increases. For $\gamma=1000$, the largest value of $\gamma$ that we consider, the means for the two models agree up to statistical error. We also compare with the purely-diffusive case to demonstrate that including drift induced by potential interactions affects the behavior of the system in non-trivial ways.


\subsection{Description of model problem} \label{S:numerics_example_def}
 Our model follows the general form of Example \ref{E:ABC}, with some modifications. We restrict the reaction system to the periodic domain $\Omega = [0,L]$ with $L = 2 \pi$. As in the example, we prescribe harmonic two-body potentials between each pair of particles which we choose as
\begin{equation}
u_{s,s'}(x,y) = u_{s,s'}(|x-y|) = \kappa \max\left\{0,3(r_s + r_{s'}) - |x-y|\right\}^2,
\end{equation}
where $s,s'\in\{A,B,C\}$. Here and throughout this section, $|x-y|$ denotes the periodic distance between $x,y\in\Omega$, i.e.,
\begin{equation*}
  |x-y| = \min\{|x-y|,L-|x-y|\}.
\end{equation*}

 The parameters $r_j$, which control the interaction distance, and $\kappa$, which controls the interaction strength, are chosen large enough to make a clear contrast with the $\kappa = 0$ (i.e., no potentials) case. Note that for simplicity, we do not include single-body potentials in this model.

 The drift-diffusion transport operator for each particle type $s\in\{A,B,C\}$ is also scaled by a diffusion constant $D_s$. We emphasize that this diffusion constant plays a slightly different role than in earlier sections; in particular, it scales \textit{both} the drift and diffusion terms (see \eqref{eq:hoppingrate}
 and \eqref{eq:MFM_PDEs}), and compare the transport operators in the latter system of PIDEs with, e.g., those in \eqref{Eq:ThreeSpeciesReversibleExampleLimit}).

For reactions, we follow Example \ref{E:ABC} except that we replace the Doi reaction kernel $1_{B_\varepsilon(\mathbf{0})}(x - y)$ with a normalized Gaussian
\begin{equation*}
K(x,y) = \frac{1}{Z}\frac{e^{-\frac{|x-y|^2}{2\sigma^2}}}{\sqrt{2\pi\sigma^2}},
\end{equation*}
with $\sigma$ the kernel width and $Z$ a normalization constant,
\begin{equation*}
Z = \frac{1}{\sqrt{2\pi\sigma^2}} \int_{0}^{2 \pi}  e^{-\frac{|x-y|^2}{2\sigma^2}} dx.
\end{equation*}
We also replace the placement density $m_1(z|x,y)$ with the combination of $\delta$-functions: \begin{equation}\label{eq:deltafunctionplacement}m_1(z|x,y) = \frac{1}{2}\delta(x - z) + \frac{1}{2}\delta(y-z),\end{equation} so that, e.g., in the event of an $A+B\to C$ reaction the product $C$ is placed at the location of the $A$ or the $B$ with equal probability $\frac{1}{2}$. The backward reaction placement density is likewise modified to incorporate our changes to the forward placement density and the reaction kernel, i.e., $$m_2(x,y|z) = \frac{1}{Z_{AB}} K(x,y) m_1(z|x,y) e^{-u^\gamma_{A,B}(x,y)},$$ where $$Z_{AB} = \int_{0}^{2\pi} K(x,0)e^{-u_{A,B}^\gamma(x,0)}dx = \int_{0}^{2\pi} K(0,y)e^{-u_{A,B}^\gamma(0,y)}dy,$$ recalling the notation $u^{\gamma}_{s,s'}(x,y) = \frac{u_{s,s'}(x,y)}{\gamma}$ for $s,s'\in\{A,B,C\}$. We note that, in the following simulations, we do not regularize this placement density, i.e., in the actual numerical implementation we work with the $\delta$ function densities directly.

We choose parameters $\lambda$ and $\mu$ which control the relative rates at which forward and backward reactions occur. The rates are chosen so that prior to applying the detailed-balance enforcing rejection-acceptance mechanism, the forward rate for an $A$ at $x\in\Omega$ and a $B$ at $y\in\Omega$ to react is $\lambda K(x,y)$ and the rate for a $C$ at $z$ to unbind is $\mu$ in the particle model.

Finally, we specify the values of the various parameters described above: $L = 2\pi$, $r_A = r_B = 0.05, r_C = 0.1,$ $D_A = D_B=0.25$, $D_C=0.5$, $\sigma = 0.15, \lambda = 1,$ and $\mu = 0.05$. We choose the potential strength parameter $\kappa=200$, also making qualitative comparisons with the pure diffusion ($\kappa=0$) case. Initial conditions are set proportionally to \begin{align*}
    A(x,0) &=e^{-5 |x-0.75\pi|^2},\\
    B(x,0) &= e^{-5|x-1.25\pi|^2},\\
    C(x,0) &= 0,
\end{align*}
where again we use periodic distances.

\subsection{Discretization of particle models.}

To solve the particle model, we use the CRDME, a convergent spatial discretization of the forward Kolmogorov equation associated with the PBSRDD model \cite{Isaacson2013, IsaacsonZhang2018}. The CRDME corresponds to the forward equation for a system of continuous-time jump processes on a mesh, and we therefore simulate the particle system via simulations of these jump processes using optimized versions of the stochastic simulation algorithm (SSA), also known as Gillespie's method or Kinetic Monte Carlo \cite{Gillespie1976}. The PBSRDD particles' Brownian motions are then approximated by continuous-time random walks on a grid and their reactive interactions by jump processes that depend on the relative positions of reactants on the mesh. As discussed in \cite{Isaacson2013, IsaacsonZhang2018, HeldmanThesis, HeldmanCRDME}, statistics obtained from simulations of the CRDME should then converge to those of the underlying PBSRDD model as the mesh spacing is taken to zero.

For these simulations, we use a uniform mesh with nodes $\{x_i\}_{i=1}^N \subseteq \Omega$, where $x_i = (i-1)h$, $i=1,...,N$, and $h = \frac{2\pi}{N}$. We denote the compartments, or voxels, that particles hop between by $V_i = (x_i-\frac{h}{2}\mod 2\pi,x_i+\frac{h}{2})$ for $i = 1,2,\dots,N$. To initialize the particle positions at the start of each simulation, we first choose the number of $A$ and $B$ particles as $\frac{\gamma}{2}$, for $\gamma \in \{50, 100, 150, 200, 250, 350, 500, 1000\}$ . Then, we sample the position of each individual particle from the (unnormalized) discrete distributions $\{A(x_i,0)\}_{i=1}^N$, $\{B(x_i,0)\}_{i=1}^N$. We note that our choice of initial distribution implies that the initial error between the mean field model and the mean of the particle model is zero, up to discretization error.

Let $\xi_s^\gamma$ be concentration measures for the nodal locations of the particles of type $s \in \{A,B,C\}$, so that if there are $s_i$ particles of type $s$ in voxel $V_i$, $$\xi^\gamma_s = \frac{1}{\gamma}\sum_{i=1}^N s_i\delta(x_i - x).$$ Then, the total potential difference induced by a hop of one particle of type $s$ from voxel $V_i$ to voxel $V_j$ is given by:
$$\Delta_{j|i}^{s} = \sum_{s'\in\{A,B,C\}}\biggl(\int_{\mathbb{R}} u_{s,s'}(x_j,y)\xi_s^\gamma(dy) - \int_{\mathbb{R}} u_{s,s'}(x_i,y)\xi_s^\gamma(dy) \biggr)- u_{s,s}^\gamma(x_j,x_i) +  u_{s,s}^\gamma(x_i,x_i),$$
where the latter terms remove self-interactions. The hopping rate for each particle of species $s\in \{A,B,C\}$ in voxel $V_i$ to a neighboring voxel $V_j$ is then given by \begin{equation}\label{eq:hoppingrate}\frac{D_s}{h^2} \frac{\Delta_{j|i}^s}{e^{\Delta_{j|i}^s} - 1}.\end{equation} The above formula can be derived by using a specific combination of quadrature rules on the transport operator appearing in the forward Kolmogorov equation of the particle model, as described in~\cite{HeldmanThesis, HeldmanCRDME}.

Proposal reaction rates between a molecule of type $A$ in voxel $V_i$ and a molecule of type $B$ in voxel $V_j$ are computed as $\lambda K^\gamma(x_i,y_j) = \frac{K(x_i,y_j)}{\gamma}$. The resulting $C$ is placed in voxel $V_i$ or $V_j$ with probability $\frac{1}{2}$ each. We then accept the proposed reaction with probability computed by the formula given in Example \ref{E:ABC}, where $x$, $y$, and $z$ are replaced with the proposed mesh nodes $x_i,y_j,z_k$ and particle positions $\mathbf{q}$ are replaced with mesh nodes associated with their respective voxel sites.

The total rate of proposed unbinding reactions in the CRDME model at each voxel site $V_k$ is given by the discrete marginal integral of the reaction kernel,
\begin{equation*}
\frac{\mu}{Z_{AB}}\sum_{i=1}^N hK(x_i,z_k)e^{-u_{A,B}^\gamma(x_i,z_k)} = \frac{\mu}{Z_{AB}}\sum_{j=1}^N hK(z_k,y_j)e^{-u_{A,B}^\gamma(z_k,y_j)}.
\end{equation*}
We note that for our chosen parameters, the above sum is essentially $\mu$ (i.e., to numerical precision). After the unbinding of a $C$ particle at $V_k$ is proposed, a particle of type $A$ or of type $B$ is tentatively placed in $V_k$ with probability $\frac{1}{2}$. The location of the other product particle is proposed by sampling the (unnormalized) discrete distributions $\{K(x_i,z_k)e^{-u_{A,B}^\gamma(x_i,z_k)}\}_{i=1}^N = \{K(z_k,y_j)e^{-u_{A,B}^\gamma(z_k,y_j)}\}_{j=1}^N$ over the voxel sites. Again, after reactions are proposed using the rates described above, they are accepted or rejected using the mechanism described in Example \ref{E:ABC}, with particle positions localized to mesh nodes.

Specifying the reaction and transport rates as we have just described preserves detailed balance and convergence of the CRDME to the underlying particle model~\cite{HeldmanThesis,HeldmanCRDME}. We note that, although in the simulation methodology described here potentials and reaction rates are always evaluated at mesh nodes, it may improve CRDME convergence in some cases (e.g., discontinuous reaction kernels) to employ other discretization strategies (e.g., involving voxel averages of the reaction kernel as done in~\cite{Isaacson2013, IsaacsonZhang2018}). 

\subsection{Discretization of MFM} Let $\mathbf{S}(x,t) = (A(x,t), B(x,t), C(x,t))$, and define the integral operator $(u*f)(x,t)$ for a function $f(x,t)$ by
\begin{equation*}
  (u * f)(x,t) = \int_{\Omega} u(x,y) f(y,t) \, dy.
\end{equation*}
The MFM for the reversible reaction is given by the system of PIDEs
\begin{equation}\label{eq:MFM_PDEs}
\begin{split}
    \frac{\partial A}{\partial t}(x,t) &= D_A\nabla\cdot \biggl(\nabla A(x,t) + A(x,t)\!\!\!\!\!\!\sum_{S \in \{A,B,C\}} (\nabla u_{A,S}*S)(x,t) \biggr)\\
    &\phantom{=}- \frac{\lambda}{2}A(x, t)\int_{\mathbb{R}^{d}} K(x, y)\left(\pi_{1}\bigl(x|x,y, \mathbf{S}(x',t)dx'\bigr) + \pi_{1}\bigl(y|x,y, \mathbf{S}(x',t)dx'\bigr)\right) B(y,t)dy\\
    &\phantom{=}+ \frac{\mu}{2}C(x, t)\int_{\mathbb{R}^{d}} K(x, y)\pi_{2}\bigl(x,y|x, \mathbf{S}(x',t)dx'\bigr)dy\\
&\phantom{=}+ \frac{\mu}{2}\int_{\mathbb{R}^{d}} K(x, y)\pi_{2}\bigl(x,y|y, \mathbf{S}(x',t)dx'\bigr)C(y, t) d y,\\
    \frac{\partial B}{\partial t}(y,t) &= D_B\nabla\cdot\biggl( \nabla B(y,t) +  B(y,t)\!\!\!\!\!\!\sum_{S \in \{A,B,C\}} (\nabla u_{B,S}*S)(y,t)\biggr)\\
        &\phantom{=}- \frac{\lambda}{2}B(y, t)\int_{\mathbb{R}^{d}} K(x, y)\left(\pi_{1}\bigl(x|x,y, \mathbf{S}(x',t)dx'\bigr) + \pi_{1}\bigl(y|x,y, \mathbf{S}(x',t)dx'\bigr)\right) A(x,t)dx\\
    &\phantom{=}+ \frac{\mu}{2}C(y, t)\int_{\mathbb{R}^{d}} K(x, y)\pi_{2}\bigl(x,y|y, \mathbf{S}(x',t)dx'\bigr)dx\\
    &\phantom{=}+ \frac{\mu}{2}\int_{\mathbb{R}^{d}} K(x, y)\pi_{2}\bigl(x,y|x, \mathbf{S}(x',t)dx'\bigr)C(x, t)dx,\\
    \frac{\partial C}{\partial t}(z,t) &= D_C\nabla\cdot \biggl(\nabla C(z,t) + C(z,t)\!\!\!\!\!\!\sum_{S \in \{A,B,C\}} (\nabla u_{C,S}*S)(z,t)\biggr)\\
    &\phantom{=}+\frac{\lambda}{2}A(z, t) \int_{\mathbb{R}^{d}} K(z, y)\left( \pi_{1}\bigl(z|z,y, \mathbf{S}(x',t)dx'\bigr) \right) B(y, t)dy \\
    &\phantom{=}+\frac{\lambda}{2}B(z, t)\int_{\mathbb{R}^{d}} K(x,z)\pi_{1}\bigl(z|x,z, \mathbf{S}(x',t)dx'\bigr)A(x, t)dx\\
    &\phantom{=}-\frac{\mu}{2}C(z,t) \int_{\mathbb{R}^{d}} \left( K(z, x)\pi_{2}\bigl(z,x|z, \mathbf{S}(x',t)dx'\bigr) + K(x,z)\pi_{2}\bigl(x,z|z, \mathbf{S}(x',t)dx'\bigr) \right) dx
    \end{split}
\end{equation}
where $A$, $B$, and $C$ are the mean field molar concentration fields for the corresponding particle types.  As for the particle model, the acceptance-rejection factors $\pi$ are as described in Example \ref{E:ABC}.

The above PIDEs are solved using a Fourier collocation method \cite{Hesthaven2007} for the spatial discretization, with collocation points $x_i = \frac{iL}{N}, i = 0, \cdots, N-1$. We use a total of $N = 2^9$ basis functions $\{e^{inx}\}_{n=0}^{2^9-1}$ to represent the concentration fields for each of the three species. We convert between Fourier representations and values at collocation points using the \texttt{SciPy} fast Fourier transforms functions, and approximate the drift and reaction integral terms using the trapezoidal rule centered at the collocation points. 

For the resulting spatially discretized reaction-drift-diffusion ODEs, the diffusion and drift terms are stiff whereas the reaction terms are non-stiff. We therefore adopt a one-step implicit-explicit (IMEX) Euler method to solve the system of ODEs arising from our spatial discretization, treating the reaction terms explicitly (forward Euler) and the transport operator implicitly (backward Euler). For the implicit terms, the nonlinear system of equations is solved at each timestep using the \texttt{SciPy} Newton-Krylov solver. To ensure convergence of the nonlinear solver, we dynamically reduce our timestep from an empirically chosen (for accuracy) maximum timestep of $10^{-4}$.

\subsection{Numerical results}
We present the results of our numerical studies in Figure \ref{fig:comparison}. The first three subfigures (top left, top right, and bottom left) each compare the molar mass for the $C$ particles in the prelimit and limiting processes, denoted by $\bar{C}(t)$ for the MFM and $\bar{C}^\gamma(t)$ for the CRDME model, over the time interval $[0,40]$. The bottom-right subfigure compares the corresponding spatial distributions $C(x,t)$ and $C^\gamma(x,t)$ at time $t=4$.

For the MFM, the molar mass $$\bar{C}(t) = \int_{0}^{2\pi} C(x,t)dx$$ is approximated from the spatial distribution $C(x,t)$ at a series of discrete times $t$ by applying the trapezoidal rule to the numerically computed mean field solution. For the CRDME model, we compute $\bar{C}^\gamma(t)$ by first averaging the number of particles in each voxel over 280,000 simulations to obtain $\bar{C}^\gamma_i(t)$, $i=1,...,N$, at a series of discrete times $t$. Then, the concentration fields $C^\gamma(x,t)$ shown in the bottom-right panel of Figure \ref{fig:comparison} can be computed from $\bar{C}^\gamma_i(t)$ as the piecewise-linear interpolant of the grid function $C^\gamma(x_i,t) = \frac{\bar{C}^\gamma_i(t)}{h\gamma}$. Finally, the molar mass $\bar{C}^{\gamma}(t)$ is computed from $C^\gamma(x,t)$ as $$\bar{C}^{\gamma}(t) = \sum_{i=1}^N hC^\gamma(x_i,t).$$

\begin{figure}[ht]
\hspace*{-1cm}
\centering
\begin{subfigure}[b]{.5\textwidth}
  \centering
\centerline{\includegraphics[width=\linewidth]{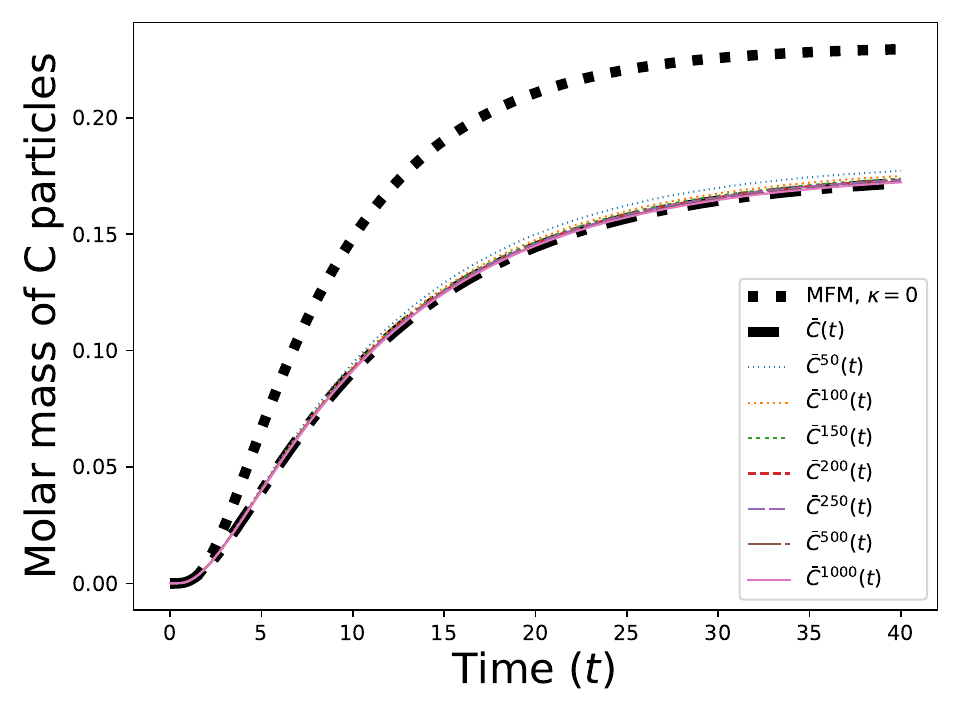}}
  \caption{Convergence of the CRDME molar mass $\bar{C}^\gamma(t)$ to the MFM molar mass $\bar{C}(t)$ as $\gamma$ increases over the time interval $[0,40]$, compared with the MFM molar mass in the no-potentials case ($\kappa = 0$).}
\end{subfigure}%
    \hfill
\begin{subfigure}[b]{.5\textwidth}
  \centering
\centerline{\includegraphics[width=\linewidth]{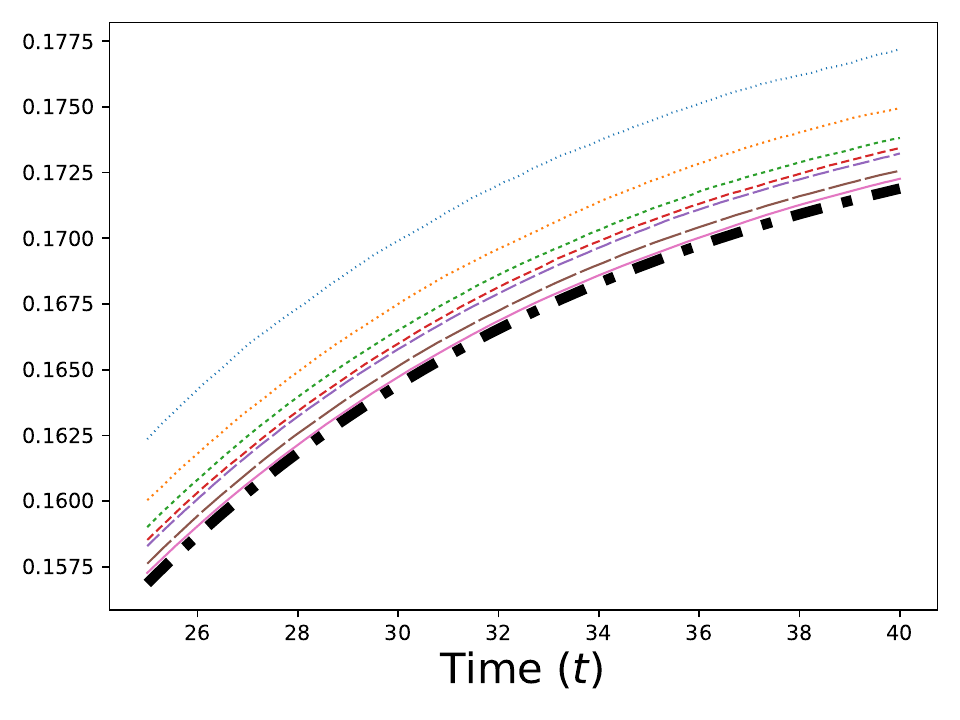}}
  \caption{Convergence of the CRDME molar mass $\bar{C}^\gamma(t)$ to the MFM molar mass $\bar{C}(t)$ as $\gamma$ increases over the restricted time interval $[25,40]$.\newline}
\end{subfigure}\newline
\hspace*{-1cm}
\begin{subfigure}[b]{.5\textwidth}
  \centering
\centerline{\includegraphics[width=\linewidth]{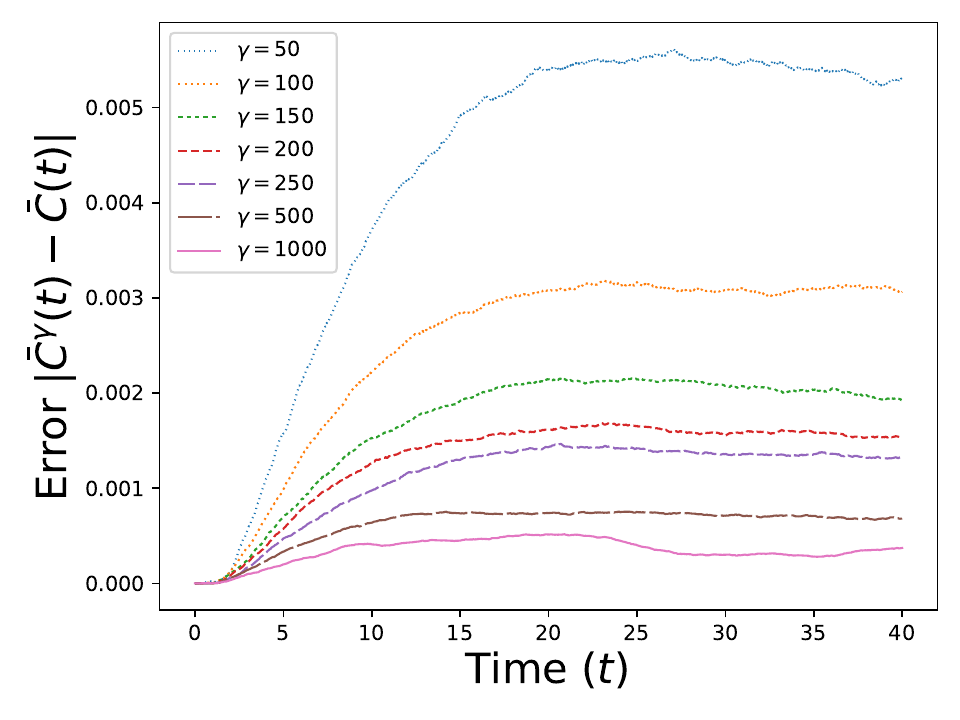}}
  \caption{Absolute differences $|\bar{C}^\gamma(t) - \bar{C}(t)|$ of the CRDME molar mass $\bar{C}^\gamma(t)$ and the MFM molar mass $\bar{C}(t)$ on the interval $[0,40]$ for different values of $\gamma$.\newline}
\end{subfigure}%
    \hfill
\begin{subfigure}[b]{.5\textwidth}
  \centering
\centerline{\includegraphics[width=\linewidth]{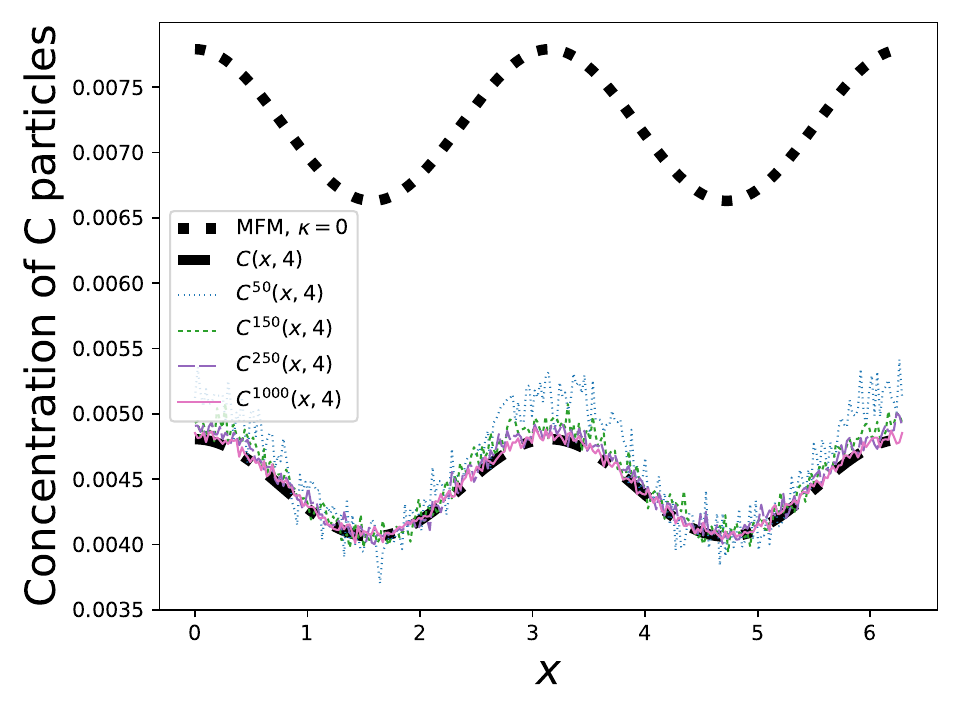}}
  \caption{Comparison of the spatial distribution of the MFM, $C(x,t)$, to the spatial distribution of the CRDME, $C^\gamma(x,t)$, at the time $t = 4$, along with the spatial distribution of the MFM in the purely-diffusive (i.e. no potentials) case.}
\end{subfigure}%
\caption{Comparison of the CRDME and MFM solutions. The CRDME data are obtained by averaging the results of 280,000 simulations.}
\label{fig:comparison}
\end{figure}

Starting with the top-left panel in Figure \ref{fig:comparison}, we observe that, as expected, the CRDME molar masses $\bar{C}^\gamma(t)$ converge to the MFM molar masses as $\gamma\to\infty$. We have also included for comparsion the mean-field solution in the no-potentials case (i.e., $\kappa = 0$), observing that over time, the repulsive pair potential force reduces the effective forward reaction rate (relative to the backward), resulting in a significantly smaller molar mass at the final time.

The top-right and bottom-left panels further compare the molar masses $\bar{C}(t)$ and $\bar{C}^\gamma(t)$ for the MFM and CRDME models in the $\kappa = 200$ case; the top-right panel shows a zoomed-in version of the top-left panel, with the $\kappa = 0$ solution removed and the time interval restricted to $[25,40]$. We see more clearly the convergence of the CRDME molar masses $\bar{C}^\gamma(t)$ to the MFM molar masses $\bar{C}(t)$. Looking directly at the errors in the bottom-left panel, computed as $|\bar{C}(t) - \bar{C}^\gamma(t)|$ for each time $t$, we see that the maximum errors range from $\approx .005$ ($\approx$ 3\%) for $\gamma = 50$ to $\approx .00025$ ($\approx$ .15\%) for $\gamma = 1000$.

Finally, in the bottom right figure, we compare the spatial distribution $C^\gamma(x,t)$ for the CRDME model with the MFM spatial distribution $C(x,t)$. Although the CRDME spatial distributions appear significantly noisier than the molar masses due to the relatively small number of particles in each voxel, we can still see that the MFM gives a good fit for the particle model, especially for larger values of $\gamma$. We have also once again included for comparison the $\kappa = 0$ MFM spatial distribution. 

\section{Proof of Theorem \ref{T:MainTheorem}.}\label{S:ProofMainTheorem}

Without loss of generality, we assume that $\tilde{L}=0$ in this section. The case when $\tilde{L}>0$ follows by similar arguments as we now give in the $\tilde{L}=0$ case.

To rigorously determine the large population limit of the MVSP, we use the martingale problem approach for studying solutions to stochastic differential equations developed by Stroock and Varadhan \cite{EthierKurtz1986,StroockVaradhan2006}. The proof is divided into four subsections. In Subsection \ref{SS:PathLevelDescription}, we provide the path level description of $\mu_{t}^{\zeta, j}$, and we derive equations for its expectation in Subsection \ref{SS:TakingExpectations}. Tightness and identification of the limit are presented in Subsection \ref{SS:LimitIdentification}. 
Lastly, we prove in Subsection \ref{SS:Uniqueness} that the limit equation has a unique solution. Collectively, these results imply Theorem \ref{T:MainTheorem}, see, for example, the proof of Theorem 5.5 in~\cite{IsaacsonMa2022} for details.

\subsection{Path level description.}\label{SS:PathLevelDescription}
For a test function $f \in C_{b}^{2}(\mathbb{R}^{d})$ and for each species $j=1, \cdots, J$, we obtain the coupled system
\begin{align*}
 \langle f, & \mu_{t}^{\zeta, j}\rangle
=\langle f, \mu_{0}^{\zeta, j}\rangle+\frac{1}{\gamma}\sum_{i \geq 1} \int_{0}^{t} 1_{\{i \leq \gamma\langle 1, \mu_{s^-}^{\zeta, j}\rangle\}} \sqrt{2 D_{j}} \frac{\partial f}{\partial Q}(H^{i}(\gamma\mu_{s^-}^{\zeta, j})\bigr) d W_{s}^{i}\\
&\phantom{=} + \frac{1}{\gamma}\int_{0}^{t} \sum_{i=1}^{\gamma\langle 1, \mu_{s^-}^{\zeta, j}\rangle} \biggl(D_{j} \frac{\partial^{2} f}{\partial Q^{2}}\bigl(H^{i}(\gamma\mu_{s^-}^{\zeta, j})\bigr) - \frac{\partial f}{\partial Q}\bigl(H^{i}(\gamma\mu_{s^-}^{\zeta, j})\bigr) \cdot \nabla v_j\bigl(H^{i}(\gamma\mu_{s^-}^{\zeta, j})\bigr)\biggr) d s \\
&\phantom{=}  - \frac{1}{\gamma}\int_{0}^{t} \sum_{i=1}^{\gamma\langle 1, \mu_{s^-}^{\zeta, j}\rangle}\biggl(\frac{\partial f}{\partial Q}\bigl(H^{i}(\gamma\mu_{s^-}^{\zeta, j})\bigr) \cdot \frac{1}{\gamma}\nabla \sum_{j'=1, j' \neq j}^{J} \sum_{k=1}^{\gamma\langle 1, \mu_{s^-}^{\zeta, j'}\rangle}u_{j,j'}\bigl(\|H^{i}(\gamma\mu_{s^-}^{\zeta,j}) - H^{k}(\gamma\mu_{s^-}^{\zeta, j'})\|\bigr)\biggr)d s\\
&\phantom{=}  - \frac{1}{\gamma}\int_{0}^{t} \sum_{i=1}^{\gamma\langle 1, \mu_{s^-}^{\zeta, j}\rangle}\biggl(\frac{\partial f}{\partial Q}\bigl(H^{i}(\gamma\mu_{s^-}^{\zeta, j})\bigr) \cdot \frac{1}{\gamma}\nabla \sum_{k=1, k \neq i}^{\gamma\langle 1, \mu_{s^-}^{\zeta, j}\rangle}u_{j,j}\biggl(\|H^{i}(\gamma\mu_{s^-}^{\zeta,j}) - H^{k}(\gamma\mu_{s^-}^{\zeta, j})\|\biggr)\biggr)d s\\
&\phantom{=}  +\sum_{\ell=1}^{L} \int_{0}^{t} \int_{\mathbb{I}^{(\ell)}} \int_{\mathbb{Y}^{(\ell)}} \int_{\mathbb{R}_{+}^{3}}\biggl(\langle f, \mu_{s^-}^{\zeta, j}-\frac{1}{\gamma} \sum_{r=1}^{\alpha_{\ell j}} \delta_{H^{i_{r}^{(j)}}(\gamma \mu_{s^-}^{\zeta, j})}+\frac{1}{\gamma} \sum_{r=1}^{\beta_{\ell j}} \delta_{y_{r}^{(j)}}\rangle-\langle f, \mu_{s^-}^{\zeta, j}\rangle\biggr) 1_{\{\bm{i} \in \Omega^{(\ell)}(\gamma\mu_{s^-}^{\zeta})\}} \\
&\phantom{=} \qquad \times 1_{\{\theta_1 \leq K_{\ell}^{\gamma}\bigl(\mathcal{P}^{(\ell)}(\gamma\mu_{s^-}^{\zeta}, \bm{i})\bigr)\}} 1_{\{\theta_2 \leq m_{\ell}^{\eta}(\bm{y}|\mathcal{P}^{(\ell)}(\gamma\mu_{s^-}^{\zeta}, \bm{i}))\}} 1_{\{\theta_3 \leq \pi_{\ell}^{\gamma} \bigl(\bm{y}|\bm{x}, \vmu_{s^-}^{\zeta}(dx')\bigr)\}} d N_{\ell}(s, \bm{i, y}, \theta_1, \theta_2, \theta_3)\\
&=\langle f, \mu_{0}^{\zeta, j}\rangle+\frac{1}{\gamma}\sum_{i \geq 1} \int_{0}^{t} 1_{\{i \leq \gamma\langle 1, \mu_{s^-}^{\zeta, j}\rangle\}} \sqrt{2 D_{j}} \frac{\partial f}{\partial Q}(H^{i}(\gamma\mu_{s^-}^{\zeta, j})\bigr) d W_{s}^{i}\\
&\phantom{=}  + \frac{1}{\gamma}\int_{0}^{t} \sum_{i=1}^{\gamma\langle 1, \mu_{s^-}^{\zeta, j}\rangle} \biggl(D_{j} \frac{\partial^{2} f}{\partial Q^{2}}\bigl(H^{i}(\gamma\mu_{s^-}^{\zeta, j})\bigr) - \frac{\partial f}{\partial Q}\bigl(H^{i}(\gamma\mu_{s^-}^{\zeta, j})\bigr) \cdot \nabla v_j \bigl(H^{i}(\gamma\mu_{s^-}^{\zeta, j})\bigr)\biggr) d s \\
&\phantom{=}  - \frac{1}{\gamma}\int_{0}^{t} \sum_{i=1}^{\gamma\langle 1, \mu_{s^-}^{\zeta, j}\rangle}\biggl(\frac{\partial f}{\partial Q}\bigl(H^{i}(\gamma\mu_{s^-}^{\zeta, j})\bigr) \cdot \frac{1}{\gamma}\nabla \sum_{j'=1}^{J} \sum_{k=1}^{\gamma\langle 1, \mu_{s^-}^{\zeta, j'}\rangle}u_{j,j'}\bigl(\|H^{i}(\gamma\mu_{s^-}^{\zeta,j}) - H^{k}(\gamma\mu_{s^-}^{\zeta, j'})\|\bigr)\biggr)d s\\
&\phantom{=}  + \frac{1}{\gamma}\int_{0}^{t}\sum_{i=1}^{\gamma\langle 1, \mu_{s^-}^{\zeta, j}\rangle} \biggl(\frac{\partial f}{\partial Q}\bigl(H^{i}(\gamma\mu_{s^-}^{\zeta, j})\bigr) \cdot \frac{1}{\gamma}\nabla u_{j,j}\|0\|\biggr)ds\\
&\phantom{=}  +\frac{1}{\gamma} \sum_{\ell=1}^{L} \int_{0}^{t} \int_{\mathbb{I}^{(\ell)}} \int_{\mathbb{Y}^{(\ell)}} \int_{\mathbb{R}_{+}^{3}}\biggl(\langle f, -\sum_{r=1}^{\alpha_{\ell j}} \delta_{H^{i_{r}^{(j)}}(\gamma \mu_{s^-}^{\zeta, j})} + \sum_{r=1}^{\beta_{\ell j}} \delta_{y_{r}^{(j)}}\rangle\biggr) 1_{\{\bm{i} \in \Omega^{(\ell)}(\gamma\mu_{s^-}^{\zeta})\}} \\
&\phantom{=} \quad \times 1_{\{\theta_1 \leq K_{\ell}^{\gamma}\bigl(\mathcal{P}^{(\ell)}(\gamma\mu_{s^-}^{\zeta}, \bm{i})\bigr)\}} 1_{\{\theta_2 \leq m_{\ell}^{\eta}(\bm{y}|\mathcal{P}^{(\ell)}(\gamma\mu_{s^-}^{\zeta}, \bm{i}))\}}
1_{\{\theta_3 \leq \pi_{\ell}^{\gamma} \bigl(\bm{y}|\bm{x}, \mu_{s^-}^{\zeta}(dx')\bigr)\}}d N_{\ell}(s, \bm{i, y}, \theta_1, \theta_2, \theta_3).\numberthis \label{systemfull}
\end{align*}
The exchangeability of the sum, the differentiation, and the Lebesgue integral here and in the next subsection are justified by the fact that for fixed $\zeta, \gamma\langle 1, \mu_{s^-}^{\zeta, j}\rangle$ is finite by Assumption \eqref{L:uniformboundofmeasures}, and that $f$ and its partial derivatives are uniformly bounded.

\subsection{Taking expectations.}\label{SS:TakingExpectations} By taking expectation on \eqref{systemfull}, we have
   \begin{align*}
    \mathbb{E}[\langle f, \mu_{t}^{\bm{\zeta}, j}\rangle]
      &=\mathbb{E}[\langle f, \mu_{0}^{\zeta, j}\rangle]+\mathbb{E}\biggl[\int_{0}^{t} \int_{\mathbb{R}^{d}} \frac{1}{\gamma} \sum_{i=1}^{\gamma\langle 1, \mu_{s^-}^{\zeta, j}\rangle}\biggl( D_{j} \frac{\partial^{2} f}{\partial Q^{2}}(x) - \frac{\partial f}{\partial Q}(x) \cdot \nabla v_j(x) \biggr) \delta_{H^{i}(\gamma \mu_{s^-}^{\zeta, j})}(d x) d s\biggr] \\
    &\phantom{=} -\mathbb{E}\biggl[\frac{1}{\gamma}\int_{0}^{t} \int_{\mathbb{R}^{d}}\sum_{i=1}^{\gamma\langle 1, \mu_{s^-}^{\zeta, j}\rangle}\frac{\partial f}{\partial Q}(x)\biggl( \int_{\mathbb{R}^{d}}\frac{1}{\gamma}\nabla \sum_{j'=1}^{J} \sum_{k=1}^{\gamma\langle 1, \mu_{s^-}^{\zeta, j'}\rangle}u_{j,j'}(\norm{x-y})\delta_{H^{k}(\gamma \mu_{s^-}^{\zeta, j'})}(d y)\biggr)\\
    &\phantom{=} \qquad\qquad\qquad\qquad\qquad\qquad\qquad\qquad\qquad\qquad\qquad\qquad\qquad \times \delta_{H^{i}(\gamma \mu_{s^-}^{\zeta, j})}(d x) d s\biggr]\\
    &\phantom{=}  +\mathbb{E}\biggl[\frac{1}{\gamma}\int_{0}^{t} \int_{\mathbb{R}^{d}}\sum_{i=1}^{\gamma\langle 1, \mu_{s^-}^{\zeta, j}\rangle}\frac{\partial f}{\partial Q}(x) \cdot\biggl( \frac{1}{\gamma}\nabla  u_{j,j}(\|0\|)\biggr)\delta_{H^{i}(\gamma \mu_{s^-}^{\zeta, j})}(d x)d s\biggr]\\
    &\phantom{=}  -\sum_{\ell=1}^{L} \mathbb{E}\biggl[\int_{0}^{t} \frac{1}{\gamma} \sum_{i \in \Omega^{(\ell)}(\gamma \mu_{s^-}^{\zeta})}\biggl(\sum_{r=1}^{\alpha_{\ell j}} f\bigl(H^{i_{r}^{(j)}}(\gamma \mu_{s^-}^{\zeta, j})\bigr)\biggr) K_{\ell}^{\gamma}\bigl(\mathcal{P}^{(\ell)}(\gamma \mu_{s^-}^{\zeta}, \bm{i})\bigr)\\
    &\phantom{=} \qquad\qquad\qquad\qquad\qquad\qquad \times\biggl(\int_{\mathbb{Y}^{(\ell)}} m_{\ell}^{\eta}\bigl(\bm{y} | \mathcal{P}^{(\ell)}(\gamma \mu_{s^-}^{\zeta}, \bm{i}
    )\bigr)
     \pi_{\ell}^{\gamma} \bigl(\bm{y}|\bm{x}, \vmu_{s^-}^{\zeta}(dx')\bigr) d\bm{y}\biggr) d s\biggr] \\
    &\phantom{=}  +\sum_{\ell=1}^{L} \mathbb{E}\biggl[\int_{0}^{t} \frac{1}{\gamma} \sum_{i \in \Omega^{(\ell)}(\gamma\mu_{s^-}^{\zeta})}K_{\ell}^{\gamma}\bigl(\mathcal{P}^{(\ell)}(\gamma \mu_{s^-}^{\zeta}, \bm{i})\bigr)\\
    &\phantom{=}  \qquad\qquad\qquad \times\biggl(\int_{\mathbb{Y}^{(\ell)}}\bigl(\sum_{r=1}^{\beta_{\ell j}} f(y_{r}^{(j)})\bigr)m_{\ell}^{\eta}\bigl(\bm{y} | \mathcal{P}^{(\ell)}(\gamma \mu_{s^-}^{\zeta}, \bm{i}
    )\bigr) \pi_{\ell}^{\gamma} \bigl(\bm{y}|\bm{x}, \vmu_{s^-}^{\zeta}(dx')\bigr) d\bm{y}\biggr) d s\biggr] \\
    &= \mathbb{E}[\langle f, \mu_{0}^{\zeta, j}\rangle]+\mathbb{E}\biggl[\int_{0}^{t}\langle D_j\frac{\partial^2 f}{\partial Q^2}(x) - \frac{\partial f}{\partial Q} \cdot \nabla v_j (x), \mu_{s^-}^{\zeta, j}(d x)\rangle d s\biggr]\\
    &\phantom{=}  +\mathbb{E}\biggl[\int_{0}^{t}\bigl\langle \sum_{j'=1}^{J}\langle -\frac{\partial f}{\partial Q}(x) \cdot  \nabla u_{j,j'}(\norm{x-y}), \mu_{s^-}^{\zeta, j'}(d y) \rangle, \mu_{s^-}^{\zeta, j}(d x)\bigr\rangle d s\biggr] \\
    &\phantom{=}  +\frac{1}{\gamma}\mathbb{E}\biggl[\int_{0}^{t} \langle \frac{\partial f}{\partial Q}(x) \cdot \nabla u_{j,j}(\|0\|), \mu_{s^-}^{\zeta, j}(d x)\rangle d s\biggr] \\
    &\phantom{=}  -\sum_{\ell=1}^{L} \mathbb{E}\biggl[\int_{0}^{t} \frac{1}{\gamma}\int_{\mathbb{X}^{(\ell)}}\sum_{i \in \Omega^{(\ell)}(\gamma\mu_{s^-}^{\zeta})} \bigl(\sum_{r=1}^{\alpha_{\ell j}} f(x_{r}^{(j)}) \bigr)K^{\gamma}_{\ell}(\bm{x})\\
    &\phantom{=} \qquad\qquad\qquad\qquad\qquad \times\biggl(\int_{\mathbb{Y}^{(\ell)}}m_{\ell}^{\eta}(\bm{y} | \bm{x}
    )\pi_{\ell}^{\gamma} \bigl(\bm{y}|\bm{x}, \vmu_{s^-}^{\zeta}(dx')\bigr) d\bm{y}\biggr)\delta_{\mathcal{P}^{(\ell)}(\gamma\mu_{s^-}^{\zeta}, \bm{i})}(d \bm{x}) d s\biggr]\\
    &\phantom{=}  +\sum_{\ell=1}^{L} \mathbb{E}\biggl[\int_{0}^{t}\frac{1}{\gamma} \int_{\mathbb{X}^{(\ell)}}\sum_{i \in \Omega^{(\ell)}(\gamma\mu_{s^-}^{\zeta})} K^{\gamma}_{\ell}(\bm{x})\\
    &\phantom{=} \qquad\qquad \times\biggl(\int_{\mathbb{Y}^{(\ell)}}\bigl(\sum_{r=1}^{\beta_{\ell j}} f(y_{r}^{(j)})\bigr) m_{\ell}^{\eta}(\bm{y} | \bm{x}
    ) \pi_{\ell}^{\gamma} \bigl(\bm{y}|\bm{x}, \vmu_{s^-}^{\zeta}(dx')\bigr) d\bm{y}\biggr)\delta_{\mathcal{P}^{(\ell)}(\gamma\mu_{s^-}^{\zeta}, \bm{i})}(d \bm{x}) d s\biggr].
\end{align*}
Define the operator
\begin{equation} \label{eq:GammaFDef}
    \fsum{f} := \sum_{r=1}^{\beta_{\ell,j}} f(y_r^{(j)}) - \sum_{r=1}^{\alpha_{\ell,j}} f(x_r^{(j)}),
\end{equation}
and note in the remainder that for any test functions we consider
\begin{equation*}
    \abs{\fsum{f}} \leq (\beta_{\ell,j} + \alpha_{\ell,j}) \norm{f}_{\infty}.
\end{equation*}
Using the operators $(\mathcal{L}_{j}f)(x)$ and $(\tilde{\mathcal{L}}_{j,j'} f)(x,y)$ defined in (\ref{Eq:Operators}) we then obtain
\begin{align*}
    \mathbb{E}[\langle f, \mu_{t}^{\bm{\zeta}, j}\rangle]
    &= \mathbb{E}[\langle f, \mu_{0}^{\zeta, j}\rangle]+\mathbb{E}\biggl[\int_{0}^{t}\langle(\mathcal{L}_{j} f)(x), \mu_{s^-}^{\zeta, j}(d x)\rangle d s\biggr]\\
    &\phantom{=} +\mathbb{E}\biggl[\int_{0}^{t}\bigl\langle\sum_{j'=1}^{J}\langle(\tilde{\mathcal{L}}_{j,j'} f)(x,y), \mu_{s^-}^{\zeta, j'}(dy)\rangle,\mu_{s^-}^{\zeta, j}(d x)\bigr\rangle d s\biggr]\\
    &\phantom{=} +\frac{1}{\gamma}\mathbb{E}\biggl[\int_{0}^{t} \langle \frac{\partial f}{\partial Q}(x) \cdot \nabla u_{j,j}(\|0\|), \mu_{s^-}^{\zeta, j}(d x)\rangle d s\biggr] \\
    &\phantom{=}  +\sum_{\ell=1}^{L} \mathbb{E}\biggl[\int_{0}^{t} \int_{\tilde{\mathbb{X}}(\ell)} \frac{1}{\bm{\alpha}^{(\ell)}!} K_{\ell}(\bm{x})\biggl(\int_{\mathbb{Y}^{(\ell)}}\fsum{f}{m_{\ell}^{\eta}(\bm{y} | \bm{x})}\\
    &\phantom{=} \qquad\qquad\qquad\qquad\qquad\qquad\qquad \times \pi_{\ell}^{\gamma} \bigl(\bm{y}|\bm{x}, \vmu_{s^-}^{\zeta}(dx')\bigr) d\bm{y}\biggr) \lambda^{(\ell)}[\mu_{s^-}^{\zeta}](d \bm{x}) d s\biggr]. \numberthis \label{expectation}
\end{align*}
For this last equality, we switch integrals of the form $\int_{\mathbb{X}^{(\ell)}} \sum_{i \in \Omega^{(\ell)}(\gamma \mu_{s-}^{\zeta})} \cdots \delta_{\mathcal{P}^{(\ell)}(\gamma \mu_{s-}^{\zeta}, i)}(d\bm{x})$ to $\int_{\tilde{\mathbb{X}}^{(\ell)}} \frac{1}{\alpha^{(\ell)} !} \cdots \lambda^{(\ell)}[\mu_{s-}^{\zeta}](d\bm{x})$ using the definitions of $\mu_{s}^{\zeta, j}(d\bm{x})$ and $\lambda^{(\ell)}[\cdot]$ (see equation \eqref{mu} and Definition \ref{lambda}), and removing probability zero sets where two particles with the same type are simultaneously located at the same spatial location (see Definition \ref{xtilde}). Note that by definition indices for particles of the same species are ordered by the allowable substrate index sampling space $\Omega^{(\ell)}$ (see Definition \ref{omega}). In converting from integrals involving the positions of individual particles with $\delta_{\mathcal{P}^{(\ell)}(\gamma \mu_{s-}^{\zeta}, i)}(d\bm{x})$ to integrals involving product measures $\lambda^{(\ell)}[\mu_{s-}^{\zeta}](d\bm{x})$, we need to remove the ``diagonal" indices by means of integrating on $\tilde{\mathbb{X}}^{(\ell)}$ (see Definition \ref{xtilde}) and normalizing by the total number of index orderings $\bm{\alpha}^{(\ell)}!$.

\subsection{Tightness and Identification of the limit.}\label{SS:LimitIdentification}
We start by discussing relative compactness of the sequence $\{\mu_{t}^{\zeta, j}\}_{t \in[0, T]}, j=1,2, \cdots, J$. Recall that $M_{F}(\mathbb{R}^{d})$ denotes the space of finite measures endowed with the weak topology.
\begin{lemma}\label{L:Tightness}
The measure-valued processes $\{\mu_{t}^{\zeta, j}\}_{t \in[0, T]}, j=1,2, \cdots, J$ are relatively compact on $\mathbb{D}_{M_{F}(\mathbb{R}^{d})}[0, T]$, the space of c\`adl\`ag paths with values in $M_{F}(\mathbb{R}^{d})$ endowed with Skorokhod topology. Further, the corresponding weak limit of any convergent subsequence of $\{\vmu^{\zeta}_t\}_{t \in [0,T]}$ as $\zeta \to 0$ is in $C_{\otimes_{j=1}^J M_F(\mathbb{R}^d)}([0,T])$.
\end{lemma}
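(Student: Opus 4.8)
The plan is to verify the standard two-part tightness criterion for $M_{F}(\mathbb{R}^{d})$-valued càdlàg processes used in the purely diffusive setting in \cite{IsaacsonMa2022}: it suffices to establish, for each fixed $j$, (i) a \emph{compact containment condition}, i.e., that for every $\varepsilon>0$ there is a relatively compact set $\mathcal{K}_{\varepsilon}\subset M_{F}(\mathbb{R}^{d})$ with $\inf_{\zeta}\mathbb{P}\bigl(\mu_{t}^{\zeta,j}\in\mathcal{K}_{\varepsilon}\ \text{for all}\ t\in[0,T]\bigr)\geq 1-\varepsilon$, and (ii) that for each $f$ in a suitable dense subclass of $C_{b}(\mathbb{R}^{d})$ — taking $f\in C_{b}^{2}(\mathbb{R}^{d})$, or even $f\in C_{c}^{\infty}(\mathbb{R}^{d})$, suffices — the real-valued processes $\{\langle f,\mu_{\cdot}^{\zeta,j}\rangle\}_{\zeta}$ are tight in $\mathbb{D}_{\mathbb{R}}[0,T]$. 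Relative compactness of $\{\vmu_{\cdot}^{\zeta}\}$ in $\mathbb{D}_{\otimes_{j=1}^{J}M_{F}(\mathbb{R}^{d})}([0,T])$ then follows coordinatewise.

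For (ii) I would use the semimartingale decomposition of $\langle f,\mu_{t}^{\zeta,j}\rangle$ implicit in \eqref{systemfull}, writing $\langle f,\mu_{t}^{\zeta,j}\rangle=\langle f,\mu_{0}^{\zeta,j}\rangle+A_{t}^{\zeta,j,f}+M_{t}^{\zeta,j,f}$, with $A^{\zeta,j,f}$ the predictable finite-variation part (the four drift–diffusion terms together with the compensators of the reaction terms) and $M^{\zeta,j,f}$ the local martingale consisting of the Itô integrals against the $W^{i}$ and the compensated Poisson integrals. Pointwise-in-time tightness is immediate from $\abs{\langle f,\mu_{t}^{\zeta,j}\rangle}\leq\norm{f}_{\infty}C_{\circ}$ via Assumption \ref{L:uniformboundofmeasures}. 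For the Aldous–Rebolledo oscillation condition, Assumptions \ref{L:uniformboundofmeasures}, \ref{anorm}, \ref{A:K_bound}, \ref{A:PlacementMeasure} and the bound $\pi_{\ell}^{\gamma}\leq 1$ give, uniformly over stopping times $\tau\leq\tau+\delta\leq T$ and over $\zeta$, estimates of the form $\mathbb{E}\bigl[\abs{A_{\tau+\delta}^{\zeta,j,f}-A_{\tau}^{\zeta,j,f}}\bigr]\leq C_{f}\,\delta$ and $\mathbb{E}\bigl[\langle M^{\zeta,j,f}\rangle_{\tau+\delta}-\langle M^{\zeta,j,f}\rangle_{\tau}\bigr]\leq C_{f}\,\delta/\gamma$; the $1/\gamma$ appears because each jump of $\langle f,\mu_{\cdot}^{\zeta,j}\rangle$ has size $O(1/\gamma)$ while jumps occur at rate $O(\gamma)$, and the Brownian contribution to the bracket is likewise $O(1/\gamma)$. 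This yields tightness of each $\{\langle f,\mu_{\cdot}^{\zeta,j}\rangle\}_{\zeta}$.

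For (i) I would take $\mathcal{K}_{\varepsilon}$ of the form $\{\mu:\langle 1,\mu\rangle\leq C_{\circ},\ \mu(B(0,R_{n})^{c})\leq 1/n\ \text{for all}\ n\}$ for a suitable sequence $R_{n}\uparrow\infty$; such sets are relatively compact in $M_{F}(\mathbb{R}^{d})$ with the weak topology, and the mass bound holds identically by Assumption \ref{L:uniformboundofmeasures}. To control mass escaping to spatial infinity I would test \eqref{systemfull} against a smooth cutoff $g_{R}$ with $g_{R}\equiv 0$ on $B(0,R)$, $g_{R}\equiv 1$ off $B(0,2R)$ and $\norm{g_{R}}_{C^{2}}$ bounded uniformly in $R$, and estimate $\mathbb{E}\bigl[\sup_{t\leq T}\langle g_{R},\mu_{t}^{\zeta,j}\rangle\bigr]$ by partitioning $[0,T]$: the initial term $\langle g_{R},\mu_{0}^{\zeta,j}\rangle$ vanishes for $R$ large and $\zeta$ small by Assumption \ref{ainitial} (the $\xi_{0}^{j}$ are compactly supported and $\mu_{0}^{\zeta,j}\to\xi_{0}^{j}$ weakly), the martingale part is $O(\gamma^{-1/2})$ in $L^{2}$ by the bracket bound above, and the drift–diffusion and reaction contributions over each short subinterval are controlled by $C\,\delta$ times quantities made small uniformly in $\zeta$ by choosing $R$ large — using boundedness of the drift (Assumption \ref{anorm}) and the fact that reaction products are placed near the substrates (Assumption \ref{A:PlacementDensity}) with the tail control of Assumption \ref{L:tail}. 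Combining (i) and (ii) gives the claimed relative compactness.

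Finally, for the continuity of the limit: the maximal jump satisfies $\sup_{t\leq T}\abs{\langle f,\mu_{t}^{\zeta,j}\rangle-\langle f,\mu_{t^{-}}^{\zeta,j}\rangle}\leq\tfrac{1}{\gamma}\max_{\ell}(\alpha_{\ell j}+\beta_{\ell j})\norm{f}_{\infty}\to 0$ as $\zeta\to0$; since the maximal-jump functional is continuous on $\mathbb{D}_{\mathbb{R}}[0,T]$, every subsequential limit of $\langle f,\mu_{\cdot}^{\zeta,j}\rangle$ has continuous paths, and running this over a countable dense family of $f$ together with the definition of the Skorokhod topology on $\mathbb{D}_{M_{F}(\mathbb{R}^{d})}[0,T]$ shows that the weak limit of any convergent subsequence of $\{\vmu_{\cdot}^{\zeta}\}$ lies in $C_{\otimes_{j=1}^{J}M_{F}(\mathbb{R}^{d})}([0,T])$. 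I expect the main obstacle to be the compact containment condition (i): bounding the mass that reaches spatial infinity uniformly in $\zeta$ requires simultaneously handling the drift, the diffusive spreading, and the mass relocation caused by reactions over the whole interval $[0,T]$; by contrast, step (ii) and the continuity statement are comparatively routine once the $O(1/\gamma)$ scaling of the jumps is exploited.
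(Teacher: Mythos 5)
Your outline is essentially the route the paper takes: the paper gives no details for this lemma, saying only that the proof is exactly as in \cite{IsaacsonMa2022} with minor adjustments for the bounded one- and two-body potentials, and your argument---Aldous--Rebolledo tightness of the projections $\langle f,\mu^{\zeta,j}_{\cdot}\rangle$ via the semimartingale decomposition with $O(\delta)$ finite-variation and $O(\delta/\gamma)$ bracket increments, a compact containment condition, and continuity of the limit from the $O(1/\gamma)$ maximal jump---is precisely that standard argument, with the potentials entering only through the bounded drift of Assumption \ref{anorm}. One detail in your compact-containment step needs care, though. Taking $\|g_{R}\|_{C^{2}}$ merely bounded uniformly in $R$ does not make the transport contribution small (it is then only bounded by the mass in the annulus, which is what you are trying to estimate), and the justification ``reaction products are placed near the substrates'' is not literally available: $K_{\ell}$ is only assumed bounded (Assumption \ref{A:K_bound}), so substrates arbitrarily far apart may react. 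The fix is standard: take $g_{R}(x)=\chi(|x|/R)$ radially nondecreasing, so $\|\nabla g_{R}\|_{\infty}=O(1/R)$ and the drift--diffusion terms contribute $O(C_{\circ}/R)$ uniformly in $\zeta$; for the reaction gain terms use that the placement points of Assumption \ref{A:PlacementDensity} are convex combinations of substrate positions, hence $|y|\leq |x_{1}|\vee|x_{2}|$ and $g_{R}(y)\leq g_{R}(x_{1})+g_{R}(x_{2})$ up to errors controlled by the mollifier range $\eta$ and the $\rho$-tail (Assumption \ref{L:tail}). This produces a Gr\"{o}nwall inequality for $\sum_{j}\mathbb{E}\langle g_{R},\mu^{\zeta,j}_{s}\rangle$ (started from the small initial value guaranteed by Assumption \ref{ainitial}) rather than a term that is small outright, and with that modification your proof closes.
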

\begin{proof}
The proof is exactly as in \cite{IsaacsonMa2022} with minor adjustments to account for the presence of the bounded one-body and two-body potential interactions. We omit the details for the sake of brevity.
\end{proof}

Given a sequence $\zeta_k \to 0$ as $k \to \infty$, by Lemma \ref{L:Tightness} the sequence of marginal distribution vectors $\{\bm{\mu}_{t}^{\zeta_k}\}_k$ has a weakly convergent subsequence. Recall that we let $\bm{\xi}_{t}:=(\xi_{t}^{1}, \xi_{t}^{2}, \cdots, \xi_{t}^{J})$ be the corresponding limiting marginal distribution vector on $\mathbb{R}^{J \times d}$ and $\xi_{t}=\sum_{j=1}^{J} \xi_{t}^{j} \delta_{S_{j}}$ the corresponding limiting particle distribution on $\hat{P}$. We claim that for each $1 \leq j \leq J$, $\xi_t^j$ is then the unique solution to~\eqref{evl}, which in the case that $\tilde{L} = 0$ becomes
\begin{equation} \label{evl_noLtilde}
\begin{aligned}
\langle f, \xi_{t}^{j}\rangle &=\langle f, \xi_{0}^{j}\rangle+\int_{0}^{t}\langle(\mathcal{L}_{j} f)(x), \xi_{s}^{j}(d x)\rangle d s +\int_{0}^{t}\bigl\langle\sum_{j'=1}^J\langle(\tilde{\mathcal{L}}_{j,j'} f)(x,y), \xi_s^{j'}(dy)\rangle,\xi_{s}^{j}(d x)\bigr\rangle d s\\
&\phantom{=} +\sum_{\ell=1}^{L}\int_{0}^{t} \int_{\tilde{\mathbb{X}}(\ell)} \frac{1}{\bm{\alpha}^{(\ell)}!} K_{\ell}(\bm{x})\biggl(\int_{\mathbb{Y}^{(\ell)}}\fsum{f} m_{\ell}(\bm{y} | \bm{x})\pi_{\ell}\bigl(\bm{y}|\bm{x}, \vxi_{s}(dx')\bigr)d \bm{y}\biggr) \lambda^{(\ell)}[\xi_{s}](d \bm{x}) ds,
\end{aligned}
\end{equation}
where $\Gamma^{\ell,j}$ is defined in~\eqref{eq:GammaFDef}. Uniqueness of solutions to this equation is shown in Subsection \ref{SS:Uniqueness}. We now identify that the limit $\xi^j_t$ satisfies this equation.


Let $\mathcal{S}$ be the collection of elements $\Phi$ in the space of bounded functionals, $B(\otimes_{j=1}^{J} M_{F}(\mathbb{R}^{d})\bigr)$, of the form
\begin{equation}\label{mar}
\Phi(\bm{\mu})=\varphi(\langle f_{1}, \bm{\mu}\rangle,\langle f_{2}, \bm{\mu}\rangle \ldots\langle f_{M}, \bm{\mu}\rangle)
\end{equation}
for some $M \in \mathbb{N}$, some $\varphi \in C^{\infty}(\mathbb{R}^{J \times M})$, and $\langle f_{m}, \bm{\mu}\rangle=(\langle f_{1, m}, \mu^{1}\rangle, \cdots,\langle f_{J, m}, \mu^{J}\rangle)$ where each $\{f_{j, m}\} \in C_{b}^{2}(\mathbb{R}^{d})$ for $j=1, \cdots, J$ and $m=1, \cdots, M$. Then $\mathcal{S}$ separates points in $\otimes_{j=1}^{J} M_{F}(\mathbb{R}^{d})$ (see Chapter $3.4$ of \cite{EthierKurtz1986} and Proposition $3.3$ of \cite{CapponiSun2020}). To identify the limit, it suffices to show convergence of the martingale problem for functions of the form \eqref{mar}, given the existence and uniqueness of the limiting process.

For $\Phi \in \mathcal{S}$ of the form $\eqref{mar}, \bm{\mu}:=(\mu^{1}, \mu^{2} \cdots, \mu^{J}) \in \otimes_{j=1}^{J} M_{F}(\mathbb{R}^{d})$ and $\mu=\sum_{j=1}^{J} \mu^{j} \delta_{S_{j}} \in M_{F}(\hat{P})$ with each $\mu^{j} \in M_{F}(\mathbb{R}^{d})$, the generator $\mathcal{A}$ of \eqref{evl_noLtilde} and of the limiting martingale problem for $1 \leq j \leq J$, is defined as
\begin{equation*}
\begin{aligned}
(\mathcal{A} \Phi)(\bm{\mu}) &\coloneqq
\sum_{m=1}^{M} \sum_{j=1}^{J} \frac{\partial \varphi}{\partial x_{(m-1) * J+j}}(\langle f_{1}, \bm{\mu}\rangle,\langle f_{2}, \bm{\mu}\rangle \ldots\langle f_{M}, \bm{\mu}\rangle)\\
&\times\biggl[\langle (\mathcal{L}_{j} f_{j, m})(x), \mu^{j}(dx)\rangle+\bigl\langle\sum_{j'=1}^{J}\langle\tilde{\mathcal{L}}_{j,j'} f_{j, m}(x,y), \mu^{j'}(d y)\rangle,\mu^{j}(d x)\bigr\rangle\\
&+\sum_{\ell=1}^{L} \int_{\tilde{\mathbb{X}}(\ell)} \frac{1}{\bm{\alpha}^{(\ell)}!} K_{\ell}(\bm{x})\biggl(\int_{\mathbb{Y}^{(\ell)}}\Gamma^{\ell,j}[f_{j,m}](\bm{x},\bm{y})m_{\ell}(\bm{y} | \bm{x})\pi_{\ell}\bigl(\bm{y}|\bm{x}, \vmu(dx')\bigr) d \bm{y}\biggr) \lambda^{(\ell)}[\mu](d \bm{x})\biggr].
\end{aligned}
\end{equation*}

\begin{lemma}(Weak Convergence). For any $\Phi \in \mathcal{S}$ and $0 \leq r_{1} \leq r_{2} \cdots \leq r_{W}=s<t<T$ and $\{\psi_{w}\}_{w=1}^{W} \subset$ $B(\otimes_{j=1}^{J} M_{F}(\mathbb{R}^{d})\bigr)$, we have that
\begin{equation}
\lim_{\zeta \rightarrow 0} \mathbb{E}\biggl[\{\Phi(\bm{\mu}_{t}^{\zeta})-\Phi(\mu_{s}^{\zeta})-\int_{s}^{t}(\mathcal{A} \Phi)(\bm{\mu}_{r}^{\zeta}) d r\} \prod_{w=1}^{W} \psi_{w}(\bm{\mu}_{r_{w}}^{\zeta})\biggr]=0.\label{il}
\end{equation}
\end{lemma}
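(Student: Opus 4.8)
The plan is to apply the jump--diffusion It\^o formula to $\Phi(\bm{\mu}_t^{\zeta})$, read off the prelimit generator, and then show it converges to $\mathcal{A}$ strongly enough to pass to the limit under the expectation. \emph{Step 1 (prelimit martingale identity).} Starting from the path-level representation \eqref{systemfull}, the process $r\mapsto(\langle f_{j,m},\mu_r^{\zeta,j}\rangle)_{j,m}$ is an $\mathbb{R}^{J\times M}$-valued semimartingale whose drift consists of the $\mathcal{L}_j$ and $\tilde{\mathcal{L}}_{j,j'}$ contributions together with the self-interaction term $\tfrac1\gamma\nabla u_{j,j}(\norm{0})$, whose continuous martingale part comes from the Brownian integrals, and whose jumps (each of size $O(1/\gamma)$) come from the Poisson integrals. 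It\^o's formula for $\Phi=\varphi(\langle f_1,\bm{\mu}\rangle,\dots,\langle f_M,\bm{\mu}\rangle)$ then gives, for fixed $\zeta$,
\[
\Phi(\bm{\mu}_t^{\zeta}) - \Phi(\bm{\mu}_s^{\zeta}) = \int_s^t (\mathcal{A}^{\zeta}\Phi)(\bm{\mu}_r^{\zeta})\,dr + M_t^{\zeta} - M_s^{\zeta},
\]
where $M^{\zeta}$ collects the Brownian stochastic integrals and the compensated Poisson integrals, and $\mathcal{A}^{\zeta}\Phi = \mathcal{A}^{\gamma,\eta}\Phi + R^{\zeta}\Phi$. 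Here $\mathcal{A}^{\gamma,\eta}$ has exactly the form of $\mathcal{A}$ but with $m_\ell^{\eta}$ in place of $m_\ell$ and $\pi_\ell^{\gamma}$ in place of $\pi_\ell$ --- the passage from $\sum_{\bm{i}\in\Omega^{(\ell)}}\delta_{\mathcal{P}^{(\ell)}}$ to $\tfrac1{\bm{\alpha}^{(\ell)}!}\lambda^{(\ell)}[\mu]$ (diagonal removal as in Definition \ref{xtilde}) combined with the scaling $K_\ell^{\gamma}=\gamma^{1-|\bm{\alpha}^{(\ell)}|}K_\ell$ reproduces the $\mathcal{A}$-normalisation, exactly as in the computation leading to \eqref{expectation}. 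The remainder $R^{\zeta}\Phi$ consists of the self-drift term $\tfrac1\gamma\langle\partial_Q f_{j,m}\cdot\nabla u_{j,j}(\norm{0}),\mu^j\rangle$, the quadratic-variation term $\tfrac1{2\gamma}\sum\partial^2\varphi\,\langle 2D_j\,\nabla f_{j,m}\cdot\nabla f_{j,m'},\mu^j\rangle$, and the $O(1/\gamma^2)$-per-jump correction from the nonlinearity of $\varphi$, which after multiplication by the $O(\gamma)$ total reaction intensity is again $O(1/\gamma)$. Using Assumption \ref{L:uniformboundofmeasures} (total mass $\le C_\circ$, so at most $\gamma C_\circ$ particles and finite reaction intensities for fixed $\gamma$), Assumption \ref{anorm} ($\norm{u_{j,j'}}_{C^1}\le C$), Assumption \ref{A:K_bound}, and $\varphi\in C^\infty$ with derivatives bounded on the compact range of the $\langle f_m,\bm{\mu}_r^{\zeta}\rangle$, one checks that $M^{\zeta}$ is a genuine $L^2$-martingale on $[0,T]$ and that $|R^{\zeta}\Phi(\bm{\mu}_r^{\zeta})|\le C/\gamma$ uniformly in $r$ and $\omega$.

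\emph{Step 2 (eliminating the martingale).} Substituting the identity of Step 1, the bracket in \eqref{il} becomes $M_t^{\zeta}-M_s^{\zeta}+\int_s^t(\mathcal{A}^{\zeta}-\mathcal{A})\Phi(\bm{\mu}_r^{\zeta})\,dr$. Since $r_W=s$ and each $\psi_w$ is bounded and measurable, $\prod_{w=1}^W\psi_w(\bm{\mu}_{r_w}^{\zeta})$ is a bounded $\mathcal{F}_s$-measurable random variable, so the martingale property of $M^{\zeta}$ kills the $M_t^{\zeta}-M_s^{\zeta}$ term upon taking expectations. Hence it remains to show
\[
\mathbb{E}\Bigl[\int_s^t\bigl|(\mathcal{A}^{\zeta}-\mathcal{A})\Phi(\bm{\mu}_r^{\zeta})\bigr|\,dr\Bigr]\xrightarrow[\zeta\to0]{}0 .
\]

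\emph{Step 3 (convergence of the generators).} Decompose $(\mathcal{A}^{\zeta}-\mathcal{A})\Phi=R^{\zeta}\Phi+(\mathcal{A}^{\gamma,\eta}-\mathcal{A})\Phi$; the first term contributes at most $(t-s)C/\gamma\to0$ by Step 1. The second term involves only the reaction integrals, so it suffices to estimate, for each $\ell$ and each pair $j,m$, the quantity
\[
\int_{\tilde{\mathbb{X}}^{(\ell)}}\frac{K_\ell(\bm{x})}{\bm{\alpha}^{(\ell)}!}\Bigl|\int_{\mathbb{Y}^{(\ell)}}\Gamma^{\ell,j}[f_{j,m}](\bm{x},\bm{y})\bigl(m_\ell^{\eta}\pi_\ell^{\gamma}-m_\ell\pi_\ell\bigr)(\bm{y}|\bm{x},\bm{\mu}_r^{\zeta})\,d\bm{y}\Bigr|\,\lambda^{(\ell)}[\mu_r^{\zeta}](d\bm{x}),
\]
integrated in $r$ over $[s,t]$ and in expectation. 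Split $m_\ell^{\eta}\pi_\ell^{\gamma}-m_\ell\pi_\ell = m_\ell^{\eta}(\pi_\ell^{\gamma}-\pi_\ell)+(m_\ell^{\eta}-m_\ell)\,\pi_\ell$. For the first piece, $|\Gamma^{\ell,j}[f_{j,m}]|\le(\alpha_{\ell j}+\beta_{\ell j})\norm{f_{j,m}}_{\infty}$ and $\int m_\ell^{\eta}(\cdot|\bm{x})\,d\bm{y}=1$ (Assumption \ref{A:PlacementMeasure}), so the inner integral is bounded by $(\alpha_{\ell j}+\beta_{\ell j})\norm{f_{j,m}}_{\infty}\sup_{\bm{y},\bm{x}}|\pi_\ell^{\gamma}(\bm{y}|\bm{x},\bm{\mu}_r^{\zeta})-\pi_\ell(\bm{y}|\bm{x},\bm{\mu}_r^{\zeta})|\to0$ by Assumption \ref{convpi} (the supremum runs over the set $\otimes_j\hat{M}_F(\mathbb{R}^d;C_\circ)$ in which $\bm{\mu}_r^{\zeta}$ lives a.s.; for the Fr\"ohner--No\'e model this is uniform also in the measure argument, by boundedness of the potentials). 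For the second piece, $\bm{y}\mapsto\Gamma^{\ell,j}[f_{j,m}](\bm{x},\bm{y})\pi_\ell(\bm{y}|\bm{x},\bm{\mu}_r^{\zeta})$ is bounded and continuous (by $f_{j,m}\in C_b^2$ and the $\bm{y}$-Lipschitz property, Assumption \ref{lippiiny}), while $m_\ell^{\eta}(\cdot|\bm{x})$ converges weakly to the combination of Dirac masses $m_\ell(\cdot|\bm{x})$ (Assumption \ref{A:PlacementDensity}, with the tail bound of Assumption \ref{L:tail} handling the non-compactly-supported unbinding case), so the inner integral converges pointwise to $\int_{\mathbb{Y}^{(\ell)}}\Gamma^{\ell,j}[f_{j,m}](\bm{x},\bm{y})m_\ell(\bm{y}|\bm{x})\pi_\ell(\bm{y}|\bm{x},\bm{\mu}_r^{\zeta})\,d\bm{y}$ --- this is the mollification-type statement proved in the Appendix. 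Both pieces are dominated by the constant $(\alpha_{\ell j}+\beta_{\ell j})\norm{f_{j,m}}_{\infty}$, while the outer measure $\tfrac1{\bm{\alpha}^{(\ell)}!}K_\ell\,\lambda^{(\ell)}[\mu_r^{\zeta}]$ has total mass $\le C(K)\,C_\circ^{|\bm{\alpha}^{(\ell)}|}/\bm{\alpha}^{(\ell)}!$, so dominated convergence in $d\mathbb{P}\otimes dr\otimes\lambda^{(\ell)}[\mu_r^{\zeta}](d\bm{x})$ yields \eqref{il}.

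\emph{Main obstacle.} Steps 1--2 are essentially bookkeeping --- although it is important in Step 1 to verify that the combinatorial/diagonal corrections, the quadratic-variation term, and the self-interaction term are genuinely $O(1/\gamma)$. The substantive difficulty is the mollification passage to the limit in Step 3: establishing $\int\Gamma^{\ell,j}[f]\,m_\ell^{\eta}\,\pi_\ell\,d\bm{y}\to\int\Gamma^{\ell,j}[f]\,m_\ell\,\pi_\ell\,d\bm{y}$ with enough uniformity, and with enough control on the tails of the unbinding placement densities (via Assumption \ref{L:tail}), to survive the remaining $\bm{x}$-, $r$-, and $\omega$-integrations; this is the content of the auxiliary result relegated to the Appendix, and a secondary point is ensuring the convergence $\pi_\ell^{\gamma}\to\pi_\ell$ of Assumption \ref{convpi} can be used uniformly over the bounded-mass set of measures reached by $\bm{\mu}_r^{\zeta}$.
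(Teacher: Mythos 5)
Your proposal is correct, and its skeleton coincides with the paper's: apply It\^o's formula to $\Phi(\bm{\mu}_t^{\zeta})$ starting from the path-level representation \eqref{systemfull}, recognize that the prelimit drift reproduces $\mathcal{A}$ up to (i) the replacement of $m_\ell,\pi_\ell$ by $m_\ell^{\eta},\pi_\ell^{\gamma}$ and (ii) $O(1/\gamma)$ corrections (self-interaction term, continuous quadratic variation, second-order jump corrections), and then kill the discrepancy using Assumption \ref{convpi} for $\pi_\ell^{\gamma}\to\pi_\ell$ and the appendix mollification estimate (Lemma \ref{m-m}, which rests on Assumption \ref{lippiiny}) for $m_\ell^{\eta}\to m_\ell$; these are exactly the paper's terms $\Lambda_2,\Lambda_4,\Lambda_5,\Lambda_6$ in \eqref{ito}. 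The one genuine divergence is your treatment of the martingale part: you eliminate $M_t^{\zeta}-M_s^{\zeta}$ exactly, by pairing it with the bounded $\mathcal{F}_s$-measurable factor $\prod_{w}\psi_w(\bm{\mu}_{r_w}^{\zeta})$ (using $r_W=s$) and taking expectations, whereas the paper instead shows the martingale terms $\Lambda_1,\Lambda_3$ tend to zero in probability because their quadratic variations are $O(1/\gamma)$ (citing the purely diffusive case), after a Skorokhod-representation reduction and bounded convergence. Your route is cleaner for this particular lemma and avoids Skorokhod entirely; the paper's route costs a quadratic-variation estimate but that estimate is in any case available and useful elsewhere in the tightness/identification argument. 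One further point in your favor: you explicitly flag that Assumption \ref{convpi} is applied at the $\gamma$-dependent measure $\vmu_{r^-}^{\zeta}$, so some uniformity over the bounded-mass set $\otimes_j\hat M_F(\mathbb{R}^d;C_\circ)$ is implicitly needed (and holds for the Fr\"ohner--No\'e probabilities by boundedness of the potentials), and likewise that the mollification bound must be uniform in the measure argument --- which is precisely what the quantitative form $C\|f\|_{C_b^1}\eta$ of Lemma \ref{m-m} delivers; the paper uses the same facts with slightly less comment.
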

\begin{proof}
For each $j=1, \cdots J$, we can rewrite \eqref{system} as
$$
\langle f, \mu_{t}^{\zeta, j}\rangle=\langle f, \mu_{0}^{\zeta, j}\rangle+M_{t}^{f, j}+A_{t}^{f, j}
$$
where
\begin{equation*}
\begin{aligned}
A_{t}^{f, j}&= \int_{0}^{t}\langle(\mathcal{L}_{j} f)(x), \mu_{s-}^{\zeta, j}(d x)\rangle d s + \int_{0}^{t}\bigl\langle\sum_{j'=1}^{J}\langle(\tilde{\mathcal{L}}_{j,j'} f)(x,y), \mu_{s-}^{\zeta, j'}(d y)\rangle,\mu_{s-}^{\zeta, j}(d x)\bigr\rangle d s\\
&\phantom{=} +\frac{1}{\gamma}\int_{0}^{t} \langle \frac{\partial f}{\partial Q}(x) \cdot \nabla u_{j,j}(\|0\|), \mu_{s-}^{\zeta, j}(d x)\rangle d s\\
&\phantom{=}  +\sum_{\ell=1}^{L}\int_{0}^{t} \int_{\tilde{\mathbb{X}}(\ell)} \frac{1}{\bm{\alpha}^{(\ell)}!} K_{\ell}(\bm{x})\biggl(\int_{\mathbb{Y}^{(\ell)}}\fsum{f}{m_{\ell}^{\eta}(\bm{y} | \bm{x})}\pi_{\ell}^{\gamma} \bigl(\bm{y}|\bm{x}, \vmu_{s-}^{\zeta}(dx')\bigr) d \bm{y}\biggr) \lambda^{(\ell)}[\mu_{s-}^{\zeta}](d \bm{x}) ds
\end{aligned}
\end{equation*}
and
\begin{equation*}
\begin{aligned}
M_{t}^{f, j} &= \frac{1}{\gamma} \sum_{i \geq 1} \int_{0}^{t} 1_{\{i \leq \gamma\langle 1, \mu_{s-}^{\zeta, j}\rangle\}} \sqrt{2 D_{j}} \frac{\partial f}{\partial Q}\bigl(H^{i}(\gamma \mu_{s-}^{\zeta, j})\bigr) d W_{s}^{i} \\
&\phantom{=} +\frac{1}{\gamma }\sum_{\ell=1}^{L} \int_{0}^{t} \int_{\mathbb{I}^{(\ell)}} \int_{\mathbb{Y}^{(\ell)}} \int_{\mathbb{R}_{+}^{3}}\biggl(\langle f, -\sum_{r=1}^{\alpha_{\ell j}} \delta_{H^{(j)}}(\gamma \mu_{s-}^{\zeta, j})+ \sum_{r=1}^{\beta_{\ell, j}} \delta_{y_{r}^{(j)}}\rangle\biggr) 1_{\{\bm{i} \in \Omega^{(\ell)}(\gamma\mu_{s-}^{\zeta})\}} \\
&\phantom{=} \times 1_{\{\theta_1 \leq K_{\ell}^{\gamma}(\mathcal{P}^{(\ell)}(\gamma\mu_{s-}^{\zeta}, \bm{i})\bigr)\}} 1_{\{\theta_2 \leq m_{\ell}^{\eta}(\bm{y}|\mathcal{P}^{(\ell)}(\gamma\mu_{s-}^{\zeta},\bm{i})\bigr)\}} 1_{\{\theta_3 \leq \pi_{\ell}^{\gamma} \bigl(\bm{y}|\bm{x}, \vmu_{s^-}^{\zeta}(dx')\bigr)\}} d\tilde{N}_{\ell}(s, \bm{i, y}, \theta_1, \theta_2, \theta_3).
\end{aligned}
\end{equation*}
Note that $M_{t}^{f, j}$ is a square integrable martingale (see Proposition $2.4$ in \cite{IkedaWatanabe2014}) with quadratic variation
\begin{equation*}
\begin{aligned}
\langle M^{f, j}\rangle_{t}&=\frac{1}{\gamma^{2}} \int_{0}^{t} \sum_{i=1}^{\gamma\langle 1, \mu_{s-}^{\zeta, j}\rangle}\biggl(\sqrt{2 D_{j}} \frac{\partial f}{\partial Q}\bigl(H^{i}(\gamma \mu_{s-}^{\zeta, j})\bigr)\biggr)^{2} d s\\
&\phantom{=} +\frac{1}{\gamma^{2}}\sum_{\ell=1}^{L} \int_{0}^{t} \int_{\mathbb{Y}^{(\ell)}} \sum_{\{i \in \Omega^{(\ell)}(\gamma \mu_{s-)}^{\zeta})\}}\biggl(-\sum_{r=1}^{\alpha_{\ell j}} f\bigl(H^{i_{r}^{j}}(\gamma \mu_{s-}^{\zeta, j})\bigr)+\sum_{r=1}^{\beta_{\ell j}} f(y_{r}^{j})\biggr)^{2} \\
&\phantom{=} \qquad\qquad\qquad\qquad \times K_{\ell}^{\gamma}\bigl(\mathcal{P}^{(\ell)}(\gamma \mu_{s-}^{\zeta}, \bm{i})\bigr)  m_{\ell}^{\eta}\bigl(\bm{y} | \mathcal{P}^{(\ell)}(\gamma \mu_{s-}^{\zeta}, \bm{i})\bigr)\pi_{\ell}^{\gamma} \bigl(\bm{y}|\bm{x}, \vmu_{s^-}^{\zeta}(dx')\bigr) d\bm{y} \, ds
\end{aligned}
\end{equation*}
The quadratic variation of $M_{t}^{f, j}$ is thus uniformly bounded and goes to 0 as $\zeta \rightarrow 0$, due to the uniform boundedness of $f$ and its partial derivatives, and by Assumptions \ref{A:K_bound} and \ref{A:PlacementMeasure}.

Now define $M_{t}^{f, j}=\mathcal{C}_{t}^{f, j}+\mathcal{D}_{t}^{f, j}$, where
\begin{equation*}
\mathcal{C}_{t}^{f, j}=\frac{1}{\gamma} \sum_{i \geq 1} \int_{0}^{t} 1_{\{i \leq \gamma\langle 1, \mu_{s-}^{\zeta, j}\rangle\}} \sqrt{2 D_{j}} \frac{\partial f}{\partial Q}\bigl(H^{i}(\gamma \mu_{s-}^{\zeta, j})\bigr) d W_{s}^{i}
\end{equation*}
is the continuous martingale part, and
\begin{equation}\label{dfj}
\begin{aligned}
\mathcal{D}_{t}^{f, j}&= \frac{1}{\gamma} \sum_{\ell=1}^{L} \int_{0}^{t} \int_{\mathbb{I}^{(\ell)}} \int_{\mathbb{Y}^{(\ell)}} \int_{\mathbb{R}_{+}^{3}}\biggl(\langle f, - \sum_{r=1}^{\alpha_{\ell j}} \delta_{H^{i_{r}^{(j)}}(\gamma \mu_{s-}^{\zeta, j})}+ \sum_{r=1}^{\beta_{\ell j}} \delta_{y_{r}^{(j)}}\rangle\biggr) 1_{\{\bm{i} \in \Omega^{(\ell)}(\gamma \mu_{s-}^{\zeta})\}} \\
&\phantom{=} \times 1_{\{\theta_{1} \leq K_{\ell}^{\gamma}\bigl(\mathcal{P}^{(\ell)}(\gamma \mu_{s-}^{\zeta}, i)\bigr)\}} 1_{\{\theta_{2} \leq m_{\ell}^{\eta}\bigl(\bm{y} | \mathcal{P}^{(\ell)}(\gamma \mu_{s-}^{\zeta}, i)\bigr)\}} 1_{\{\theta_3 \leq \pi_{\ell}^{\gamma} \bigl(\bm{y}|\bm{x}, \vmu_{s^-}^{\zeta}(dx')\bigr)\}} d \tilde{N}_{\ell}(s, \bm{i}, \bm{y}, \theta_{1}, \theta_{2},\theta_{3}),
\end{aligned}
\end{equation}
is the martingale part coming from the stochastic integral with respect to the Poisson point processes.

Let $\theta = (\theta_1,\theta_2,\theta_3)$. For simplicity of notation, we define
\begin{align*}
g^{\ell, f, \mu^{\zeta, j}}(s, \bm{i}, \bm{y}, \theta) &= \biggl(\langle f, \mu_{s-}^{\zeta, j}-\frac{1}{\gamma} \sum_{r=1}^{\alpha_{\ell j}} \delta_{H^{i_{r}^{(j)}}(\gamma \mu_{s-}^{\zeta, j})}+\frac{1}{\gamma} \sum_{r=1}^{\beta_{\ell j}} \delta_{y_{r}^{(j)}}\rangle-\langle f, \mu_{s-}^{\bm{\zeta}, j}\rangle\biggr) 1_{\{\bm{i} \in \Omega^{(\ell)}(\gamma \mu_{s-}^{\zeta})\}} \\
&\phantom{=} \qquad\qquad\times 1_{\{\theta_{1} \leq K_{\ell}^{\gamma}\bigl(\mathcal{P}^{(\ell)}(\gamma \mu_{s-}^{\zeta}, i)\bigr)\}}  1_{\{\theta_{2} \leq m_{\ell}^{\eta}\bigl(\bm{y} | \mathcal{P}^{(\ell)}(\gamma \mu_{s-}^{\zeta}, i)\bigr)\}} 1_{\{\theta_3 \leq \pi_{\ell}^{\gamma} \bigl(\bm{y}|\bm{x}, \vmu_{s^-}^{\zeta}(dx')\bigr)\}}\\
&=\frac{1}{\gamma}\biggl(-\sum_{r=1}^{\alpha_{\ell j}} f\left(H^{i_{r}^{(j)}}(\gamma \mu_{s-}^{\zeta, j})\right)+\sum_{r=1}^{\beta_{\ell j}} f(y_{r}^{(j)})\biggr) 1_{\{\bm{i} \in \Omega^{(\ell)}(\gamma \mu_{s-}^{\zeta})\}} 1_{\{\theta_{1} \leq K_{\ell}^{\gamma}\bigl(\mathcal{P}^{(\ell)}(\gamma \mu_{s-}^{\zeta}, i)\bigr)\}} \\
&\phantom{=} \qquad\qquad\qquad\qquad\qquad\qquad\times 1_{\{\theta_{2} \leq m_{\ell}^{\eta}\bigl(\bm{y} | \mathcal{P}^{(\ell)}(\gamma \mu_{s-}^{\zeta}, i)\bigr)\}} 1_{\{\theta_3 \leq \pi_{\ell}^{\gamma} \bigl(\bm{y}|\bm{x}, \vmu_{s^-}^{\zeta}(dx')\bigr)\}},
\end{align*}
which represents the jumps and is uniformly bounded by $\mathcal{O}(\frac{1}{\gamma})$. With some abuse of notation we shall write $\bm{g}^{\ell, f, \bm{\mu}^{\zeta}}$ for the vector $(g^{\ell, f, \mu^{\zeta, 1}}, \cdots, g^{\ell, f, \mu^{\zeta, J}})$. Then \eqref{dfj} becomes
$$
\mathcal{D}_{t}^{f, j}=\sum_{\ell=1}^{L} \int_{0}^{t} \int_{\mathbb{I}^{(\ell)}} \int_{\mathbb{Y}^{(\ell)}} \int_{\mathbb{R}_{+}^{3}} g^{\ell, f, \mu^{\zeta, j}}(s, \bm{i}, \bm{y}, \theta) d \tilde{N}_{\ell}(s, \bm{i}, \bm{y}, \theta_{1}, \theta_{2},\theta_{3}).
$$
Applying Itô's formula (see Theorem $5.1$ in \cite{IkedaWatanabe2014}) to $\Phi(\bm{\mu}_{t}^{\zeta})$ we obtain
\begin{align*}
& \Phi(\bm{\mu}_{t}^{\zeta})-\Phi(\bm{\mu}_{s}^{\zeta})-\int_{s}^{t}(\mathcal{A} \Phi)(\bm{\mu}_{r}^{\zeta}) d r=\int_{s}^{t} \sum_{m=1}^{M} \sum_{j=1}^{J} \frac{\partial \varphi}{\partial x_{(m-1) * J+j}}\bigl(\langle f_{1}, \bm{\mu}_{r}^{\zeta}\rangle,\langle f_{2}, \bm{\mu}_{r}^{\zeta}\rangle \ldots\langle f_{M}, \bm{\mu}_{r}^{\zeta}\rangle\bigr)d \mathcal{C}_{r}^{f_{j, m}, j}
\\
&+\frac{1}{2} \int_{s}^{t} \sum_{m=1}^{M} \sum_{j=1}^{J} \frac{\partial^{2} \varphi}{\partial x_{(m-1) * J+j}^{2}}\bigl(\langle f_{1}, \bm{\mu}_{r}^{\zeta}\rangle,\langle f_{2}, \bm{\mu}_{r}^{\zeta}\rangle \cdots\langle f_{M}, \bm{\mu}_{r}^{\zeta}\rangle\bigr) d\langle\mathcal{C}^{f_{j, m}, j}\rangle_{r} 
\\
& +\sum_{\ell=1}^{L} \int_{s}^{t} \int_{\mathbb{I}^{(\ell)}} \int_{\mathbb{I}^{(\ell)}} \int_{\mathbb{R}_{+}^{2}}\biggl(\varphi\bigl(\langle f_{1}, \bm{\mu}_{r}^{\bm{\zeta}}\rangle+\bm{g}^{\ell, f_{1}, \bm{\mu}^{\zeta}}(r, \bm{i}, \bm{y}, \theta), \ldots,\langle f_{M}, \bm{\mu}_{r}^{\zeta}\rangle +\bm{g}^{\ell, f_{M}, \bm{\mu}^{\zeta}}(r, \bm{i}, \bm{y}, \theta)\bigr)\biggr.\\
& \qquad\qquad\qquad\qquad\qquad\qquad   -\varphi\bigl(\langle f_{1}, \bm{\mu}_{r}^{\zeta}\rangle,\langle f_{2}, \bm{\mu}_{r}^{\zeta}\rangle \ldots\langle f_{M}, \bm{\mu}_{r}^{\zeta}\rangle\bigr)\biggr) d \tilde{N}_{\ell}(r, \bm{i}, \bm{y}, \theta) 
\\
& +\sum_{\ell=1}^{L} \int_{s}^{t} \int_{\mathbb{I}^{(\ell)}} \int_{\mathbb{Y}^{(\ell)}} \int_{\mathbb{R}_{+}^{2}}\biggl(\varphi\bigl(\langle f_{1}, \bm{\mu}_{r}^{\bm{\zeta}}\rangle+\bm{g}^{\ell, f_{1}, \bm{\mu}^{\zeta}}(s, \bm{i}, \bm{y}, \theta), \ldots,\langle f_{M}, \bm{\mu}_{r}^{\zeta}\rangle+\bm{g}^{\ell, f_{M}, \bm{\mu}^{\zeta}}(s, \bm{i}, \bm{y}, \theta)\bigr)\biggr. \\
& \qquad\qquad -\varphi\bigl(\langle f_{1}, \bm{\mu}_{r}^{\zeta}\rangle,\langle f_{2}, \bm{\mu}_{r}^{\zeta}\rangle \ldots\langle f_{M}, \bm{\mu}_{r}^{\zeta}\rangle\bigr) \\
& \qquad\qquad -\sum_{m=1}^{M} \sum_{j=1}^{J} g^{\ell, f_{j, m}, \mu^{\zeta,j}}(s, \bm{i}, \bm{y}, \theta) \frac{\partial \varphi}{\partial x_{(m-1) J+j}}\bigl(\langle f_{1}, \bm{\mu}_{r}^{\zeta}\rangle,\ldots,\langle f_{M}, \bm{\mu}_{r}^{\zeta}\rangle\bigr)\biggr) d \bar{N}_{\ell}(s, \bm{i}, \bm{y}, \theta) 
\\
& +\sum_{m=1}^{M} \sum_{j=1}^{J} \sum_{\ell=1}^{L} \int_{s}^{t} \frac{\partial \varphi}{\partial x_{(m-1) J+j}}\bigl(\langle f_{1}, \bm{\mu}_{r}^{\zeta}\rangle,\langle f_{2}, \bm{\mu}_{r}^{\zeta}\rangle \ldots\langle f_{M}, \bm{\mu}_{r}^{\zeta}\rangle\bigr) \\
& \qquad \times \int_{\tilde{\mathbb{X}}(\ell)} \frac{1}{\bm{\alpha}^{(\ell)} !} K_{\ell}(\bm{x})\biggl[\int_{\mathbb{Y}^{(\ell)}}\fsum{f_{j,m}} \biggl(m_{\ell}^{\eta}(\bm{y} | \bm{x})\pi_{\ell}^{\gamma}\bigl(\bm{y}|\bm{x}, \vmu_{r-}^{\zeta}(dx)\bigr)\\
& \qquad\qquad\qquad\qquad\qquad\qquad\qquad\qquad\qquad -m_{\ell}(\bm{y} | \bm{x})\pi_{\ell}\bigl(\bm{y}|\bm{x}, \vmu_{r-}^{\zeta}(d x)\bigr)\biggr) d \bm{y}\biggr] \lambda^{(\ell)}[\mu_{r-}^{\zeta}](d \bm{x}) d r 
\\
& +\int_s^t \sum_{m=1}^{M} \sum_{j=1}^{J} \frac{\partial \varphi}{\partial x_{(m-1) * J+j}}\bigl(\langle f_{1}, \bm{\mu}_{r}^{\zeta}\rangle,\ldots, \langle f_{M}, \bm{\mu}_{r}^{\zeta}\rangle\bigr)\biggl(\frac{1}{\gamma}\int_{\mathbb{R}^{d}} \frac{\partial f_{j,m}}{\partial Q}(x) \cdot \nabla u_{j,j}(\|0\|) \mu_{r-}^{\zeta, j}(d x) \biggr)dr 
\\
& =\sum_{\kappa=1}^{6} \Lambda_{\kappa}^{\zeta}(t), \numberthis\label{ito}
\end{align*}
where $\Lambda_{\kappa}^{\zeta}(t)$ represents the $\kappa$th additive term on the right hand side. We now use the Skorokhod representation theorem (Theorem $1.8$ in \cite{EthierKurtz1986}) which, for the purposes of identifying the limit and proving \eqref{il}, allows us to assume that the aforementioned claimed convergence of $\bm{\mu}_{t}^{\bm{\zeta}}$ holds with probability one in the topology of weak convergence of measures. The Skorokhod representation theorem involves the introduction of another probability space, but we ignore this distinction in the notation. To show \eqref{il}, it is then sufficient to prove that the left-hand side of \eqref{ito} goes to zero in probability. With this goal in mind, we proceed to prove convergence in probability to zero for $\Lambda_{\kappa}^{\zeta}(t)$ for $\kappa=1, \cdots, 6$.

The fact that $\Lambda_{\kappa}^{\zeta}(t)$ for $\kappa=1, \cdots, 4$ converge to zero in probability follows from Section 7.3 in \cite{IsaacsonMa2022}. It remains to show that $\Lambda_5^\zeta$ and $\Lambda_6^\zeta$ converge to zero in probability. Note that
\begin{align*}
\Lambda_5^\zeta &= \sum_{m=1}^{M} \sum_{j=1}^{J} \sum_{\ell=1}^{L} \int_{s}^{t} \frac{\partial \varphi}{\partial x_{(m-1) J+j}}\bigl(\langle f_{1}, \bm{\mu}_{r}^{\zeta}\rangle,\langle f_{2}, \bm{\mu}_{r}^{\zeta}\rangle \ldots\langle f_{M}, \bm{\mu}_{r}^{\zeta}\rangle\bigr) \int_{\tilde{\mathbb{X}}(\ell)} \frac{1}{\bm{\alpha}^{(\ell)} !} K_{\ell}(\bm{x})\\
&\phantom{=} \times \biggl[\int_{\mathbb{Y}^{(\ell)}}\fsum{f_{j,m}} \biggl( \bigl(\pi_{\ell}^{\gamma}\bigl(\bm{y}|\bm{x}, \vmu_{r-}^{\zeta}(dx)\bigr)-\pi_{\ell}\bigl(\bm{y}|\bm{x}, \vmu_{r-}^{\zeta}(dx)\bigr) \bigr) m_{\ell}^{\eta}(\bm{y} | \bm{x}) \\
&\phantom{=} \qquad\qquad\qquad\qquad\qquad\qquad+\pi_{\ell}\bigl(\bm{y}|\bm{x}, \vmu_{r-}^{\zeta}(dx)\bigr)\bigl(m_{\ell}^{\eta}(\bm{y} | \bm{x})-m_{\ell}(\bm{y}|\bm{x})\bigr)\biggr)d \bm{y}\biggr]\lambda^{(\ell)}[\mu_{r-}^{\zeta}](d \bm{x}) d r\\
&\leq \sum_{m=1}^{M} \sum_{j=1}^{J} \sum_{\ell=1}^{L} \int_{s}^{t} \biggl|\frac{\partial \varphi}{\partial x_{(m-1) J+j}}\bigl(\langle f_{1}, \bm{\mu}_{r}^{\zeta}\rangle,\langle f_{2}, \bm{\mu}_{r}^{\zeta}\rangle \ldots\langle f_{M}, \bm{\mu}_{r}^{\zeta}\rangle\bigr)\biggr| \int_{\tilde{\mathbb{X}}(\ell)} \frac{1}{\bm{\alpha}^{(\ell)} !} K_{\ell}(\bm{x})\\
&\phantom{=}  \times \biggl[\biggl|\int_{\mathbb{Y}^{(\ell)}}\fsum{f_{j,m}}m_{\ell}^{\eta}(\bm{y} | \bm{x}) \bigl(\pi_{\ell}^{\gamma}\bigl(\bm{y}|\bm{x}, \vmu_{r-}^{\zeta}(dx)\bigr)-\pi_{\ell}\bigl(\bm{y}|\bm{x}, \vmu_{r-}^{\zeta}(dx)\bigr)\bigr)d \bm{y}\biggr|\\
&\phantom{=} +\biggl|\int_{\mathbb{Y}^{(\ell)}}\fsum{f_{j,m}}\pi_{\ell}\bigl(\bm{y}|\bm{x}, \vmu_{r-}^{\zeta}(dx)\bigr)\bigl(m_{\ell}^{\eta}(\bm{y} | \bm{x})-m_{\ell}(\bm{y} | \bm{x})\bigr)d \bm{y}\biggr|\biggr]\lambda^{(\ell)}[\mu_{r-}^{\zeta}](d \bm{x}) d r\\
&\leq \sum_{m=1}^{M} \sum_{j=1}^{J} \sum_{\ell=1}^{L} \int_{s}^{t} \biggl|\frac{\partial \varphi}{\partial x_{(m-1) J+j}}\bigl(\langle f_{1}, \bm{\mu}_{r}^{\zeta}\rangle,\langle f_{2}, \bm{\mu}_{r}^{\zeta}\rangle \ldots\langle f_{M}, \bm{\mu}_{r}^{\zeta}\rangle\bigr)\biggr| \int_{\tilde{\mathbb{X}}(\ell)} \frac{1}{\bm{\alpha}^{(\ell)} !} K_{\ell}(\bm{x})\\
&\phantom{=}  \times \biggl[\displaystyle\sup_{\bm{y} \in \mathbb{Y}^{(\ell)}, \bm{x} \in \mathbb{X}^{(\ell)}}\bigl|\pi_{\ell}\bigl(\bm{y}|\bm{x}, \vmu_{r-}^{\zeta}(dx)\bigr)-\pi_{\ell}^{\gamma}\bigl(\bm{y}|\bm{x}, \vmu_{r-}^{\zeta}(dx)\bigr) \bigr| \int_{\mathbb{Y}^{(\ell)}}\bigl|\fsum{f_{j,m}}\bigr|m_{\ell}^{\eta}(\bm{y}|\bm{x})d \bm{y}\\
&\phantom{=} +\biggl|\int_{\mathbb{Y}^{(\ell)}}\fsum{f_{j,m}} \pi_{\ell}\bigl(\bm{y}|\bm{x}, \vmu_{r-}^{\zeta}(dx)\bigr)\bigl(m_{\ell}^{\eta}(\bm{y} | \bm{x})-m_{\ell}(\bm{y} | \bm{x})\bigr)d \bm{y}\biggr|\biggr]\lambda^{(\ell)}[\mu_{r-}^{\zeta}](d \bm{x}) d r.\numberthis
\end{align*}
We note the following estimate proven in Appendix~\ref{S:mollifierconvlemmaproof}.
\begin{lemma} 
For any $\eta \geq 0$ small enough, $\tilde{L}+1 \leq \ell \leq L, \bm{y} \in \mathbb{Y}^{(\ell)}, \bm{x} \in \mathbb{X}^{(\ell)}$, and $f \in C_{b}^{2}(\mathbb{Y}^{(\ell)})$, there exists a constant $C$ such that
\begin{equation*}
\biggl|\int_{\mathbb{Y}^{(\ell)}} f(\bm{y})\pi_{\ell}\bigl(\bm{y}|\bm{x}, \vmu_{r-}^{\zeta}(dx)\bigr) \bigl(m_{\ell}^{\eta}(\bm{y} | \bm{x})-m_{\ell}(\bm{y} | \bm{x})\bigr) d \bm{y}\biggr| \leq C\|f\|_{C_{b}^{1}(\mathbb{Y}^{(\ell)})}{\eta}. \label{m-m}
\end{equation*}
\end{lemma}
With the aid of Assumption \ref{convpi}, Lemma \ref{m-m}, and the assumptions that $f$ and its partial derivatives are uniformly bounded, we then have $\displaystyle\lim _{\zeta\rightarrow 0} \displaystyle\sup _{t \in[0, T]} \mathbb{E}|\Lambda_{5}^{\zeta}(t)|=0$. Incorporating Assumption \eqref{anorm} we obtain
\begin{equation*}
      \abs{\frac{1}{\gamma}\int_{\mathbb{R}^{d}} \frac{\partial f_{j,m}}{\partial Q}(x) \cdot \nabla u_{j,j}(\|0\|) \mu_{r-}^{\zeta, j}(d x)}
  \leq \frac{C_{\circ}}{\gamma}\|f_{j,m}\|_{C^1_b(\mathbb{R^d})}\|u\|_{C^1(\mathbb{R}^{2d})}.
\end{equation*}
We then have that $\displaystyle\lim _{\zeta\rightarrow 0} \displaystyle\sup _{t \in[0, T]} \mathbb{E}|\Lambda_{6}^{\zeta}(t)|=0.$ Thus, the left-hand side of \eqref{ito} goes to zero in probability, concluding the proof of the lemma.

\end{proof}
We have shown that if the weak limit of the marginal distribution $(\mu_{t}^{\zeta, 1}, \mu_{t}^{\zeta, 2} \ldots, \mu_{t}^{\zeta, J})$ exists and is unique (uniqueness is shown in Subsection \ref{SS:Uniqueness}), then as $\zeta$ goes to zero, it will converge to the limiting particle distribution $(\xi_{t}^{1}, \xi_{t}^{2}, \cdots, \xi_{t}^{J})$ in distribution with respect to the topology of weak convergence of measures.

\subsection{Uniqueness of the limiting solution. }\label{SS:Uniqueness}
We now show that the solution to~\eqref{evl_noLtilde} is unique in $C_{M_{F}(\mathbb{R}^d)}([0,T])$. $C$ will subsequently denote a generic constant. Suppose, by contradiction, that we have two different solutions to~\eqref{evl_noLtilde}, $\{\vxi_t \coloneqq (\xi_{t}^{1}, \xi_{t}^{2}, \cdots, \xi_{t}^{J})\}_{t \in[0, T]}$ and $\{\vxibar_t \coloneqq (\bar{\xi}_{t}^{1}, \bar{\xi}_{t}^{2}, \cdots, \bar{\xi}_{t}^{J})\}_{t \in[0, T]}$, with the same initial condition $\vxi_{0}=\vxibar_{0}$. Parallel to Eq (\ref{evl_noLtilde}), for a test function of the form of $\psi_{t}(x) \in C_{b}^{1,2}(\mathbb{R}_{+} \times \mathbb{R}^{d})$, we get
\begin{equation}\label{sol}
\begin{aligned}
\langle\psi_{t}, \xi_{t}^{j}\rangle &= \langle\psi_{0}, \xi_{0}^{j}\rangle+\int_{0}^{t}\langle\partial_{s} \psi_{s}+(\mathcal{L}_{j} \psi_{s})(x), \xi_{s}^{j}(d x)\rangle d s +\int_{0}^{t}\bigl\langle\sum_{j'=1}^{J}\langle(\tilde{\mathcal{L}}_{j,j'} \psi_s)(x,y), \xi_{s}^{j'}(dy)\rangle,\xi_{s}^{j}(d x)\bigr\rangle d s\\
&+\sum_{\ell=1}^{L}\int_{0}^{t} \int_{\tilde{\mathbb{X}}(\ell)} \frac{1}{\bm{\alpha}^{(\ell)}!} K_{\ell}(\bm{x})\biggl(\int_{\mathbb{Y}^{(\ell)}}\fsum{\psi_s} m_{\ell}(\bm{y} | \bm{x})
\pi_{\ell} \bigl(\bm{y}|\bm{x}, \vxi_{s}(dx)\bigr) d \bm{y}\biggr)\lambda^{(\ell)}[\xi_{s}](d \bm{x}) ds,
\end{aligned}
\end{equation}
recalling~\eqref{eq:GammaFDef}.

Let $\mathcal{P}_{j, t}, t \geq 0$, be the semigroup generated by $\mathcal{L}_{j}, \text{ with } (\mathcal{L}_{j}f)(x) = D_j \Delta_x f(x) - \nabla_x f(x) \cdot \nabla_x v_{j}(x) \text{ for } j=1,2, \cdots, J$. Choose $\psi_{s}(x;t)=\mathcal{P}_{j, t-s} f(x)$, respectively for each $1 \leq j \leq J$, where $f \in C_{b}^{2}(\mathbb{R}^{d}) \text{ and }\|f\|_{L^{\infty}} \leq 1$, with $s\leq t$. Using the semigroup property, we then have
\begin{align*}
    \partial_{s} \psi_{s}(x;t) &= \partial_{s} (\mathcal{P}_{j,t-s}f)(x)\\
    &= - \displaystyle \lim_{h\rightarrow 0} \frac{(\mathcal{P}_{j,t-s+h}f)(x)-(\mathcal{P}_{j,t-s}f)(x)}{h} \\
    &= - \displaystyle \lim_{h\rightarrow 0}\frac{\mathcal{P}_{j,h}(\mathcal{P}_{j,t-s}f)(x)-(\mathcal{P}_{j,t-s}f)(x)}{h}\\
    &= -(\mathcal{L}_{j}\mathcal{P}_{j,t-s}f)(x)\\
    &= -(\mathcal{L}_{j}\psi_s)(x;t),
\end{align*}
and Eq \eqref{sol} becomes
\begin{equation}\label{group}
\begin{aligned}
\langle &f, \xi_{t}^{j}\rangle =\langle \mathcal{P}_{j,t}f, \xi_{0}^{j}\rangle+\int_{0}^{t}\bigl\langle\sum_{j'=1}^J\langle(\tilde{\mathcal{L}}_{j,j'} \mathcal{P}_{j,t-s}f)(x,y), \xi_{s}^{j'}(dy)\rangle,\xi_{s}^{j}(d x)\bigr\rangle d s\\
&\phantom{=} +\sum_{\ell=1}^{L}\int_{0}^{t} \int_{\tilde{\mathbb{X}}(\ell)} \frac{1}{\bm{\alpha}^{(\ell)}!} K_{\ell}(\bm{x})\biggl(\int_{\mathbb{Y}^{(\ell)}}\fsum{\mathcal{P}_{j,t-s}f} m_{\ell}(\bm{y} | \bm{x}) \pi_{\ell} \bigl(\bm{y}|\bm{x}, \vxi_{s}(dx)\bigr) d \bm{y}\biggr)\lambda^{(\ell)}[\xi_{s}](d \bm{x}) d s.
\end{aligned}
\end{equation}
We then get
\begin{align*}
&|\langle f, \xi_{t}^{j}-\bar{\xi}_{t}^{j}\rangle|  \leq \int_{0}^{t}\biggl|\bigl\langle\sum_{j'=1}^J\langle(\tilde{\mathcal{L}}_{j,j'} \mathcal{P}_{j,t-s}f)(x,y), \bigl(\xi_s^{j'}(dy)-\bar{\xi}_s^{j'}(dy)\bigr)\rangle,\xi_{s}^{j}(d x)-\bar{\xi}_s^j(dx)\bigr\rangle\biggr| d s\\
&\phantom{=} +\int_{0}^{t}\biggl|\bigl\langle\sum_{j'=1}^J\langle(\tilde{\mathcal{L}}_{j,j'} \mathcal{P}_{j,t-s}f)(x,y), \bigl(\xi_{s}^{j'}(d y)-\bar{\xi}_s^{j'}(dy)\bigr)\rangle,\bar{\xi}_s^{j}(dx)\bigr\rangle\biggr| d s\\
&\phantom{=} +\int_{0}^{t}\biggl|\bigl\langle\sum_{j'=1}^J\langle(\tilde{\mathcal{L}}_{j,j'} \mathcal{P}_{j,t-s}f)(x,y), \bar{\xi}_s^{j'}(dy)\rangle,\xi_{s}^{j}(d x)-\bar{\xi}_s^j(dx)\bigr\rangle\biggr| d s\\
&\phantom{=} +\sum_{\ell=1}^{L} \int_{0}^{t} \biggl| \int_{\tilde{\mathbb{X}}(\ell)} \frac{1}{\bm{\alpha}^{(\ell)!}}K_{\ell}(\bm{x})\int_{\mathbb{Y}^{(\ell)}}\fsum{\mathcal{P}_{j,t-s}f} m_{\ell}(\bm{y} | \bm{x})\pi_{\ell} \bigl(\bm{y}|\bm{x}, \vxi_{s}(dx)\bigr) d\bm{y}\lambda^{(\ell)}[\xi_{s}](d \bm{x})\\
&\phantom{=} -\int_{\tilde{\mathbb{X}}(\ell)}\frac{1}{\bm{\alpha}^{(\ell)!}}K_{\ell}(\bm{x})\int_{\mathbb{Y}^{(\ell)}}\fsum{\mathcal{P}_{j,t-s}f} m_{\ell}(\bm{y} | \bm{x})\pi_{\ell} \bigl(\bm{y}|\bm{x}, \vxibar_{s}(dx)\bigr)d \bm{y}\lambda^{(\ell)}[\bar{\xi}_{s}](d \bm{x})\biggr| d s\numberthis\label{gw}.
\end{align*}

Recall that $\psi_{s}(x;t)$ solves
\begin{align*}
    \partial_{s} \psi_{s}(x;t) +(\mathcal{L}_{j}\psi_s)(x;t)&=0, \quad s\in[0,t]\nonumber\\
    \psi_{t}(x;t)&=f(x).
\end{align*}
By the Bismut-Elworthy formula (see Proposition 9.22 and Corollary 9.23 of \cite{DaPrato2014} or Theorem 3.2.4 of \cite{Stroock2008}), we have the following estimate for its gradient $\nabla_{x}\psi_{s}(x)$
\begin{align*}
|\nabla_{x}\psi_{s}(x;t)|&\leq C \frac{1}{1\wedge (t-s)^{1/2}}\|f\|_{C^{0}_{b}(\mathbb{R}^{d})},\quad \text{ for every }x\in\mathbb{R}^{d},
\end{align*}
for some finite constant $C<\infty$. Consequently, we obtain
\begin{align*}
     |(\tilde{\mathcal{L}}_{j,j'}\mathcal{P}_{j,t-s}f)(x,y)|&\leq C\norm{f}_{C^0_b(\mathbb{R^d})}\|u\|_{C^1(\mathbb{R}^{d\times2})} \frac{1}{1\wedge (t-s)^{1/2}}\nonumber\\
     &\leq C_3 \frac{1}{1\wedge (t-s)^{1/2}}, \quad \text{ for every }x,y\in\mathbb{R}^{d},
\end{align*}
where $C_3=C \|u\|_{C^1(\mathbb{R}^{d\times2})}$. Note that $C_3<\infty$ as $\norm{f}_{L^\infty} \leq 1$ and $\|u\|_{C^1(\mathbb{R}^{d\times2})}$ is bounded by Assumption \ref{anorm}.

Additionally, since $\sup_{t \in [0,T]} \|\mathcal{P}_{j,t} f\|_{L^\infty} \leq C<\infty$ as $\norm{f}_{L^\infty} \leq 1$ (see Chapter 4 of \cite{Pazy1983}), we get
\begin{align*}
&\sum_{\ell=1}^{L} \int_{0}^{t}  \biggr|\int_{\tilde{\mathbb{X}}(\ell)} \frac{1}{\bm{\alpha}^{(\ell)!}}K_{\ell}(\bm{x}) \int_{\mathbb{Y}^{(\ell)}}\left(\sum_{r=1}^{\beta_{\ell j}} |\mathcal{P}_{j, t-s} f(y_{r}^{(j)})|+\sum_{r=1}^{\alpha_{\ell j}} |\mathcal{P}_{j, t-s} f(x_{r}^{(j)})|\right)\\
&\qquad\qquad\qquad\qquad \times m_{\ell}(\bm{y} | \bm{x})\pi_{\ell} \bigl(\bm{y}|\bm{x}, \vxibar_{s}(dx)\bigr)d \bm{y}\bigl(\lambda^{(\ell)}[\xi_{s}](d \bm{x})-\lambda^{(\ell)}[\bar{\xi}_{s}](d \bm{x})\bigr)\biggr|ds\\
&\leq C(K) \sum_{\ell=1}^{L} \frac{\alpha_{\ell j}+\beta_{\ell j}}{\bm{\alpha}^{(\ell)}!} \int_{0}^{t}\|\lambda^{(\ell)}[\xi_{s}]-\lambda^{(\ell)}[\bar{\xi}_{s}]\|_{M_{F}(\mathbb{X}^{(\ell)})} d s,
\end{align*}
and by Assumption \ref{lippi}
\begin{align*}
&\sum_{\ell=1}^{L} \int_{0}^{t}  \int_{\tilde{\mathbb{X}}(\ell)} \biggr|\frac{1}{\bm{\alpha}^{(\ell)!}}K_{\ell}(\bm{x}) \int_{\mathbb{Y}^{(\ell)}}\left(\sum_{r=1}^{\beta_{\ell j}} |\mathcal{P}_{j, t-s} f(y_{r}^{(j)})|+\sum_{r=1}^{\alpha_{\ell j}} |\mathcal{P}_{j, t-s} f(x_{r}^{(j)})|\right)m_{\ell}(\bm{y} | \bm{x})\\
& \qquad\qquad\qquad\qquad\qquad \times\biggl(\pi_{\ell} \bigl(\bm{y}|\bm{x}, \vxi_{s}(dx)\bigr)-\pi_{\ell} \bigl(\bm{y}|\bm{x}, \vxibar_{s}(dx)\bigr)\biggr)d \bm{y}\biggr|\lambda^{(\ell)}[\xi_{s}](d \bm{x})ds\\
&\leq C(K) P \sum_{\ell=1}^{L} \frac{\alpha_{\ell j}+\beta_{\ell j}}{\bm{\alpha}^{(\ell)}!} \int_{0}^{t}\sum_{i=1}^J\|\xi_{s}^{i}-\bar{\xi}_{s}^{i}\|_{M_{F}(\mathbb{R}^{d})} d s.
\end{align*}

With $M=1 \vee \displaystyle\sup_{\{t \in[0, T], j=1,2, \cdots, J\}}|\langle 1, \xi_{t}^{j}\rangle| \vee|\langle 1, \bar{\xi}_{t}^{j}\rangle|<\infty$ and using the above estimates, we can further bound \eqref{gw} by
\begin{align*}
|\langle f, \xi_{t}^{j}-\bar{\xi}_{t}^{j}\rangle|
& \leq C_3\int_0^t \sum_{i=1}^J\|\xi_{s}^{i}-\bar{\xi}_{s}^{i}\|_{M_{F}(\mathbb{R}^{d})}\bigl(\langle1,\xi_{s}^{j}\rangle +2\langle1,\bar{\xi}_s^j\rangle\bigr) \frac{1}{1\wedge (t-s)^{1/2}} ds \\
&+ C_3\int_0^t \bigl(\sum_{i=1}^J\|\bar{\xi}_{s}^{i}\|_{M_{F}(\mathbb{R}^{d})}\bigr) \|\xi_{s}^{j}-\bar{\xi}_{s}^{j}\|_{M_{F}(\mathbb{R}^{d})} \frac{1}{1\wedge (t-s)^{1/2}}ds \\
&+ 2C(K) \sum_{\ell=1}^{L} \frac{\alpha_{\ell j}+\beta_{\ell j}}{\bm{\alpha}^{(\ell)}!} \int_{0}^{t}\|\otimes_{i=1}^{J}(\otimes_{r=1}^{\alpha_{\ell i}} \xi_{s}^{i})-\otimes_{i=1}^{J}(\otimes_{r=1}^{\alpha_{\ell i}} \bar{\xi}_{s}^{i})\|_{M_{F}(\mathbb{X}^{(\ell)})} d s \\
&+C(K)P \sum_{\ell=1}^{L} \frac{\alpha_{\ell j}+\beta_{\ell j}}{\bm{\alpha}^{(\ell)}!} \int_{0}^{t}\sum_{i=1}^J\|\xi_{s}^{i}-\bar{\xi}_{s}^{i}\|_{M_{F}(\mathbb{R}^{d})} d s\\
& \leq (3MC_3+C(C_{\circ})C_3)\int_0^t \sum_{i=1}^J\|\xi_{s}^{i}-\bar{\xi}_{s}^{i}\|_{M_{F}(\mathbb{R}^{d})}\frac{1}{1\wedge (t-s)^{1/2}}ds \\
&+2C(K) \sum_{\ell=1}^{L} \frac{\alpha_{\ell j}+\beta_{\ell j}}{\bm{\alpha}^{(\ell)}!} \int_{0}^{t} M^{|\alpha^{(\ell)}|-1} \sum_{i=1}^{J} \alpha_{\ell i}\|\xi_{s}^{i}-\bar{\xi}_{s}^{i}\|_{M_{F}(\mathbb{R}^{d})} d s\\
& +C(K)P\sum_{\ell=1}^{L} \frac{\alpha_{\ell j}+\beta_{\ell j}}{\bm{\alpha}^{(\ell)}!}\int_0^t \sum_{i=1}^J\|\xi_{s}^{i}-\bar{\xi}_{s}^{i}\|_{M_{F}(\mathbb{R}^{d})}ds.\numberthis
\end{align*}

Summing over all particle types, we get
\begin{align*}
&\sum_{j=1}^{J}\|\xi_{t}^{j}-\bar{\xi}_{t}^{j}\|_{M_{F}(\mathbb{R}^{d})} \leq \sum_{j=1}^{L} (3MC_3+\textcolor{black}{C(C_{\circ})C_3}+C(K)P\sum_{\ell=1}^{L} \frac{\alpha_{\ell j}+\beta_{\ell j}}{\bm{\alpha}^{(\ell)}!})\\
&\qquad \times \int_0^t \sum_{i=1}^J\|\xi_{s}^{i}-\bar{\xi}_{s}^{i}\|_{M_{F}(\mathbb{R}^{d})}\frac{1}{1\wedge (t-s)^{1/2}}ds \\
&\qquad+2C(K) \sum_{\ell=1}^{L} \sum_{j=1}^{J} \frac{\alpha_{\ell j}+\beta_{\ell j}}{\bm{\alpha}^{(\ell)}!} \int_{0}^{t} M^{| \alpha^{(\ell)}|-1} \sum_{i=1}^{J} \alpha_{\ell i}\|\xi_{s}^{i}-\bar{\xi}_{s}^{i}\|_{M_{F}(\mathbb{R}^{d})}\frac{1}{1\wedge (t-s)^{1/2}} d s \\
&\quad \leq \biggl(3MLC_3+\textcolor{black}{C(C_{\circ})C_3}+\bigl(LJC(K)P+2LJC(K)\bigr) \max _{1 \leq \ell \leq L, 1 \leq j \leq J}\bigl(\frac{\alpha_{\ell j}+\beta_{\ell j}}{\bm{\alpha}^{(\ell)}!} M^{| \alpha^{(\ell)} |-1} \alpha_{\ell j}\bigr)\biggr)\\
&\qquad\times\int_{0}^{t} \sum_{i=1}^{J}\|\xi_{s}^{i}-\bar{\xi}_{s}^{i}\|_{M_{F}(\mathbb{R}^{d})}\frac{1}{1\wedge (t-s)^{1/2}} d s\nonumber\\
&\quad\leq C \int_{0}^{t} \sum_{i=1}^{J}\|\xi_{s}^{i}-\bar{\xi}_{s}^{i}\|_{M_{F}(\mathbb{R}^{d})}\frac{1}{1\wedge (t-s)^{1/2}} d s,
\end{align*}
for some unimportant constant $C<\infty$. The integrable singularity of the kernel, $(1\wedge \sqrt{t-s})^{-1}$, can be handled using either of the weakly singular Gr\"{o}nwall Inequalities Lemma 7.1.1 of \cite{Henry1981} or Theorem 3.2 in \cite{Webb2019}. We then obtain that \[
\sum_{j=1}^{J}\sup_{t\in[0,T]}\|\xi_{t}^{j}-\bar{\xi}_{t}^{j}\|_{M_{F}(\mathbb{R}^{d})} = 0,
\]
  concluding the proof of the uniqueness of the limiting solution.

\section{Appendix}\label{S:Appendix}
\subsection{Verifications of assumptions for the  Fr\"{o}hner-No\'e model}\label{SS:MF_AcceptanceProbability}

We verify here that all forms of the acceptance probabilities for the generalized Fr\"{o}hner-No\'e model, i.e., \eqref{pduf}, \eqref{pdub}, and \eqref{pd}, satisfy Assumptions \eqref{lippi}, \eqref{lippiiny} and \eqref{convpi}. As such, they satisfy the conditions under which our main result, Theorem \ref{T:MainTheorem}, holds.

We will prove the next two lemmas for the limit of the acceptance probability given by \eqref{pduf}. Proofs for the other two cases are analogous. Consider $\pi_{\ell} = \pi_1$ for the following two proofs, and recall the definition of $\hat{M}_F$ in~\eqref{eq:Mhatdef}.
\begin{lemma}
Consider any $\gamma \geq 0, \tilde{L}+1 \leq \ell \leq L, \bm{y} \in \mathbb{Y}^{(\ell)}, \bm{x} \in \mathbb{X}^{(\ell)}$ and $\vxi = (\xi^{1},\xi^{2},\cdots,\xi^{J})$, $\vxibar = (\bxi^{1},\bxi^{2},\cdots,\bxi^{J})$ both in $\otimes_{j=1}^J \hat{M}_{F}(\mathbb{R}^{d}; C_{\circ})$. Then the acceptance probability $\pi_{\ell}\bigl(\bm{y}|\bm{x}, \vxi(dx)\bigr)= \min \{1, e^{-(\Phi_{1}^{+}(\bm{y},\vxi)-\Phi_{1}^{-}(\bm{x},\vxi))}\}$ is bounded and Lipschitz continuous with Lipschitz constant $P$, i.e.,
\begin{equation}
     \displaystyle\sup_{\bm{y} \in \mathbb{Y}^{(\ell)}, \bm{x} \in \mathbb{X}^{(\ell)}}\bigl|\pi_{\ell}\bigl(\bm{y}|\bm{x}, \vxi(dx)\bigr)-\pi_{\ell} \bigl(\bm{y}|\bm{x}, \vxibar(dx)\bigl)\bigr|\leq P \sum_{i=1}^J\|\xi^{i}-\bar{\xi}^{i}\|_{M_{F}(\mathbb{R}^{d})}.
\end{equation}
\end{lemma}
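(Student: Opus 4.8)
The plan is to reduce everything to the elementary fact that $x \mapsto \min\{1, e^{-x}\}$ is globally $1$-Lipschitz on $\mathbb{R}$, so that
\begin{equation*}
    \bigl|\pi_{\ell}\bigl(\bm{y}|\bm{x}, \vxi(dx)\bigr)-\pi_{\ell}\bigl(\bm{y}|\bm{x}, \vxibar(dx)\bigr)\bigr|
    \leq \bigl|\bigl(\Phi_1^{+}(\bm{y},\vxi)-\Phi_1^{-}(\bm{x},\vxi)\bigr) - \bigl(\Phi_1^{+}(\bm{y},\vxibar)-\Phi_1^{-}(\bm{x},\vxibar)\bigr)\bigr|.
\end{equation*}
First I would recall the explicit mean field forms of $\Phi_1^{\pm}$ derived in Section~\ref{S:FrohnerNoeModel}, namely that $\Phi_{\ell}^{-}(\bm{x},\vxi) = V(\bm{x}) + \sum_{j}\sum_{r=1}^{\alpha_{\ell j}}\sum_{j'}\int_{\mathbb{R}^d} u_{j,j'}(x_r^{(j)},x')\,\xi^{j'}(dx')$ and similarly for $\Phi_{\ell}^{+}(\bm{y},\vxi)$ with $\bm{y}$ and $\beta_{\ell j}$ in place of $\bm{x}$ and $\alpha_{\ell j}$. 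Crucially, the one-body terms $V(\bm{x})$ and $V(\bm{y})$ do not depend on the measure argument, so they cancel exactly in the difference above, leaving only linear functionals of $\vxi - \vxibar$.

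The second step is to bound each surviving term. After cancellation, the right-hand side is at most
\begin{equation*}
    \sum_{j=1}^{J}\sum_{r=1}^{\alpha_{\ell j}}\sum_{j'=1}^{J}\Bigl|\int_{\mathbb{R}^d} u_{j,j'}(x_r^{(j)},x')\,(\xi^{j'}-\bxi^{j'})(dx')\Bigr|
    + \sum_{j=1}^{J}\sum_{r=1}^{\beta_{\ell j}}\sum_{j'=1}^{J}\Bigl|\int_{\mathbb{R}^d} u_{j,j'}(y_r^{(j)},x')\,(\xi^{j'}-\bxi^{j'})(dx')\Bigr|.
\end{equation*}
For fixed $x_r^{(j)}$ the map $x' \mapsto u_{j,j'}(x_r^{(j)},x')$ lies in $L^\infty(\mathbb{R}^d)$ with norm at most $\norm{u_{j,j'}}_{C^1(\mathbb{R}^{2d})} \leq C$ by Assumption~\ref{anorm}, so each integral is bounded by $C\,\|\xi^{j'}-\bxi^{j'}\|_{M_F(\mathbb{R}^d)}$ via the definition of the variation norm. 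Summing, and using $|\bm{\alpha}^{(\ell)}|, |\bm{\beta}^{(\ell)}| \leq 2$ to control the number of terms, yields the desired bound with $P = 2C J \cdot (\text{number of substrate/product slots}) \leq $ an absolute constant depending only on $C$ and $J$; boundedness of $\pi_\ell$ by $1$ is immediate from the $\min$.

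I do not anticipate a genuine obstacle here — this is essentially a linearity-plus-cancellation argument. The only point requiring minor care is making sure that the test function $x' \mapsto u_{j,j'}(x_r^{(j)},x')$ is genuinely admissible in the variation-norm supremum (it is merely $L^\infty$, not $C_b^2$, but the equivalent $C_b^2$ formulation of $\|\cdot\|_{M_F}$ stated just before Assumption~\ref{lippi} could be invoked instead, approximating $u_{j,j'}(x_r^{(j)},\cdot)$ by smooth bounded functions if one wants to stay within $C_b^2$). One should also note the bound is uniform in $\bm{y}\in\mathbb{Y}^{(\ell)}$ and $\bm{x}\in\mathbb{X}^{(\ell)}$ since $C$ does not depend on those positions, which is exactly what the supremum in the statement requires.
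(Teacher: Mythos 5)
Your proposal is correct, and it reaches the same final estimate as the paper, but the route through the outer function is slightly different and a bit cleaner. The paper first drops the $\min$, factors out the one-body exponential prefactor $e^{V(\bm{x})-V(\bm{y})}$ (bounded by Assumption \ref{anorm}), and then invokes \emph{local} Lipschitz continuity of the exponential, which requires knowing that the two-body integral terms stay in a compact range — this is where the uniform mass bound $\langle 1,\xi^j\rangle \le C_{\circ}$ enters, so the paper's Lipschitz constant depends on both the one-body potential bound and $C_{\circ}$. You instead observe that $x \mapsto \min\{1,e^{-x}\} = e^{-\max\{0,x\}}$ is \emph{globally} $1$-Lipschitz, so the whole comparison reduces to the difference of the exponents, in which the one-body terms $V(\bm{x})$, $V(\bm{y})$ cancel exactly because they do not depend on the measure argument; what remains is a finite sum of linear functionals of $\xi^{j'}-\bxi^{j'}$ with kernels bounded by $\norm{u_{j,j'}}_{C^1(\mathbb{R}^{2d})}$, and the variation-norm definition (which in this paper is taken over $L^\infty$ test functions, so your worry about needing $C_b^2$ approximants is moot) finishes the proof. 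What your version buys is a constant $P$ that depends only on the two-body potential bound and on $|\bm{\alpha}^{(\ell)}|+|\bm{\beta}^{(\ell)}|\le 4$ and $J$, independent of $C_{\circ}$ and of the one-body potentials; what the paper's version buys is a template that it reuses verbatim for the subsequent Lipschitz-in-$\bm{y}$ and $\gamma\to\infty$ convergence lemmas, where the exponent differences do not cancel so conveniently and the local-Lipschitz-on-a-bounded-range argument is genuinely needed.
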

\begin{proof} Recalling that the potentials are all assumed to be uniformly bounded, we have
\begin{align*}
    \sup_{\bm{y} \in \mathbb{Y}^{(\ell)}, \bm{x} \in \mathbb{X}^{(\ell)}}&\bigl|\pi_{\ell}\bigl(\bm{y}|\bm{x}, \vxi(dx)\bigr) -\pi_{\ell} \bigl(\bm{y}|\bm{x}, \vxibar(dx)\bigl)\bigr|\\
    &\leq \sup_{\bm{y} \in \mathbb{Y}^{(\ell)}, \bm{x} \in \mathbb{X}^{(\ell)}}\biggl|e^{\sum_{j=1}^{J}\sum_{r=1}^{\alpha_{\ell j}}v_j(x^{(j)}_{r})-\sum_{j=1}^{J}\sum_{r=1}^{\beta_{\ell j}}v_j(y_{r}^{(j)})}\\
    &\phantom{=} \times \biggl(e^{\sum_{j=1}^J\sum_{j'=1}^J \bigl(\sum_{r=1}^{\alpha_{\ell j}}\int_{\mathbb{R}^d}u_{j,j'}(x^{(j)}_{r},x)\xi^{j'}(d x)-\sum_{r=1}^{\beta_{\ell j}}\int_{\mathbb{R}^d}u_{j,j'}(y_r^{(j)},x)\xi^{j'}(d x)\bigr)}\\
    &\phantom{=}  -e^{\sum_{j=1}^J\sum_{j'=1}^J \bigl(\sum_{r=1}^{\alpha_{\ell j}}\int_{\mathbb{R}^d}u_{j,j'}(x^{(j)}_{r},x)\bar{\xi}^{j'}(d x)-\sum_{r=1}^{\beta_{\ell j}}\int_{\mathbb{R}^d}u_{j,j'}(y_r^{(j)},x)\bar{\xi}^{j'}(d x)\bigr)}\biggr)\biggr|\\
    &\leq C \sup_{\bm{y} \in \mathbb{Y}^{(\ell)}, \bm{x} \in \mathbb{X}^{(\ell)}}\biggl|\sum_{j=1}^J\sum_{j'=1}^J \bigl(\sum_{r=1}^{\alpha_{\ell j}}\int_{\mathbb{R}^d}u_{j,j'}(x^{(j)}_{r},x)\xi^{j'}(d x)-\sum_{r=1}^{\beta_{\ell j}}\int_{\mathbb{R}^d}u_{j,j'}(y_r^{(j)},x)\xi^{j'}(d x)\bigr)\\
    &\phantom{=} - \sum_{j=1}^J\sum_{j'=1}^J \bigl(\sum_{r=1}^{\alpha_{\ell j}}\int_{\mathbb{R}^d}u_{j,j'}(x^{(j)}_{r},x)\bar{\xi}^{j'}(d x)-\sum_{r=1}^{\beta_{\ell j}}\int_{\mathbb{R}^d}u_{j,j'}(y_r^{(j)},x)\bar{\xi}^{j'}(d x)\bigr)\biggr|\\
    &\leq C \sup_{\bm{y} \in \mathbb{Y}^{(\ell)}, \bm{x} \in \mathbb{X}^{(\ell)}}
    \sum_{j=1}^J\sum_{j'=1}^J \Bigg[\sum_{r=1}^{\alpha_{\ell j}} \abs{\langle u_{j,j'}(x^{(j)}_{r},\cdot), \xi^{j'} - \bar{\xi}^{j'} \rangle} + \sum_{r=1}^{\beta_{\ell j}} \abs{\langle u_{j,j'}(y_r^{(j)},\cdot), \xi^{j'} - \bar{\xi}^{j'} \rangle} \Bigg]\\
    &\leq C P \sum_{i=1}^J\|\xi^{i}-\bar{\xi}^{i}\|_{M_{F}(\mathbb{R}^{d})}.
\end{align*}
The second inequality stems from the local Lipschitz continuity of exponential functions, the uniform bound on $\langle 1, \xi^j\rangle$ and $\langle 1, \bar{\xi}^j \rangle$, and the boundedness of the potentials implying global Lipschitz continuity. The last inequality is due to the uniform boundedness of the two-body potentials and the definition of the variation norm.
\end{proof}

\begin{lemma}
Consider any $\gamma \geq 0, \tilde{L}+1 \leq \ell \leq L, \bm{y}, \bm{y'} \in \mathbb{Y}^{(\ell)}, \bm{x} \in \mathbb{X}^{(\ell)}$ and $\vxi \coloneqq (\xi^{1},\xi^{2},\cdots,\xi^{J})$ in $\otimes_{j=1}^J \hat{M}_{F}(\mathbb{R}^{d}; C_{\circ})$. Then the acceptance probability $\pi_{\ell}\bigl(\bm{y}|\bm{x}, \vxi(dx')\bigr)= \min \{1, e^{-\bigl((\Phi_{1}^{+}(\bm{y},\vxi)-\Phi_{1}^{-}(\bm{x},\vxi)\bigr)}\}$ is bounded and Lipschitz continuous with Lipschitz constant $\tilde{P}$, i.e.,
\begin{equation}
    \sup_{\substack{\bm{x} \in \mathbb{X}^{(\ell)}\\\vxi \in \otimes_{j=1}^J M_{F}(\mathbb{R}^{d})}}\bigl|\pi_{\ell}\bigl(\bm{y}|\bm{x}, \vxi(dx')\bigr)-\pi_{\ell} \bigl(\bm{y'}|\bm{x}, \vxi(dx')\bigr)\bigr|\leq \tilde{P} \norm{\bm{y}-\bm{y'}}.
\end{equation}
\end{lemma}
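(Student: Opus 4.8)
The plan is to reduce the Lipschitz-in-$\bm{y}$ estimate to the same ingredients that were used in the previous lemma for the Lipschitz-in-measure estimate, namely the boundedness of all potentials and the local Lipschitz continuity of the exponential. First I would observe that the map $\min\{1,e^{-t}\}$ is $1$-Lipschitz in $t \in \mathbb{R}$, so it suffices to bound
\[
\bigl|\bigl(\Phi_{1}^{+}(\bm{y},\vxi)-\Phi_{1}^{-}(\bm{x},\vxi)\bigr)-\bigl(\Phi_{1}^{+}(\bm{y'},\vxi)-\Phi_{1}^{-}(\bm{x},\vxi)\bigr)\bigr| = \bigl|\Phi_{1}^{+}(\bm{y},\vxi)-\Phi_{1}^{+}(\bm{y'},\vxi)\bigr|,
\]
since the $\Phi_1^-(\bm{x},\vxi)$ terms cancel. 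Then recalling
\[
\Phi_{\ell}^{+}(\bm{y},\vxi) = \sum_{j=1}^{J}\sum_{r=1}^{\beta_{\ell j}} v_j(y_r^{(j)}) + \sum_{j=1}^{J}\sum_{r=1}^{\beta_{\ell j}}\sum_{j'=1}^{J}\int_{\mathbb{R}^d} u_{j,j'}(y_r^{(j)},x)\,\xi^{j'}(dx),
\]
the difference splits into a one-body part controlled by $\sum_{j,r} |v_j(y_r^{(j)}) - v_j(y_r'^{(j)})| \le \|v\|_{C^1}\sum_{j,r}|y_r^{(j)}-y_r'^{(j)}| \le \|v\|_{C^1}\,\sqrt{|\beta^{(\ell)}|}\,\norm{\bm{y}-\bm{y'}}$ (using Assumption \ref{anorm}), and a two-body part controlled by
\[
\sum_{j,r,j'} \Bigl| \int_{\mathbb{R}^d}\bigl(u_{j,j'}(y_r^{(j)},x)-u_{j,j'}(y_r'^{(j)},x)\bigr)\xi^{j'}(dx)\Bigr| \le \|u\|_{C^1(\mathbb{R}^{2d})}\,C_{\circ}\sum_{j,r,j'}|y_r^{(j)}-y_r'^{(j)}|,
\]
again by Assumption \ref{anorm} together with the uniform mass bound $\langle 1,\xi^{j'}\rangle \le C_{\circ}$. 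Wrapping in the local Lipschitz bound for the exponential — exactly as in the previous lemma, using that the exponent is uniformly bounded because the potentials are bounded and the measures have mass at most $C_{\circ}$ — gives the claimed bound with $\tilde{P}$ a constant depending on $J$, the reaction stoichiometry, $C_\circ$, $\|v\|_{C^1}$, and $\|u\|_{C^1}$.

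The only mild subtlety — and the step I would be most careful about — is that one must first observe $\min\{1,e^{-s}\}$ is globally $1$-Lipschitz on $\mathbb{R}$, so that even though the exponents themselves are not globally Lipschitz as functions of arbitrary real arguments, here they \emph{are} bounded (hence the composition with $\exp$ is locally Lipschitz on the relevant bounded range), and the outer $\min$ truncation does no harm. After that, the proof is the routine telescoping/triangle-inequality computation above. In the write-up I would simply state "Proofs for the other two cases are analogous" as the paper already does, since \eqref{pdub} and \eqref{pd} differ only by the harmless subtraction of $U^\gamma(\bm{x})$ or $U^\gamma(\bm{y})$ terms and a sign, none of which affects the $\bm{y}$-dependence structure.

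\begin{proof}
As in the previous lemma, recall that the potentials are all assumed to be uniformly bounded by Assumption \ref{anorm}, so that the exponents appearing below lie in a fixed bounded interval and $t \mapsto \min\{1, e^{-t}\}$ is $1$-Lipschitz on $\mathbb{R}$. Since the term $\Phi_1^{-}(\bm{x},\vxi)$ does not depend on $\bm{y}$, it cancels, and we obtain
\begin{align*}
    \sup_{\substack{\bm{x} \in \mathbb{X}^{(\ell)}\\ \vxi \in \otimes_{j=1}^J M_F(\mathbb{R}^d)}} & \bigl|\pi_{\ell}\bigl(\bm{y}|\bm{x}, \vxi(dx')\bigr)-\pi_{\ell} \bigl(\bm{y'}|\bm{x}, \vxi(dx')\bigr)\bigr|\\
    &\leq \sup_{\vxi \in \otimes_{j=1}^J \hat{M}_F(\mathbb{R}^d; C_{\circ})} \bigl|\Phi_{\ell}^{+}(\bm{y},\vxi) - \Phi_{\ell}^{+}(\bm{y'},\vxi)\bigr|\\
    &\leq \sum_{j=1}^{J}\sum_{r=1}^{\beta_{\ell j}}\abs{v_j(y_r^{(j)}) - v_j(y_r'^{(j)})}\\
    &\phantom{=} + \sum_{j=1}^{J}\sum_{r=1}^{\beta_{\ell j}}\sum_{j'=1}^{J}\abs{\int_{\mathbb{R}^d}\bigl(u_{j,j'}(y_r^{(j)},x) - u_{j,j'}(y_r'^{(j)},x)\bigr)\xi^{j'}(dx)}\\
    &\leq \norm{v}_{C^1} \sum_{j=1}^{J}\sum_{r=1}^{\beta_{\ell j}}\norm{y_r^{(j)} - y_r'^{(j)}} + C_{\circ}\norm{u}_{C^1(\mathbb{R}^{2d})}\sum_{j=1}^{J}\sum_{r=1}^{\beta_{\ell j}}\sum_{j'=1}^{J}\norm{y_r^{(j)} - y_r'^{(j)}}\\
    &\leq \tilde{P}\,\norm{\bm{y}-\bm{y'}},
\end{align*}
where in the last two inequalities we used Assumption \ref{anorm}, the uniform mass bound $\langle 1, \xi^{j'}\rangle \leq C_{\circ}$, and that $\sum_{j,r}\norm{y_r^{(j)} - y_r'^{(j)}} \leq \sqrt{|\bm{\beta}^{(\ell)}|}\,\norm{\bm{y}-\bm{y'}}$, absorbing all constants into $\tilde{P} = \bigl(\norm{v}_{C^1} + JC_{\circ}\norm{u}_{C^1(\mathbb{R}^{2d})}\bigr)\sqrt{|\bm{\beta}^{(\ell)}|}$. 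Boundedness of $\pi_{\ell}$ is immediate since it is a minimum with $1$ of a nonnegative quantity. The proofs for the acceptance probabilities \eqref{pdub} and \eqref{pd} are analogous, as they differ from \eqref{pduf} only by the subtraction of the $\bm{y}$-independent term $U^{\gamma}(\bm{x})$ or by the $\gamma$-vanishing term $U^{\gamma}(\bm{y})$, whose gradient is bounded by $\norm{u}_{C^1(\mathbb{R}^{2d})}/\gamma$ uniformly, and hence contributes a term of the same form.
\end{proof}
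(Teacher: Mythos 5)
Your proof is correct and follows essentially the same route as the paper: both reduce the estimate to the difference $\bigl|\Phi_{\ell}^{+}(\bm{y},\vxi)-\Phi_{\ell}^{+}(\bm{y'},\vxi)\bigr|$ and control it via the $C^1$ bounds on $v_j$ and $u_{j,j'}$ from Assumption \ref{anorm} together with the mass bound $\langle 1,\xi^{j'}\rangle\leq C_{\circ}$. The only (harmless, in fact slightly cleaner) deviation is that you invoke the global $1$-Lipschitz property of $t\mapsto\min\{1,e^{-t}\}$, so the $\Phi_{\ell}^{-}(\bm{x},\vxi)$ term cancels exactly, whereas the paper factors out the bounded exponential of the substrate part and uses local Lipschitz continuity of the exponential on the bounded range of exponents.
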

\begin{proof}
\begin{align*}
&\sup_{\substack{\bm{x} \in \mathbb{X}^{(\ell)}\\\vxi \in \otimes_{j=1}^J \hat{M}_{F}(\mathbb{R}^{d}; C_{\circ})}} \bigl|\pi_{\ell}\bigl(\bm{y}|\bm{x}, \vxi(dx')\bigr)-\pi_{\ell} \bigl(\bm{y'}|\bm{x}, \vxi(dx')\bigl)\bigr|\\
\leq&\sup_{\substack{\bm{x} \in \mathbb{X}^{(\ell)}\\\vxi \in \otimes_{j=1}^J  \hat{M}_{F}(\mathbb{R}^{d}; C_{\circ})}}\biggl|e^{\sum_{j=1}^{J}\sum_{r=1}^{\alpha_{\ell j}}\bigl(v_j(x^{(j)}_{r})+\sum_{j'=1}^J\int_{\mathbb{R}^d}u_{j,j'}(x^{(j)}_{r},x)\xi_{s}^{j'}(d x)\bigr)}\\
\phantom{=}& \times \biggl(e^{-\sum_{j=1}^{J}\sum_{r=1}^{\beta_{\ell j}}\bigl(v_j(y_{r}^{(j)})+\sum_{j'=1}^J\int_{\mathbb{R}^d}u_{j,j'}(y_r^{(j)},x)\xi_{s}^{j'}(d x)\bigr)}-\nonumber\\
&\hspace{7cm}-e^{-\sum_{j=1}^{J}\sum_{r=1}^{\beta_{\ell j}}\bigl(v_j(y_{r}^{',(j)})+\sum_{j'=1}^J\int_{\mathbb{R}^d}u_{j,j'}(y_r^{',(j)},x)\xi_{s}^{j'}(d x)\bigr)}\biggr)\biggr|\\
\leq& C\sup_{\substack{\bm{x} \in \mathbb{X}^{(\ell)}\\\vxi \in \otimes_{j=1}^J  \hat{M}_{F}(\mathbb{R}^{d}; C_{\circ})}}\biggl|\sum_{j=1}^J\sum_{r=1}^{\beta_{\ell j}}\biggl(\bigl(v_j(y_{r}^{(j)})-v_j(y_{r}^{',(j)})\bigr) + \sum_{j'=1}^J\int_{\mathbb{R}^d}\bigl(u_{j,j'}(y_r^{(j)},x)-u_{j,j'}(y_r^{',(j)},x)\bigr)\xi^{j'}(d x)\biggr)\biggr|\\
\leq& C \sum_{j=1}^J\sum_{r=1}^{\beta_{\ell j}}\bigl( |y_r^{(j)}-y_r^{',(j)}|\bigr)\\
\leq& C \norm{\bm{y}-\bm{y'}},
\end{align*}
for some constant $C<\infty$ that may be changing from line to line. The second inequality stems from the local Lipschitz continuity of exponential functions, the uniform bound on $\langle 1, \xi^j \rangle$, and the boundedness of the potentials implying global Lipschitz continuity. For the third inequality we used that the one-body potential $v_j$ is Lipschitz. We again used the boundedness of $\left\langle 1, \xi^{i}\right\rangle$ for all $1 \leq i \leq J$ and the assumed global boundedness of the two-body potential $u_{j,j'}$ in $C^1$ by Assumption \ref{anorm}.
\end{proof}

\begin{lemma}For any $\gamma > 0, \tilde{L}+1 \leq \ell \leq L$ and $\vxi \coloneqq (\xi^{1},\xi^{2},\cdots,\xi^{J}) \in \otimes_{j=1}^J \hat{M}_{F}(\mathbb{R}^{d}; C_{\circ})$, the acceptance probability $\pi_{\ell}^{\gamma}\bigl(\bm{y}|\bm{x}, \vxi(dx)\bigr)$ (given by either of \eqref{pduf}, \eqref{pdub} and \eqref{pd}) converges to $\pi_{\ell}\bigl(\bm{y}|\bm{x}, \vxi(dx)\bigr)$ (given by \eqref{pdmfl}) uniformly as $\gamma \rightarrow \infty$ with respect to any $\bm{y} \in \mathbb{Y}^{(\ell)}$ and $\bm{x} \in \mathbb{X}^{(\ell)}$.
Equivalently,
\begin{equation}
     \sup_{\bm{y} \in \mathbb{Y}^{(\ell)}, \bm{x} \in \mathbb{X}^{(\ell)}}\bigl|\pi_{\ell}\bigl(\bm{y}|\bm{x}, \vxi(dx)\bigr) -\pi_{\ell}^{\gamma}\bigl(\bm{y}|\bm{x}, \vxi(dx)\bigr)\bigr| \xrightarrow[\gamma \rightarrow \infty]{} 0.
\end{equation}
\end{lemma}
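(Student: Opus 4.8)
The plan is to exploit that all three forms \eqref{pduf}, \eqref{pdub}, \eqref{pd} of the pre-limit acceptance probability, as well as the common limit \eqref{pdmfl}, are of the shape $g(\Delta)$ with $g(t) := \min\{1, e^{-t}\}$ and $\Delta$ a difference of total potentials. The first observation is that $g$ is globally Lipschitz on $\mathbb{R}$ with constant $1$: for $t \le 0$ it is identically $1$, and for $t \ge 0$ it equals $e^{-t}$, whose derivative has modulus $\le 1$. (It is important to use this rather than the merely \emph{local} Lipschitz continuity of $e^{-t}$, since the exponents appearing here need not be uniformly bounded — the macroscopic interaction terms $\int u_{j,j'}(\cdot,x)\xi^{j'}(dx)$ can be as large as $C_{\circ}\norm{u}_{C^0(\mathbb{R}^{2d})}$.) Hence it suffices to show that, for each fixed reaction $\mathcal{R}_\ell$, the exponent $\Delta^\gamma$ of $\pi_\ell^\gamma$ differs from the exponent $\Delta$ of $\pi_\ell$ by $O(1/\gamma)$, uniformly over $\bm{x} \in \mathbb{X}^{(\ell)}$ and $\bm{y} \in \mathbb{Y}^{(\ell)}$; then
\[
\sup_{\bm{y} \in \mathbb{Y}^{(\ell)}, \bm{x} \in \mathbb{X}^{(\ell)}} \bigl| \pi_\ell\bigl(\bm{y}|\bm{x},\vxi(dx)\bigr) - \pi_\ell^\gamma\bigl(\bm{y}|\bm{x},\vxi(dx)\bigr) \bigr| \le \sup_{\bm{x},\bm{y}} \bigl| \Delta - \Delta^\gamma \bigr| \le \frac{C}{\gamma} \xrightarrow[\gamma\to\infty]{} 0 .
\]

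Next I would identify precisely which terms of $\Phi_\ell^{\pm,\gamma}$ survive the subtraction. Writing $\Phi_\ell^{-,\gamma}$ and $\Phi_\ell^{+,\gamma}$ as functionals of $\vxi$ (as in Remark \ref{rmk:potentialwithmeasures}, using the expansions of $U^\gamma(\bm{x};\bm{q})$ and $U^\gamma(\bm{y};\bm{q})$ in Section \ref{SS:AcceptanceProbNoeModel}), the background interaction $\Phi^\gamma(\bm{q})$ appears identically before and after the reaction and cancels; the one-body sums $V(\bm{x})$, $V(\bm{y})$ are $\gamma$-independent and coincide with the corresponding terms of $\Phi_\ell^{\pm}$; and the macroscopic two-body contributions $\sum_{j,j',r} \int u_{j,j'}(\cdot,x)\xi^{j'}(dx)$ are exactly the terms retained in $\Phi_\ell^{\pm}$ in \eqref{pdmfl}. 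Consequently $\Delta - \Delta^\gamma$ reduces to a sum of the explicitly $\tfrac1\gamma$-scaled pieces: the inter-substrate / inter-product potentials $U^\gamma(\bm{x})$, $U^\gamma(\bm{y})$ (in whichever of \eqref{pduf}, \eqref{pdub}, \eqref{pd} they are not already absent) together with the self-interaction corrections $\tfrac1\gamma u_{j,j'}(x^{(j)}_r, x^{(j')}_{r'})$ and $\tfrac1\gamma u_{j,j'}(y^{(j)}_r, x^{(j')}_{r'})$ buried inside $U^\gamma(\bm{x};\bm{q})$, $U^\gamma(\bm{y};\bm{q})$. By Assumption \ref{anorm} each such term is bounded in modulus by $\tfrac1\gamma \norm{u}_{C^0(\mathbb{R}^{2d})}$, and the number of them is at most a fixed constant depending only on $J$ and on the stoichiometric bounds $|\bm{\alpha}^{(\ell)}|, |\bm{\beta}^{(\ell)}| \le 2$; hence $|\Delta - \Delta^\gamma| \le C/\gamma$ with $C$ independent of $\bm{x}$, $\bm{y}$ — and in fact of $\vxi$, since every measure-dependent piece has cancelled.

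The three cases \eqref{pduf}, \eqref{pdub}, \eqref{pd} are treated verbatim; the only bookkeeping distinction is which of $U^\gamma(\bm{x})$ or $U^\gamma(\bm{y})$ has already been removed from the exponent, which does not affect the $O(1/\gamma)$ conclusion. I do not anticipate a genuine obstacle here: the argument is essentially a careful term-by-term accounting, and the only two points that require minor care are (i) invoking the global (rather than local) Lipschitz bound for $g = \min\{1, e^{-\cdot}\}$, and (ii) verifying that the macroscopic, $O(1)$ integral terms genuinely cancel between the pre- and post-reaction potentials, so that what remains is purely the microscopic $O(1/\gamma)$ correction.
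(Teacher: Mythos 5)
Your proof is correct and follows essentially the same route as the paper: both arguments hinge on the observation that, once the background term $\Phi^\gamma(\bm{q})$ cancels, the pre-limit and limiting exponents differ only by the finitely many explicitly $\tfrac{1}{\gamma}$-scaled pairwise terms (inter-substrate/product potentials and the self-interaction corrections inside $U^\gamma(\bm{x};\bm{q})$, $U^\gamma(\bm{y};\bm{q})$), which are uniformly $O(1/\gamma)$ by Assumption \ref{anorm}. The only cosmetic difference is that you apply the global $1$-Lipschitz bound for $t\mapsto\min\{1,e^{-t}\}$ directly to the exponents, giving a rate $C/\gamma$ uniform also in $\vxi$, whereas the paper uses $1$-Lipschitzness of $\min\{1,\cdot\}$, factors out the common bounded exponential (using the bounds on the potentials and on $\langle 1,\xi^j\rangle$), and lets the residual factor $e^{O(1/\gamma)}-1$ tend to zero.
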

\begin{proof} We only demonstrate the proof of convergence of \eqref{pdub} to \eqref{pdmfl} as the other two cases are the same. Recall Remark~\ref{rmk:potentialwithmeasures}, justifying why $\pi^{\gamma}_{\ell}$ is well-defined as a function of general finite measures. For $\bm{x} \in \mathbb{X}^{(1)}$ and $\bm{y} \in \mathbb{Y}^{(1)}$, factoring out the one-body potential terms and using their uniform boundedness we find
    \begin{align*}
\sup_{\bm{y} \in \mathbb{Y}^{(\ell)}, \bm{x} \in \mathbb{X}^{(\ell)}}&\bigl|\pi_{\ell}^{\gamma}\bigl(\bm{y}|\bm{x}, \vxi(dx)\bigr) -\pi_{\ell}\bigl(\bm{y}|\bm{x}, \vxi(dx)\bigr)\bigr|\\
\leq&C\displaystyle\sup_{\bm{y} \in \mathbb{Y}^{(\ell)}, \bm{x} \in \mathbb{X}^{(\ell)}}\biggl|e^{\sum_{j=1}^J\sum_{j'=1}^J \bigl(\sum_{r=1}^{\alpha_{\ell j}}\int_{\mathbb{R}^d}u_{j,j'}(x^{(j)}_{r},x)\xi^{j'}(d x)-\sum_{r=1}^{\beta_{\ell j}}\int_{\mathbb{R}^d}u_{j,j'}(y_r^{(j)},x)\xi^{j'}(d x)\bigr)}\\
\times& \biggl( e^{-\sum_{j=1}^{J}\sum_{r=1}^{\beta_{\ell j}}\bigl(\sum_{j'=j+1}^{J}\sum_{r'=1}^{\beta_{\ell j'}}\frac{1}{\gamma}u_{j,j'}(y^{(j)}_{r},y^{(j')}_{r'})+\sum_{r'=1}^{r-1}\frac{1}{\gamma}u_{j,j}(y^{(j)}_{r},y^{(j)}_{r'})\bigr)}\\
\times&e^{\sum_{j=1}^J\sum_{j'=1}^J\sum_{r'=1}^{\alpha_{\ell j'}}\bigl(\sum_{r=1}^{\beta_{\ell j}}\frac{1}{\gamma}u_{j,j'}(y_r^{(j)},x^{(j')}_{r'})-\sum_{r=1}^{\alpha_{\ell j}}\frac{1}{\gamma}u_{j,j'}(x^{(j)}_{r},x^{(j')}_{r'})\bigr)}-1\biggr)\biggr|\xrightarrow[\gamma \rightarrow \infty]{} 0.
    \end{align*}
The convergence result follows from the uniform boundedness of the two-body potentials and $\langle 1, \xi^j \rangle$.
\end{proof}

\subsection{Proof of Lemma 8.3.}\label{S:mollifierconvlemmaproof}
For any $\eta \geq 0$ small enough, $\tilde{L}+1 \leq \ell \leq L, \bm{y} \in \mathbb{Y}^{(\ell)}, \bm{x} \in \mathbb{X}^{(\ell)}$, and $f \in C_{b}^{2}(\mathbb{Y}^{(\ell)})$, there exists a constant $C$ such that
\begin{equation*}
\biggl|\int_{\mathbb{Y}^{(\ell)}} f(\bm{y})\pi_{\ell}\bigl(\bm{y}|\bm{x}, \vmu_{r-}^{\zeta}(dx')\bigr) \bigl(m_{\ell}^{\eta}(\bm{y} | \bm{x})-m_{\ell}(\bm{y} | \bm{x})\bigr) d \bm{y}\biggr| \leq C\|f\|_{C_{b}^{2}(\mathbb{Y}^{(\ell)})}{\eta}.
\end{equation*}

\begin{proof}
Case 1: Reaction of the form $S_{i} \rightarrow S_{j}$.
\begin{align*}
&\biggl|\int_{\mathbb{Y}^{(\ell)}} f(\bm{y})\pi_{\ell}\bigl(\bm{y}|\bm{x}, \vmu_{r-}^{\zeta}(dx')\bigr) \bigl(m_{\ell}^{\eta}(\bm{y} | \bm{x})-m_{\ell}(\bm{y} | \bm{x})\bigr) d \bm{y}\biggr|\\
=&\biggl|\int_{\mathbb{R}^{d}} f(y)\pi_{\ell}\bigl(y|x, \vmu_{r-}^{\zeta}(dx')\bigr)  G_{\eta}(y-x) d y-f(x)\pi_{\ell}\bigl(x|x, \vmu_{r-}^{\zeta}(dx')\bigr) \biggr| \\
=&\biggl|\int_{\mathbb{R}^{d}}\biggl(f(y)\pi_{\ell}\bigl(y|x, \vmu_{r-}^{\zeta}(dx')\bigr)-f(x)\pi_{\ell}\bigl(x|x, \vmu_{r-}^{\zeta}(dx')\bigr)\biggr) G_{\eta}(y-x) d y\biggr| \\
\leq&\biggl|\int_{\mathbb{R}^{d}}f(y)\biggl(\pi_{\ell}\bigl(y|x, \vmu_{r-}^{\zeta}(dx')\bigr)-\pi_{\ell}\bigl(x|x, \vmu_{r-}^{\zeta}(dx')\bigr)\biggr) G_{\eta}(y-x) d y\biggr|\\
&+\biggl|\int_{\mathbb{R}^{d}}\pi_{\ell}\bigl(x|x, \vmu_{r-}^{\zeta}(dx')\bigr)\bigl(f(y)-f(x)\bigr) G_{\eta}(y-x) d y\biggr|\\
\leq&\int_{B(x, \eta)}|f(y)||\pi_{\ell}\bigl(y|x, \vmu_{r-}^{\zeta}(dx')\bigr)-\pi_{\ell}\bigl(x|x, \vmu_{r-}^{\zeta}(dx')\bigr)|G_{\eta}(y-x) d y\\
&+\int_{B(x, \eta)}\pi_{\ell}\bigl(x|x, \vmu_{r-}^{\zeta}(dx')\bigr)| f(y)-f(x)|G_{\eta}(y-x) d y\\
\leq& C'\int_{B(x, \eta)}\|f\|_{C_{b}^{0}(\mathbb{R}^{d})} \eta \times G_{\eta}(y-x) d y+ \int_{B(x, \eta)}\|f\|_{C_{b}^{1}(\mathbb{R}^{d})} \eta \times G_{\eta}(y-x) d y \\
\leq&C\|f\|_{C_{b}^{1}(\mathbb{R}^{d})} \eta,
\end{align*}
where Assumption \ref{lippiiny} was used to derive the second to last inequality.

Case 2: Reaction of the form $S_{i} \rightarrow S_{j}+S_{k}$.
\begin{align*}
&\biggl|\int_{\mathbb{Y}^{(\ell)}} f(\bm{y})\pi_{\ell}\bigl(\bm{y}|\bm{x}, \vmu_{r-}^{\zeta}(dx')\bigr) \bigl(m_{\ell}^{\eta}(\bm{y} | \bm{x})-m_{\ell}(\bm{y} | \bm{x})\bigr) d \bm{y}\biggr|\\
=&\biggl| \sum_{i=1}^{I} p_{i} \times\biggl[\int_{\mathbb{R}^{2 d}} f\bigl(y_{1}, y_{2}\bigr) \rho\bigl(\left|y_{1}-y_{2}\right|\bigr) \pi_{\ell}\bigl(y_1,y_2|x, \vmu_{r-}^{\zeta}(dx')\bigr) G_{\eta}\bigl(x-\bigl(\alpha_{i} y_{1}+\bigl(1-\alpha_{i}\bigr) y_{2}\bigr)\bigr) d y_{1} d y_{2} \\
&-\int_{\mathbb{R}^{2 d}} f\bigl(y_{1}, y_{2}\bigr) \rho\bigl(\left|y_{1}-y_{2}\right|\bigr) \pi_{\ell}\bigl(y_1,y_2|x, \vmu_{r-}^{\zeta}(dx')\bigr) \delta\bigl(x-\bigl(\alpha_{i} y_{1}+\bigl(1-\alpha_{i}\bigr) y_{2}\bigr)\bigr) d y_{1} d y_{2}\biggr] \biggr| \\
=&\biggl| \sum_{i=1}^{I} p_{i} \times\biggl[\int_{\mathbb{R}^{2 d}} f(w+y_{2}, y_{2}) \pi_{\ell}\bigl(w+y_{2},y_2|x, \vmu_{r-}^{\zeta}(dx')\bigr)\rho(|w|) G_{\eta}(x-\alpha_{i} w-y_{2})) d w d y_{2} \\
&-\int_{\mathbb{R}^{2 d}} f(w+y_{2}, y_{2})\pi_{\ell}\bigl(w+y_{2},y_2|x, \vmu_{r-}^{\zeta}(dx')\bigr) \rho(|w|) \delta(x-\alpha_{i} w-y_{2})) d w d y_{2}\biggr] \biggr| \\
=&\biggl| \sum_{i=1}^{I} p_{i}\times\biggl[\int_{\mathbb{R}^{d}} \rho(|w|)\biggl(\int_{\mathbb{R}^{d}}\bigl(f(w+y_{2}, y_{2})\pi_{\ell}\bigl(w+y_{2},y_2|x, \vmu_{r-}^{\zeta}(dx')\bigr)\\
&-f(w+x-\alpha_{i} w, x-\alpha_{i} w)\bigr) \pi_{\ell}\bigl(w+x-\alpha_{i} w, x-\alpha_{i} w|x, \vmu_{r-}^{\zeta}(dx')\bigr)G_{\eta}(x-\alpha_{i} w-y_{2}) d y_{2}\biggr) d w\biggr] \biggr|\\
\leq& \sum_{i=1}^{I} p_{i} \times\biggl[\int_{\mathbb{R}^{d}} \rho(|w|)\biggl(\int_{B\bigl(x-\alpha_{i} w, \eta\bigr)}\bigl|f(w+y_{2}, y_{2})\pi_{\ell}\bigl(w+y_{2},y_2|x, \vmu_{r-}^{\zeta}(dx')\bigr)\\
&-f(w+x-\alpha_{i} w, x-\alpha_{i} w)\pi_{\ell}\bigl(w+x-\alpha_{i} w, x-\alpha_{i} w|x, \vmu_{r-}^{\zeta}(dx')\bigr) \bigr|G_{\eta}(x-\alpha_{i} w-y_{2}) d y_{2}\biggr) d w\biggr]\\
\leq& \sum_{i=1}^{I} p_{i} \times\biggl[\int_{\mathbb{R}^{d}} \rho(|w|)\biggl(\int_{B\bigl(x-\alpha_{i} w, \eta\bigr)}\biggl(\bigl|f(w+y_{2}, y_{2})\bigl(\pi_{\ell}\bigl(w+y_{2},y_2|x, \vmu_{r-}^{\zeta}(dx')\bigr)\\
&-\pi_{\ell}\bigl(w+x-\alpha_{i} w, x-\alpha_{i} w|x, \vmu_{r-}^{\zeta}(dx')\bigr)\bigr)\bigr|+\bigl|f(w+y_{2}, y_{2})-f(w+x-\alpha_{i} w, x-\alpha_{i} w)\bigr|\\
&\times \pi_{\ell}\bigl(w+x-\alpha_{i} w, x-\alpha_{i} w|x, \vmu_{r-}^{\zeta}(dx')\bigr)\biggr)G_{\eta}(x-\alpha_{i} w-y_{2}) d y_{2}\biggr) d w\biggr]\\
\leq& \sum_{i=1}^{I} p_{i} \times\biggl[\int_{\mathbb{R}^{d}} \rho(|w|)\biggl(C'\int_{B\bigl(x-\alpha_{i} w, \eta\bigr)}\|f\|_{C^{0}\bigl(\mathbb{R}^{2 d}\bigr)} \eta \times G_{\eta}(x-\alpha_{i} w-y_{2}) d y_{2}\\
&+\int_{B\bigl(x-\alpha_{i} w, \eta\bigr)}\|f\|_{C^{1}\bigl(\mathbb{R}^{2 d}\bigr)} \eta \times G_{\eta}(x-\alpha_{i} w-y_{2}) d y_{2}\biggr) d w\biggr] \\
\leq&C\|f\|_{C_{b}^{1}(\mathbb{R}^{d})} \eta,
\end{align*}
where Assumption \ref{lippiiny} was used to derive the second to last inequality.

Case 3: Reaction of the form $S_{i}+S_{k} \rightarrow S_{j}$.
\begin{align*}
&\biggl|\int_{\mathbb{Y}^{(\ell)}} f(\bm{y})\pi_{\ell}\bigl(\bm{y}|\bm{x}, \vmu_{r-}^{\zeta}(dx')\bigr) \bigl(m_{\ell}^{\eta}(\bm{y} | \bm{x})-m_{\ell}(\bm{y} | \bm{x})\bigr) d \bm{y}\biggr|\\
=&\biggl|\sum_{i=1}^{I} p_{i} \times\biggl[\int_{\mathbb{R}^{d}}\biggl(f(y) \pi_{\ell}\bigl(y|x_1, x_2, \vmu_{r-}^{\zeta}(dx')\bigr)-f\bigl(\alpha_{i} x_{1}+(1-\alpha_{i}) x_{2}\bigr)\times\\
&\times\pi_{\ell}\bigl(\alpha_{i} x_{1}+(1-\alpha_{i})x_2|x_1,x_2, \vmu_{r-}^{\zeta}(dx')\bigr)\biggr)G_{\eta}(y-\bigl(\alpha_{i} x_{1}+(1-\alpha_{i}) x_{2})\bigr) d y\biggr]\biggr| \\
\leq&C\|f\|_{C_{b}^{1}(\mathbb{R}^{d})} \eta.
\end{align*}

Case 4: Reaction of the form $S_{i}+S_{k} \rightarrow S_{j}+S_{r}$.
\begin{align*}
&\biggl|\int_{\mathbb{Y}^{(\ell)}} f(\bm{y})\pi_{\ell}\bigl(\bm{y}|\bm{x}, \vmu_{r-}^{\zeta}(dx')\bigr) \bigl(m_{\ell}^{\eta}(\bm{y} | \bm{x})-m_{\ell}(\bm{y} | \bm{x})\bigr) d \bm{y}\biggr|\\
=&\biggl| p \times\biggl[\int_{\mathbb{R}^{2 d}} f(y_{1}, y_{2}) \pi_{\ell}\bigl(y_1,y_2|x_1, x_2, \vmu_{r-}^{\zeta}(dx')\bigr)G_{\eta}(y_{1}-x_{1}) G_{\eta}(y_{2}-x_{2}) d y_{1} d y_{2}\\
&-f(x_{1}, x_{2})\pi_{\ell}\bigl(x_1,x_2|x_1, x_2, \vmu_{r-}^{\zeta}(dx')\bigr)\biggr]\\
+&(1-p)\times\biggl[\int_{\mathbb{R}^{2 d}} f(y_{1}, y_{2}) \pi_{\ell}\bigl(y_1, y_2|x_1, x_2, \vmu_{r-}^{\zeta}(dx')\bigr)G_{\eta}(y_{2}-x_{1}) G_{\eta}(y_{1}-x_{2}) d y_{1} d y_{2}\\
&-f(x_{2}, x_{1})\pi_{\ell}\bigl(x_2, x_1|x_1, x_2, \vmu_{r-}^{\zeta}(dx')\bigr)\biggr]\biggr|\\
\leq &p \times \int_{B\left(\left(x_{1}, x_{2}\right), \sqrt{2} \eta\right)}\bigl| f\left(y_{1}, y_{2}\right)\pi_{\ell}\bigl(y_1, y_2|x_1, x_2, \vmu_{r-}^{\zeta}(dx')\bigr)\\
&-f\left(x_{1}, x_{2}\right)\pi_{\ell}\bigl(x_1, x_2|x_1, x_2, \vmu_{r-}^{\zeta}(dx')\bigr) \bigr| G_{\eta}\left(y_{1}-x_{1}\right) G_{\eta}\left(y_{2}-x_{2}\right) d y_{1} d y_{2} \\
&+(1-p) \times \int_{B\left(\left(x_{2}, x_{1}\right), \sqrt{2} \eta\right)}\bigl|f\left(y_{1}, y_{2}\right)\pi_{\ell}\bigl(y_{1}, y_{2}|x_1, x_2, \vmu_{r-}^{\zeta}(dx')\bigr)\\
&-f\left(x_{2}, x_{1}\right)\pi_{\ell}\bigl(x_2, x_1|x_1, x_2, \vmu_{r-}^{\zeta}(dx')\bigr)\bigr| G_{\eta}\left(y_{2}-x_{1}\right) G_{\eta}\left(y_{1}-x_{2}\right) d y_{1} d y_{2}  \\
\leq &  C'p \times \int_{B\left(\left(x_{1}, x_{2}\right), \sqrt{2} \eta\right)}\|f\|_{C_{b}^{0}\left(\mathbb{R}^{2 d}\right)} \times \sqrt{2} \eta \times G_{\eta}\left(y_{1}-x_{1}\right) G_{\eta}\left(y_{2}-x_{2}\right) d y_{1} d y_{2} \\
&+C'(1-p) \times \int_{B\left(\left(x_{2}, x_{1}\right), \sqrt{2} \eta\right)}\|f\|_{C_{b}^{0}\left(\mathbb{R}^{2 d}\right)} \times \sqrt{2} \eta \times G_{\eta}\left(y_{2}-x_{1}\right) G_{\eta}\left(y_{1}-x_{2}\right) d y_{1} d y_{2}\\
&+p \times \int_{B\left(\left(x_{1}, x_{2}\right), \sqrt{2} \eta\right)}\|f\|_{C_{b}^{1}\left(\mathbb{R}^{2 d}\right)} \times \sqrt{2} \eta \times G_{\eta}\left(y_{1}-x_{1}\right) G_{\eta}\left(y_{2}-x_{2}\right) d y_{1} d y_{2} \\
&+(1-p) \times \int_{B\left(\left(x_{2}, x_{1}\right), \sqrt{2} \eta\right)}\|f\|_{C_{b}^{1}\left(\mathbb{R}^{2 d}\right)} \times \sqrt{2} \eta \times G_{\eta}\left(y_{2}-x_{1}\right) G_{\eta}\left(y_{1}-x_{2}\right) d y_{1} d y_{2}\\
\leq & C\|f\|_{C_{b}^{1}\left(\mathbb{R}^{2 d}\right)} \eta,
\end{align*}
where Assumption \ref{lippiiny} was used to derive the second to last inequality.
\end{proof}

\printbibliography

\end{document}